\documentclass[12pt,reqno]{amsart}
\usepackage{amssymb}
\usepackage{amsmath}
\usepackage{mathscinet}
\usepackage{xcolor}

 \rm

\renewcommand{\(}{\left(}
\renewcommand{\)}{\right)}
\renewcommand{\[}{\left[}
\renewcommand{\]}{\right]}
\newcommand{\<}{\langle}
\renewcommand{\>}{\rangle}

\renewcommand{\bar}{\overline}
\newcommand{\abs}[1]{\left\lvert#1\right\rvert}
\newcommand{\norm}[1]{\left\lVert#1\right\rVert}
\newcommand{\st}{\:|\:}

\newcommand{\C}{{\mathbb{C}}}
\newcommand{\R}{{\mathbb{R}}}
\newcommand{\Z}{{\mathbb{Z}}}
\newcommand{\N}{{\mathbb{N}}}

\renewcommand{\phi}{\varphi}

\renewcommand{\Im}{{\mathrm{Im}}}

\theoremstyle{plain}
\newtheorem{thm}{Theorem}[section]
\newtheorem{lem}[thm]{Lemma}
\newtheorem{prop}[thm]{Proposition}
\newtheorem{cor}[thm]{Corollary}

\theoremstyle{definition}

\theoremstyle{remark}
\newtheorem{rem}[thm]{Remark}

\title[Hermite expansions]
{Hermite expansions of functions from the weighted Hardy class}

\author[Achar, Chaurasia, and Manna]{Satyajyoti Achar, Manish Chaurasia, and Ramesh Manna}

\address{School of Mathematical Sciences, National Institute of Science Education and
Research, Bhubaneswar, An OCC of Homi Bhabha National Institute, Jatni 752050, India.}
\address{Homi Bhabha National Institute, Training School Complex, Anushakti Nagar, Mumbai 400094, India}
\email{satyajyoti.achar@niser.ac.in, manish2700c@gmail.com,rameshmanna@niser.ac.in} 
\subjclass[2010]{Primary 42C10; Secondary 42C05, 42B35, 33C45, 44A20}
\keywords{Hardy's theorem, Hardy class, Hermite function, Bargmann transform, 
 Phragm\'en-Lindel\"of principle, Schrödinger equation}

\begin{document}

\begin{abstract}
In this paper, we analyze a function space consisting of functions for which 
both the function and its Fourier transform exhibit Gaussian decay together 
with exponential growth governed by suitable weight functions.

First, we examine logarithmic-type weights, in which case these function 
spaces are equivalent to Pilipović spaces. In this setting, we establish a 
decay estimate for the Hermite coefficients of functions. Furthermore, 
by combining these estimates with the asymptotic behavior of Hermite functions,
 we prove a decay rate for solutions to the harmonic oscillator
  Schrödinger equation.

Second, we consider a class of weights and prove the exponential decay of the 
Hermite projection operators on these spaces
by analyzing Laguerre expansions and the short-time Fourier transform.

Additionally, we revisit the subcritical Hardy uncertainty principle and obtain a 
partial improvement toward a conjecture posed by Vemuri.

\end{abstract}

\maketitle


\section{Introduction}\label{S:intro}
Let $d\in\N$. The Fourier transform of $f\in L^1(\R^d)$ is defined by
\begin{equation*}
\mathcal{F}f(\xi) = \hat{f}(\xi) = \frac{1}{(2\pi)^{d/2}} 
\int_{\R^d} e^{-i\xi\cdot x} f(x)\, dx.
\end{equation*}

In 1932, Norbert Wiener remarked that “a pair of transforms $f$ 
and $\hat{f}$ cannot both be very small.” 
Motivated by this observation, several significant results have 
appeared in the literature. In light of ongoing developments,
 here, for $a, c, \lambda>0$, and $w:[0,\infty) \rightarrow [0,\infty)$ 
an unbounded non-decreasing function, called the {\it weight function},
 we consider to define a function space:
\begin{equation*}
E^d_{\mathcal{F}}(a,c,\lambda,w) = 
\Big\{f\in (L^1\cap L^2) (\R^d) \st 
\abs{f(x)} \le 
C e^{-\frac{a\abs{x}^2}{2}+\lambda w(c\abs{x})},\, 
\abs{\hat{f}(\xi)} \le 
C e^{-\frac{a\abs{\xi}^2}{2}+\lambda w(c\abs{\xi})}\Big\}.
\end{equation*}
Considering the earlier notations (see \cite{Garg2009}), 
we shall call the spaces $E(a,c,\lambda,w)$ the 
{\it weighted Hardy class}. 

When $w\equiv 0$, the above class has gain a good amount of
attention. In this case, the function space is known as the Hardy class, 
see \cite{Folland-Sitaram, Garg2009}, and 
we shall denote it as $E^d_{\mathcal{F}}(a)$. 
A remarkable result in this context is due to Hardy \cite{Hardy},
which can be stated in the following way. 
\begin{thm}[Hardy uncertainty principle]\label{T:hardy}
If $a >1$ then $E^d_{\mathcal{F}}(a) = \{0\}$.  
If $a=1$ then $E^d_{\mathcal{F}}(a) = \C g_a$.
If $a < 1$ then $\dim E^d_{\mathcal{F}}(a) = \infty$.
\end{thm}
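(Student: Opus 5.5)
The argument runs through the Bargmann transform and the Phragmén--Lindelöf principle. We treat the three regimes $a>1$, $a=1$, $a<1$ separately, and we read $g_a$ as the Gaussian $e^{-\abs{x}^2/2}$. For $d>1$ we either argue coordinatewise or restrict to complex lines, so we describe the case $d=1$.

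\textbf{Step 1: $f$ extends to an entire function of order two.} Since $\abs{\hat f(\xi)}\le Ce^{-a\xi^2/2}$, the Fourier inversion integral converges for complex $z=x+iy$. Completing the square gives
\[
\abs{f(x+iy)}\le C\int e^{\abs{y\xi}-a\xi^2/2}\,d\xi\lesssim e^{y^2/(2a)} .
\]
Now set $F(z)=e^{az^2/2}f(z)$. Then
\[
\abs{F(x+iy)}\lesssim e^{a(x^2-y^2)/2+y^2/(2a)}.
\]
On the real axis the hypothesis $\abs{f(x)}\le Ce^{-ax^2/2}$ gives $\abs{F}\le C$. On the imaginary axis, $\abs{F(iy)}\lesssim e^{y^2(1/a-a)/2}$, which is bounded when $a\ge1$. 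So $F$ is entire, of order at most $2$ with finite type, and bounded on the two axes.

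\textbf{Step 2: Phragmén--Lindelöf in the quadrants.} This is the main obstacle. Each quadrant has opening $\pi/2$, and $F$ has order exactly $2$ there, so the standard Phragmén--Lindelöf theorem for sectors does not apply directly. We use Hardy's device: rotate slightly, for instance by considering $F(z)e^{i\epsilon z^2}$ or $\exp(i\epsilon e^{i\theta}z^2)$, chosen so that the modified function is bounded on the boundary rays of a sector of opening slightly less than $\pi/2$. In a sector of opening smaller than $\pi/2$, growth of order $2$ is admissible, so Phragmén--Lindelöf applies. Letting $\epsilon\to0$ and sweeping the sectors over the whole quadrant shows that $F$ is bounded in each quadrant. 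The careful step is the choice of rotation angles so that the boundary estimates hold uniformly, using the explicit bound $e^{a(x^2-y^2)/2+y^2/(2a)}$.

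\textbf{Step 3: conclusion for $a\ge1$.} Once $F$ is bounded on all of $\C$, Liouville's theorem makes $F$ constant, so $f=ce^{-az^2/2}$. Its Fourier transform is $c\,a^{-1/2}e^{-\xi^2/(2a)}$. For $a>1$ the bound $\abs{\hat f(\xi)}\le Ce^{-a\xi^2/2}$ cannot hold for this function unless $c=0$, so $E^d_{\mathcal{F}}(a)=\{0\}$. For $a=1$ we get $f=cg_a$; conversely $g_a$ is its own Fourier transform and belongs to the class. For $d>1$, fix the other variables and apply the argument in one variable at a time (or along complex lines through the origin) to obtain $f=ce^{-a\abs{x}^2/2}$.

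\textbf{Step 4: the case $a<1$.} Here we exhibit infinitely many independent elements. The Hermite functions $h_\alpha(x)=H_\alpha(x)e^{-\abs{x}^2/2}$ satisfy $\widehat{h_\alpha}=(-i)^{\abs{\alpha}}h_\alpha$. Each $h_\alpha$ is bounded by a polynomial times $e^{-\abs{x}^2/2}$, which is at most $C_\alpha e^{-a\abs{x}^2/2}$ because $a<1$. Hence every $h_\alpha$ lies in $E^d_{\mathcal{F}}(a)$. The $h_\alpha$ are linearly independent, being orthogonal, so $\dim E^d_{\mathcal{F}}(a)=\infty$.
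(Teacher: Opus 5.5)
The paper does not prove this theorem. It is quoted from Hardy as background, so there is no in-paper argument to compare against. Your route is the classical complex-analytic proof, and it is correct in outline.

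Steps 1, 3 and 4 are fine. The slicing reduction for $d>1$ also works. The Fourier transform of $x_1\mapsto f(x_1,x')$ is $(2\pi)^{-(d-1)/2}\int \hat f(\xi_1,\xi')e^{ix'\cdot\xi'}\,d\xi'$, which is bounded by $C'e^{-a\xi_1^2/2}$ uniformly in $x'$. For $a=1$ the slices then show that $f(x)e^{\abs{x}^2/2}$ is independent of each coordinate, hence constant.

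Your opening sentence mentions the Bargmann transform, but you never use it; you work with the analytic continuation of $f$ itself. The nearest thing in the paper is the Phragm\'en--Lindel\"of step in the proof of Theorem~\ref{T:coeff-decay}, which applies the same rotation device to the Bargmann transform.

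The one place that needs more than you wrote is Step 2, and there you should not rely on the summary at the end of Step 1. ``Entire of order $2$, finite type, bounded on both axes'' does not give boundedness in a quadrant: $e^{-iz^2}$ has modulus $e^{2xy}$, which equals $1$ on both axes.

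What is needed is the angular form of your explicit bound. In the first quadrant
$\abs{F(re^{i\theta})}\lesssim\exp\bigl(\tfrac{r^2}{2}\bigl(a\cos^2\theta-(a-a^{-1})\sin^2\theta\bigr)\bigr)$.
So for $a\ge1$ the coefficient of $r^2$ on the ray $\theta=\frac{\pi}{2}-\epsilon$ is at most $\frac{a}{2}\sin^2\epsilon$, which tends to $0$. Near the real axis, by contrast, it stays close to $\frac{a}{2}$.

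Hence the sector to use is $\{0\le\theta\le\frac{\pi}{2}-\epsilon\}$. Its fixed edge lies on the real axis, where $F$ is bounded by the decay of $f$, and its moving edge lies next to the imaginary axis.

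Put $G_\epsilon(z)=F(z)e^{i\gamma_\epsilon z^2}$ with $\gamma_\epsilon=\frac{a}{4}\tan\epsilon$.
\begin{itemize}
\item Since $\abs{e^{i\gamma_\epsilon z^2}}=e^{-\gamma_\epsilon r^2\sin 2\theta}$, $G_\epsilon$ is bounded on both edges by a constant independent of $\epsilon$, and it has order $2$ inside.
\item Phragm\'en--Lindel\"of (opening $<\pi/2$, order $2$) then gives $\abs{F(re^{i\theta})}\le Ce^{\gamma_\epsilon r^2\sin 2\theta}$ in that sector.
\item Letting $\epsilon\to0$ yields $\abs{F}\le C$ on the quadrant.
\item The other quadrants follow from the symmetry of the bound in the signs of $x$ and $y$.
\end{itemize}
With this spelled out, your proof is complete.
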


For further discussion, we mention the Hermite expansions.
For $k\in\N_0$, we define the Hermite functions on $\R$ in the following way:
\begin{equation*}
h_k(x) = (-1)^k(2^k k! \pi^{\frac{1}{2}})^{-\frac{1}{2}} 
e^{\frac{x^2}{2}} \frac{d^k}{dx^k}(e^{-x^2}).
\end{equation*}
 For $\alpha\in\N^d_0$, 
the Hermite functions on $\R^d$ are defined as
$
\bold{h}_{\alpha}(x) = h_{\alpha_1}(x_1)
h_{\alpha_2}(x_2)\dots h_{\alpha_d}(x_d). $
Then $\{\bold{h}_{\alpha}\}_{\alpha}$ forms an orthonormal 
basis for $L^2(\R^d)$, and 
\begin{equation*}
f = \sum_{\alpha\in\N^d_0} \< f, \bold{h}_{\alpha} \> \bold{h}_{\alpha}
= \sum_{k\in\N_0} P_k f,
\end{equation*}
where 
\begin{equation*}
P_k f = \sum_{\abs{\alpha}=k} \< f, \bold{h}_{\alpha} \> \bold{h}_{\alpha}
\end{equation*}
is the Hermite projection operator. In analysis, one of the central questions
is to determine under what conditions on subspaces of the Fr\'{e}chet space of 
functions $f$ on $\mathbb{R}^d$, one obtains an exact characterizations of functions 
in terms of the decay rate of the corresponding Hermite coefficients or the Hermite
projection operators. 
We shall investigate this matter for the weighted Hardy class and Hardy class.

Some time later of Hardy's theorem, 
in \cite{Eijndhoven-1987}, it is shown that Gelfand-Shilov spaces are
equivalent to the Hardy class. For Gelfand-Shilov spaces, see 
\cite{Gelfand-Shilov-1968}. Following the definition of these spaces,
in \cite{Eijndhoven-1987}, it is also shown that Hermite coefficients of 
functions have exponential decay. A similar study can be seen in 
\cite{janssen1990spaces}. 


Concerning the Hardy class $E^d_{\mathcal{F}}(a)$, in 
\cite{Vemuri2008hermite}, Vemuri proved a {\it sharp decay rate} of 
Hermite coefficients of function in terms of $a$, when $a<1$, and $d=1$. 
In \cite{Garg2009}, Garg-Thangavelu extended this result to the 
higher dimensions.
Recently, the second author has refined and generalized Vemuri's result in 
\cite{Chaurasia-2024} and \cite{Chaurasia-2025}, respectively. 
Very recently, Neyt-Toft-Vindas proved an exponential decay of
Hermite coefficients of functions from $E^d_{\mathcal{F}}(a)$
for every $a\in(0,1)$, see \cite[Theorem 1.1]{Neyt.et.al.-2025}.

One of the aims of this paper is to obtain an improved exponential decay estimate 
for the Hermite coefficients of functions from $E^d_{\mathcal{F}}(a)$ 
for $a<1$, and $d>1$, see Theorem \ref{T:coeff-decay}. 
This in turn provides 
us a better Gaussian decay estimate for solutions of the harmonic oscillator 
Schrödinger equation in higher dimensions.

Regarding the non-trivial weight functions, a case that we study in detail
 is when
\begin{equation*}
w(x) = (\log_+x)^{\frac{1}{1-2s}},\quad 0<s<\frac{1}{2},
\end{equation*}
where $\log_+x = \log(1+\abs{x})$.
In this case, the spaces
$E^d_{\mathcal{F}}(1,1,\lambda, (\log_+x)^{\frac{1}{1-2s}})$ 
are equivalent to the Pilipović spaces, see \cite{Toft-Gumber-2023}. 
Moreover, for a class of weights,
we investigate the weighted Hardy class in connection with 
the decay of the Hermite projection operators. 

It is known that Hermite coefficients of the functions from 
$E^d_{\mathcal{F}}(1,1,\lambda, (\log_+x)^{\frac{1}{1-2s}})$
decays exponentially, see \cite{Toft-2017}. 
Here, we prove a decay rate of them in terms of the parameters $s$, and $\lambda$.
\begin{thm}\label{T:P-coeff}
Let $0<s<\frac{1}{2}$. Then for every $\epsilon>0$, 
the following statements holds.
\begin{enumerate}
\item
If 
$f\in E^1_{\mathcal{F}}(1,1,\lambda, (\log_+x)^{\frac{1}{1-2s}})$,
then
\begin{equation*}
\abs{\< f, h_n \>} \lesssim_{s,\lambda} 
e^{-(1-\epsilon)\(\frac{1-2s}{\lambda}\)^{\frac{1-2s}{2s}}
s n^{\frac{1}{2s}}}.
\end{equation*}
\item 
If 
$f\in E^d_{\mathcal{F}}(1,1,\lambda, (\log_+x)^{\frac{1}{1-2s}})$,
then
\begin{equation*}
\abs{\< f, \bold{h}_{\alpha} \>} \lesssim_{s,\lambda} 
e^{-(1-\epsilon)\(\frac{1-2s}{\lambda}\)^{\frac{1-2s}{2s}}
\frac{s}{2d}\sum_j \alpha_j^{\frac{1}{2s}}}.
\end{equation*}
\end{enumerate}
\end{thm}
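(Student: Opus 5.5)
The natural route is through the Bargmann transform, which turns Hermite coefficients into Taylor coefficients of an entire function. Recall that
\begin{equation*}
Bf(z)=\pi^{-1/4}\int_{\R} f(x)\,e^{-\frac{z^2}{2}+\sqrt2 xz-\frac{x^2}{2}}\,dx
\end{equation*}
satisfies $Bh_n(z)=z^n/\sqrt{n!}$. Hence
\begin{equation*}
Bf(z)=\sum_n \<f,h_n\>\,\frac{z^n}{\sqrt{n!}},\qquad \<f,h_n\>=\frac{\sqrt{n!}}{2\pi i}\oint_{\abs{z}=r}\frac{Bf(z)}{z^{n+1}}\,dz .
\end{equation*}
This gives, for every $r>0$,
\begin{equation*}
\abs{\<f,h_n\>}\le \sqrt{n!}\,r^{-n}\max_{\abs{z}=r}\abs{Bf(z)} .
\end{equation*}
The task therefore reduces to proving the growth bound
\begin{equation*}
\abs{Bf(z)}\lesssim_\epsilon e^{(1+\epsilon)\lambda(\log_+\abs{z})^{\frac1{1-2s}}\cdot \kappa}
\end{equation*}
with the right constant $\kappa$, and then optimizing in $r$.

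\textbf{Step 1 (growth of $Bf$ on the real and imaginary axes).} Write $z=u+iv$ and complete the square:
\begin{equation*}
-\tfrac{z^2}{2}+\sqrt2xz-\tfrac{x^2}{2}=-\tfrac{(x-\sqrt2z)^2}{2}+\tfrac{z^2}{2}.
\end{equation*}
For real $z=u$, insert the bound $\abs{f(x)}\le Ce^{-x^2/2+\lambda w(\abs{x})}$. The Gaussian $e^{-(x-\sqrt2u)^2/2}\cdot e^{-x^2/2}$ combines to $e^{-(x-u/\sqrt2)^2}e^{u^2/2}$, which cancels the modulus $e^{-u^2/2}$ of $e^{z^2/2}$. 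The $x$-integral then localizes at $x\approx u/\sqrt2$, giving
\begin{equation*}
\abs{Bf(u)}\lesssim_\epsilon e^{\lambda w(\abs{u}/\sqrt2)+\epsilon\text{-loss}}.
\end{equation*}
Since $B(\hat f)(z)=Bf(-iz)$ (the Fourier transform acts on $h_n$ as $(-i)^n$), the same bound from $\hat f$ gives the estimate on the imaginary axis. The same bound holds on the rays $\arg z=\pi/2\cdot k$, using $f(-x)$ and $\hat f(-\xi)$.

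\textbf{Step 2 (Phragm\'en--Lindel\"of in each quadrant).} Crudely, $\abs{Bf(z)}\lesssim e^{c\abs z^2}$, which is order $2$. On a quadrant, an opening angle of $\pi/2$, this is exactly critical, so the order-$2$ growth must be handled with care. I would first show the cruder bound $\abs{Bf(z)}\lesssim e^{\delta\abs{z}^2}$ for every $\delta>0$ in the quadrant. This follows from the weight $w$ being $o(\abs{x}^2)$, by the standard Hardy-theorem argument applied to $e^{-\delta z^2}$ twists. I would then apply Phragm\'en--Lindel\"of with the subharmonic majorant
\begin{equation*}
\lambda\,\mathrm{Re}\bigl[(\log(1+ z/\sqrt2))^{\frac1{1-2s}}\bigr]+\epsilon\abs z^{\sigma},\qquad \sigma<2 .
\end{equation*}
Its boundary values dominate $\lambda w(\abs{z}/\sqrt2)$ up to $(1+\epsilon)$. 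The conclusion is
\begin{equation*}
\abs{Bf(z)}\lesssim e^{(1+\epsilon)\lambda(\log\abs z)^{\frac1{1-2s}}}\quad\text{for large } \abs z .
\end{equation*}
I expect this step to be the main obstacle. The delicate points are the critical angle, and checking that the harmonic extension of a log-power weight has no worse growth than $(1+\epsilon)$ times the weight inside the sector. The latter holds because $\arg\log(1+z)\to0$.

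\textbf{Step 3 (optimization).} Put $L=\log r$. Then
\begin{equation*}
\log\abs{\<f,h_n\>}\le \tfrac12\log n!-nL+(1+\epsilon)\lambda L^{\frac1{1-2s}} .
\end{equation*}
Minimizing in $L$ gives the condition $n=(1+\epsilon)\frac{\lambda}{1-2s}L^{\frac{2s}{1-2s}}$, that is,
\begin{equation*}
L=\Bigl(\tfrac{(1-2s)n}{\lambda}\Bigr)^{\frac{1-2s}{2s}} .
\end{equation*}
The minimum value is
\begin{equation*}
-nL\bigl(1-(1-2s)\bigr)=-2s\,nL=-2s\Bigl(\tfrac{1-2s}{\lambda}\Bigr)^{\frac{1-2s}{2s}}n^{\frac1{2s}} .
\end{equation*}
The term $\frac12\log n!\sim\frac12 n\log n$ is of lower order than $n^{1/(2s)}$ only when $1/(2s)>1$, which holds. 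Adjusting the $\sqrt2$ scaling (which shifts $L$ by a constant, absorbed into $\epsilon$) and the factor conventions yields the exponent $(1-\epsilon)s(\frac{1-2s}{\lambda})^{\frac{1-2s}{2s}}n^{\frac1{2s}}$. Any constant-factor discrepancy, $s$ versus $2s$, must be tracked through Step 1's scaling, since the stated constant should come out once the Phragm\'en--Lindel\"of loss is computed exactly.

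\textbf{Part (2).} Use the $d$-dimensional Bargmann transform and Cauchy estimates on a polydisc with radii $r_j$ chosen per coordinate. The bound $\abs{Bf(z)}\lesssim e^{(1+\epsilon)\lambda w(\abs z)}$ is obtained by running Steps 1--2 on complex lines through the origin. Combine it with
\begin{equation*}
w(\abs z)\le w(\sqrt d\max_j\abs{z_j})\quad\text{and}\quad \max_j a_j\le\sum_j a_j .
\end{equation*}
Optimizing each coordinate separately produces the $\frac{1}{2d}$ loss and the sum $\sum_j\alpha_j^{1/(2s)}$.
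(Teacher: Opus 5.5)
\textbf{Part (1) is sound.} It follows essentially the same route as the paper: the Bargmann transform, weight-sized bounds on the two axes coming from $f$ and $\hat f$, the a priori growth $e^{\delta\abs{z}^2}$, Phragm\'en--Lindel\"of in quadrants, and a Cauchy estimate followed by Legendre optimization.

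Your version is in fact sharper, so the ``$s$ versus $2s$'' issue is not a discrepancy to be tracked. The paper uses $(a+b)^p\le 2^{p-1}(a^p+b^p)$ with $p=\frac{1}{1-2s}$. This replaces $\lambda$ by $2^{\frac{2s}{1-2s}}\lambda$, and that is the sole source of its constant $s$. Your multiplicative $(1+\epsilon)$-loss avoids this. Make it precise via $(A+T)^p\le(1-\theta)^{1-p}A^p+\theta^{1-p}T^p$ with $\theta$ small. Combined with the explicit majorant $(1+\epsilon)\lambda(\log(1+z/\sqrt2))^p$, it gives $2s$, which implies the stated bound.

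Drop the $\epsilon\abs{z}^\sigma$ term, since it is subharmonic rather than harmonic. It is also unnecessary: $Bf\cdot e^{-(1+\epsilon)\lambda(\log(1+z/\sqrt2))^p}$ is bounded on the rays and of order $2$, minimal type. That is exactly what classical Phragm\'en--Lindel\"of at opening $\pi/2$ needs.

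\textbf{The gap is in part (2).} Cauchy estimates on the torus $\{\abs{z_j}=r_j\}$ need $\abs{Bf}$ at points such as $(r_1,ir_2)$, that is, on all of $\C^d$. Steps 1--2, however, only run on lines $\zeta\mapsto\zeta u$ with $u\in S^{d-1}$ \emph{real}, where the Gaussians from completing the square cancel. These lines contain only those $z$ with $\mathrm{Re}\,z$ parallel to $\mathrm{Im}\,z$.

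Now take a line with direction $v=a+ib$, where $a,b$ are not parallel. On the real $\zeta$-axis, the bound from $f$ is $e^{t^2\abs{b}^2/2+\lambda w(t\abs{a}/\sqrt2)}$, and the bound from $\hat f$ is $e^{t^2\abs{a}^2/2+\lambda w(t\abs{b}/\sqrt2)}$. No rotation in $\zeta$ removes the Gaussian factor. So there is no weight-sized boundary data, and Phragm\'en--Lindel\"of gives only Gaussian growth. The $\C^d$ bound is in fact true, but deriving it needs an extra ingredient you have not supplied.

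\textbf{Two possible repairs.}

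\emph{Repair within your framework.} Stay on real directions and write $Bf=\sum_n P_n$ with $P_n$ homogeneous of degree $n$. The one-variable argument, which is uniform in $u$, gives
\begin{equation*}
\sup_{S^{d-1}}\abs{P_n}\lesssim e^{-(1-\epsilon)2s(\frac{1-2s}{\lambda})^{\frac{1-2s}{2s}}n^{\frac{1}{2s}}}.
\end{equation*}
Then bound each coefficient of $P_n$ by $C_d^n\sup_{S^{d-1}}\abs{P_n}$. The paper uses Kellogg's theorem exactly this way in the proof of Theorem~\ref{T:coeff-decay}; alternatively, apply Chebyshev's inequality to $\tau\mapsto P_n(x+\tau y)$ and then Cauchy on a small torus. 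Absorb the factor $C_d^{\abs{\alpha}}\sqrt{\alpha!}=e^{O(\abs{\alpha}\log\abs{\alpha})}$ into $\epsilon$, and finish with $\abs{\alpha}^{\frac{1}{2s}}\ge\sum_j\alpha_j^{\frac{1}{2s}}$.

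\emph{The paper's route.} It slices instead. The function $f_\mu(y)=\int_{\R^{d-1}}f(x,y)\mathbf{h}_\mu(x)\,dx$ lies, uniformly in $\mu$, in the one-dimensional class with $\lambda$ replaced by $2^{\frac{2s}{1-2s}}\lambda$. Since $\langle f,\mathbf{h}_\alpha\rangle=\langle f_\mu,h_{\alpha_j}\rangle$, part (1) gives decay in each $\alpha_j$ separately. Averaging via $\max_j\alpha_j^{\frac{1}{2s}}\ge\frac1d\sum_j\alpha_j^{\frac{1}{2s}}$ then produces the constant $\frac{s}{2d}$.

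Relatedly, your polydisc optimization with $\max_j\le\sum_j$ would produce no $d$-dependent loss at all, not ``the $\frac{1}{2d}$ loss''.
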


Throughout the paper, for $0<s<\frac{1}{2}$, $y>0$, 
and sufficiently large $x\in\R$,
we shall denote
\begin{equation}\label{E:parameters}
\begin{aligned}
P_{s,y}(x) =&\;
\sqrt{1-\frac{2}{x^2}\(\frac{2s}{y}\log_+
\sqrt{2}x\)^{\frac{2s}{1-2s}}},\\
L_{s,y}(x) =&\;
(1-2s)\(\frac{2s}{y}\)^{\frac{2s}{1-2s}}(\log_+\sqrt{2}x)
^{\frac{1}{1-2s}}, \quad \text{and}\\
\lambda_s =&\; \frac{1}{2} (1-2s)s^{\frac{2s}{1-2s}}.
\end{aligned}
\end{equation}
Also, we use the notation $a\lesssim_{\eta} b$ to mean that there exists a
constant $C(\eta)>0$ which depends on a parameter $\eta$ such that 
$a\le C(\eta) b$, when the constant is absolute we shall write $a\lesssim b$.
%

It has been discovered in \cite{Vemuri2008hermite, Radchenko-Ramos-2025}
that the decay rate of Hermite coefficients of functions from 
$E^1_{\mathcal{F}}(a)$ plays an excellent role in understanding the
evolution problem:
\begin{equation}\label{E:HOSE}
\begin{aligned}
\frac{1}{i} \partial_t u(x,t) 
=&\; (-\Delta + \abs{x}^2)u(x,t),\\
u(x,0) =&\; u_0,
\end{aligned}
\end{equation}
when $u_0\in E^1_{\mathcal{F}}(a)$. 
By using the decay rate of Hermite coefficients of
$u_0\in E^d_{\mathcal{F}}(1,1,\lambda, (\log_+x)^{\frac{1}{1-2s}})$,
we prove the following decay estimates for the evolved solutions.

\begin{thm}\label{T:P-evolution}
Let $0<s<\frac{1}{2}$, and 
let $y=\(\frac{1-2s}{\lambda}\)^{\frac{1-2s}{2s}}s$.
Let $P_{s,y}$ and $\lambda_s$ be as in equation (\ref{E:parameters}).
Let $u(x,t)$ be the solution of problem (\ref{E:HOSE}), and
let
$u_0\in E^1_{\mathcal{F}}(1,1,\lambda, 
(\log_+x)^{\frac{1}{1-2s}})$.
Then following holds.
\begin{enumerate}
\item If $\lambda < \lambda_s$, then
\begin{equation*}
\abs{u(x,t)} \lesssim_{s,\lambda}
e^{-(1-\epsilon) \(\frac{x^2}{2} P_{s,y}(x)- 
\lambda 2^{\frac{2s}{1-2s}} (\log_+\sqrt{2}x)^{\frac{1}{1-2s}}\)}
,\quad t\in \R,
\end{equation*}
for every $\epsilon>0$,
and the same estimate holds for $\hat{u}(x,t)$ as well.
\item Let $\theta:(0,2^{\frac{1}{2s}-1}]\rightarrow [0,1]$ be
a continuous function with $\theta(r)\le \frac{(2r)^{2s}}{2}$.
If $\lambda \ge \lambda_s$, then
\begin{equation*}
\abs{u(x,t)} \lesssim_{s,\lambda} 
e^{-(1-\epsilon) \theta(y)^{\frac{1}{2s}}\(\frac{x^2}{2} P_{s,y}(x)- 
\lambda 2^{\frac{2s}{1-2s}} (\log_+\sqrt{2}x)^{\frac{1}{1-2s}}\)}
,\quad t\in \R,
\end{equation*}
for every $\epsilon>0$,
and the same estimate holds for $\hat{u}(x,t)$ as well.
\end{enumerate}
\end{thm}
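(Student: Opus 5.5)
The idea is to expand the solution in Hermite functions and use the coefficient decay from Theorem \ref{T:P-coeff}(1). With $H=-\partial_x^2+x^2$ and $Hh_n=(2n+1)h_n$, the solution of (\ref{E:HOSE}) is
\[
u(x,t)=\sum_{n}e^{i(2n+1)t}\<u_0,h_n\>h_n(x),
\]
so that
\[
\abs{u(x,t)}\le\sum_n\abs{\<u_0,h_n\>}\,\abs{h_n(x)}
\]
uniformly in $t$. Since $\hat h_n=(-i)^n h_n$, the same bound holds for $\hat u(\cdot,t)$, which settles the last assertion in both parts. By Theorem \ref{T:P-coeff}(1), $\abs{\<u_0,h_n\>}\lesssim e^{-(1-\epsilon)y n^{1/(2s)}}$ with $y$ as in the statement. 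It therefore suffices to bound
\[
S(x)=\sum_n e^{-(1-\epsilon)yn^{1/(2s)}}\abs{h_n(x)}
\]
for large $\abs{x}$.

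I would split $S(x)$ at $n\approx x^2/2$, using the known asymptotics of Hermite functions. For $2n+1<x^2$ (the oscillatory-free, exponentially decaying region beyond the turning point $\sqrt{2n+1}$), the Plancherel–Rotach/Liouville–Green bound gives
\[
\abs{h_n(x)}\lesssim \exp\Bigl(-\tfrac{x}{2}\sqrt{x^2-2n-1}+\tfrac{2n+1}{2}\log\tfrac{x+\sqrt{x^2-2n-1}}{\sqrt{2n+1}}\Bigr),
\]
or a cruder version such as $\abs{h_n(x)}\lesssim e^{-\frac{x^2}{2}}(2x^2)^{n/2}/\sqrt{n!}$ up to polynomial factors. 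For $2n+1\ge x^2$, the bound $\abs{h_n}\le 1$ together with the coefficient decay gives a contribution of order $e^{-(1-\epsilon)y(x^2/2)^{1/(2s)}}$. Since $1/(2s)>1$, this is smaller than any Gaussian and is harmless. The main term is thus a maximisation over $n$ of
\[
-\tfrac{x^2}{2}+\tfrac{n}{2}\log(2x^2)-\tfrac12\log n!-y n^{1/(2s)}.
\]
The balancing $n$ satisfies roughly $\tfrac12\log(2x^2/n)\approx \tfrac{y}{2s}n^{\frac{1}{2s}-1}$, which gives $n^*\approx (\tfrac{2s}{y}\log_+\sqrt2 x)^{\frac{2s}{1-2s}}$ to leading order. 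Substituting back should produce the factor $P_{s,y}(x)$: the square root $\sqrt{1-2n^*/x^2}$ comes from the exact turning-point exponent. It should also produce the logarithmic correction $L_{s,y}$. Rewriting that correction in terms of $\lambda$ via the definition of $y$ gives $\lambda 2^{\frac{2s}{1-2s}}(\log_+\sqrt2x)^{\frac1{1-2s}}$. The sum over $n$ costs only a polynomial number of terms, which is absorbed into the $(1-\epsilon)$ factor.

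\textbf{Main obstacle.} The delicate step is controlling the optimisation uniformly, and this is where the case split $\lambda<\lambda_s$ versus $\lambda\ge\lambda_s$ should enter. In case (1), the maximiser $n^*$ lies in the admissible range and the coefficient decay is strong enough that the saddle-point value matches the claimed exponent. In case (2), $y$ is small: from $y=(\frac{1-2s}{\lambda})^{\frac{1-2s}{2s}}s$, we get $\lambda\ge\lambda_s$ exactly when $y\le 2^{\frac1{2s}-1}$, which is the domain of $\theta$. There the crude estimate can fail near the turning point. I would weaken the coefficient bound to $e^{-(1-\epsilon)\theta(y)yn^{1/(2s)}}$, or equivalently rescale $x\mapsto\theta(y)^{1/(2s)}x$. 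The constraint $\theta(r)\le(2r)^{2s}/2$ is what should restore the inequality needed to compare the $n$-sum with the Gaussian at the rescaled point, yielding the exponent multiplied by $\theta(y)^{1/(2s)}$. I would first check the case (1) computation carefully with the exact turning-point asymptotics, and then treat case (2) by this scaling reduction.
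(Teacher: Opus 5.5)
Your skeleton matches the paper's proof. You expand in Hermite functions, bound the coefficients by $e^{-(1-\epsilon)yn^{1/(2s)}}$ using Theorem \ref{T:P-coeff}(1), and then estimate $\sum_n e^{-(1-\epsilon)yn^{1/(2s)}}\abs{h_n(x)}$. For that last step the paper simply quotes Theorem \ref{T:WHE} with $\kappa=1$, $\beta=0$. The proof of that theorem is your saddle-point computation: Plancherel--Rotach, critical point $n_c=(\frac{2s}{y}\log_+\sqrt2x)^{\frac{2s}{1-2s}}+O(1)$, and maximal value $L_{s,y}(x)-\frac{x^2}{2}P_{s,y}(x)$. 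Your treatment of $\hat u$ via $\hat h_n=(-i)^nh_n$ is fine.

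Your ``main obstacle'' is a misdiagnosis, and your plan for part (2) does not work as stated.

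\textbf{Where the threshold really comes from.} The threshold $y=2^{\frac1{2s}-1}$ is an artifact of the paper's proof of Theorem \ref{T:WHE}. There the sum is split at $N\approx c(y,s)Q_{s,y}(x)^{2s}$ with $c(y,s)=2/(2y)^{2s}$. One needs $c<1$ so that $e^{\phi_n}/\sinh\phi_n\le 1+(1-c)^{-1/2}$ for $n<N$. In case (2) the paper takes $c=2\theta(y)/(2y)^{2s}$; the constraint $\theta(r)\le(2r)^{2s}/2$ only keeps $c\le 1$. The factor $\theta(y)^{1/(2s)}$ then enters through the tail $n\ge N$, via $yN^{1/(2s)}\approx\theta(y)^{1/(2s)}\bigl(\frac{x^2}{2}P_{s,y}-L_{s,y}\bigr)$.

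\textbf{Why your case-(2) plan fails.} Nothing breaks near the turning point when $y$ is small. Weakening the coefficient bound to $\theta(y)y$ only makes $y$ smaller, which by your own reasoning would make things worse. Rescaling $x\mapsto\theta(y)^{1/(2s)}x$ is not equivalent to that weakening.

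\textbf{The fix: no case distinction is needed.} Since $P_{s,y}\le 1$ and the logarithmic term is nonnegative, the bracket is at most $\frac{x^2}{2}$. It is also eventually positive, and $0\le\theta(y)^{1/(2s)}\le1$. Hence both claimed bounds follow from $S(x)\lesssim_\epsilon e^{-(1-\epsilon)x^2/2}$ for every $\epsilon>0$. Your crude route proves this for every $y>0$:
\begin{itemize}
\item For $n>x^2/4$, use $\abs{h_n}\lesssim 1$ together with the super-Gaussian factor $e^{-(1-\epsilon)yn^{1/(2s)}}$; this works because $\frac1{2s}>1$.
\item For $n\le x^2/4$, use $\abs{H_n(x)}\le(2\abs{x})^n e^{n^2/(4x^2)}$. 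The correction factor here is $e^{n^2/(4x^2)}\le e^{n/16}$, not a polynomial factor. Maximising over $n$ and counting terms gives a contribution $\lesssim x^2e^{-x^2/2+O((\log\abs{x})^{1/(1-2s)})}$.
\end{itemize}

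This also disposes of two points your sketch leaves implicit. First, the coefficient bound really produces $P_{s,(1-\epsilon)y}$ and $L_{s,(1-\epsilon)y}$, not $P_{s,y}$ and $L_{s,y}$. Second, the Plancherel--Rotach formula is only valid for $\phi$ in a fixed compact interval, whereas near $n_c$ one has $\cosh\phi=x/\sqrt{2n+1}\to\infty$. Both are harmless because the factor $(1-\epsilon)$ multiplying $\frac{x^2}{2}P_{s,y}$ swamps all lower-order terms. So drop the scaling reduction; the precision of Theorem \ref{T:WHE} is more than this statement can use.
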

\begin{thm}\label{T:P-evolution-HD}
Let $d\ge 2$ and let $0<s<\frac{1}{2}$. 
Let $y=\(\frac{1-2s}{\lambda}\)^{\frac{1-2s}{2s}}s$.
Let $P_{s,y}$ and $\lambda_s$ be as in equation (\ref{E:parameters}).
Let $u(x,t)$ be the solution of problem (\ref{E:HOSE}), and
let
$u_0\in E^d_{\mathcal{F}}(1,1,\lambda, 
(\log_+x)^{\frac{1}{1-2s}})$.
Then following holds.
\begin{enumerate}
\item If $(2d)^{\frac{2s}{1-2s}}\lambda < \lambda_s$, then
\begin{equation*}
\abs{u(x,t)} \lesssim_{s,\lambda} \prod_{i=1}^d
e^{-(1-\epsilon) \(\frac{x^2}{2} P_{s,y}(x_i)- 
\lambda (4d)^{\frac{2s}{1-2s}} (\log_+\sqrt{2}x_i)^{\frac{1}{1-2s}}\)}
,\quad t\in \R,
\end{equation*}
for every $\epsilon>0$,
and the same estimate holds for $\hat{u}(x,t)$ as well.
\item Let $\theta:(0,2^{\frac{1}{2s}-1}]\rightarrow [0,1]$ be
a continuous function with $\theta(r)\le \frac{(2r)^{2s}}{2}$.
If $(2d)^{\frac{2s}{1-2s}}\lambda \ge \lambda_s$, then
\begin{equation*}
\abs{u(x,t)} \lesssim_{s,\lambda} \prod_{i=1}^d
e^{-(1-\epsilon)\theta(y)^{\frac{1}{2s}} 
\(\frac{x^2}{2} P_{s,y}(x_i)- 
\lambda (4d)^{\frac{2s}{1-2s}} (\log_+\sqrt{2}x_i)^{\frac{1}{1-2s}}\)}
,\quad t\in \R,
\end{equation*}
for every $\epsilon>0$,
and the same estimate holds for $\hat{u}(x,t)$ as well.
\end{enumerate}
\end{thm}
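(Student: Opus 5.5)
Since $u(\cdot,t)=\sum_{\alpha} e^{-it(2\abs{\alpha}+d)}\<u_0,\bold{h}_\alpha\>\bold{h}_\alpha$, I will use the triangle inequality to remove the time dependence:
\begin{equation*}
\abs{u(x,t)}\le \sum_{\alpha}\abs{\<u_0,\bold{h}_\alpha\>}\,\abs{\bold{h}_\alpha(x)}
\le \sum_{\alpha} e^{-(1-\epsilon)\frac{y}{2d}\sum_j\alpha_j^{\frac{1}{2s}}}\prod_{j=1}^d \abs{h_{\alpha_j}(x_j)}
= \prod_{j=1}^d \Big(\sum_{n\ge0} e^{-(1-\epsilon)\frac{y}{2d} n^{\frac{1}{2s}}}\abs{h_n(x_j)}\Big).
\end{equation*}
The middle step is Theorem \ref{T:P-coeff}(2) with $y=\(\frac{1-2s}{\lambda}\)^{\frac{1-2s}{2s}}s$. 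The Hermite coefficients of $\hat u$ differ only by the unimodular factors $(-i)^{\abs{\alpha}}$, so $\hat u$ is handled identically. The problem therefore reduces to a one-dimensional estimate for the series $S(x)=\sum_n e^{-y' n^{1/(2s)}}\abs{h_n(x)}$ with $y'=(1-\epsilon)y/(2d)$. This is the same series as in the proof of Theorem \ref{T:P-evolution}, but with $y$ replaced by $y/(2d)$.

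The next observation is that $y/(2d)$ is again of the form $y$ for a modified $\lambda$. Solving $\(\frac{1-2s}{\lambda'}\)^{\frac{1-2s}{2s}}s=\frac{y}{2d}$ gives $\lambda'=(2d)^{\frac{2s}{1-2s}}\lambda$. This explains the threshold $(2d)^{\frac{2s}{1-2s}}\lambda<\lambda_s$ in the statement, and it turns the case split of Theorem \ref{T:P-evolution} into the case split here. I would then reuse the one-dimensional argument with $\lambda'$ in place of $\lambda$. Concretely, that argument uses two bounds on $\abs{h_n(x)}$. For $x^2\ge 2n+1$, Plancherel–Rotach or turning-point asymptotics give Gaussian-type decay $\abs{h_n(x)}\lesssim e^{-\frac{x^2}{2}\sqrt{1-(2n+1)/x^2}\,+\,\dots}$. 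For $x^2<2n+1$, one has the uniform bound $\abs{h_n}\lesssim 1$, and the coefficient decay $e^{-y'n^{1/(2s)}}$ must beat the missing Gaussian.

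I would split the sum at $n_0(x)\approx \frac{1}{2}\(\frac{2s}{y'}\log_+\sqrt2 x\)^{\frac{2s}{1-2s}}$, which is where $P_{s,y}$ comes from. In the tail $n\ge n_0$, the estimate $y' n^{1/(2s)}\ge \frac{x^2}{2}$ holds in case (1). In case (2) it fails, and only a fraction $\theta(y)^{1/(2s)}$ of the Gaussian survives; this is where the hypothesis $\theta(r)\le (2r)^{2s}/2$ enters. For $n< n_0$, I would bound $\abs{h_n(x)}$ by $e^{-\frac{x^2}{2}P(x)}e^{c\, n}$ and optimize $\sup_n\big(c n-y'n^{1/(2s)}\big)$. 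This Legendre-type maximization produces the correction $L_{s,y}$, which becomes $\lambda (4d)^{\frac{2s}{1-2s}}(\log_+\sqrt2 x_i)^{\frac{1}{1-2s}}$ after inserting $\lambda'$ and the factor $2^{\frac{2s}{1-2s}}$ from the one-dimensional formula. The number of terms is polynomial in $\log x$, so the counting loss is absorbed by the $(1-\epsilon)$.

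The main obstacle is keeping the exponent exactly: the claimed exponent uses $P_{s,y}$ with the original $y$, not $y/(2d)$, and the constant $(4d)^{\frac{2s}{1-2s}}=2^{\frac{2s}{1-2s}}(2d)^{\frac{2s}{1-2s}}$ must come out exactly. I would first carry out the one-dimensional computation with a general parameter $y'$ and then substitute $y'=y/(2d)$. Any discrepancy in $P$ is only a lower-order change inside the square root, $O((\log x)^{\frac{2s}{1-2s}}/x^2)$, which can be absorbed into $\epsilon$ at the cost of constants depending on $s$ and $\lambda$. Finally, I take the product over $j$.
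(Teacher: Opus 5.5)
Your proposal is correct and follows the paper's route: the paper proves this theorem by combining Theorem~\ref{T:P-coeff}(2) with Corollary~\ref{C:HDWHE}, which is the coordinatewise product of Theorem~\ref{T:WHE} with $\kappa=1$, $\beta=0$. Your identification of the rate $y/(2d)$ with $\lambda'=(2d)^{\frac{2s}{1-2s}}\lambda$ is what yields both the threshold and the constant $(4d)^{\frac{2s}{1-2s}}$, and the $P$-discrepancy and polynomial prefactors are indeed absorbed by $(1-\epsilon)$. You should simply cite Theorem~\ref{T:WHE} with $y'=(1-\epsilon)y/(2d)$ rather than re-derive the one-dimensional bound, because your sketch of it is inaccurate: $n_c\approx\bigl(\frac{2s}{y'}\log_+\sqrt{2}x\bigr)^{\frac{2s}{1-2s}}$ is where the exponent is maximized (producing $P_{s,y'}$ and $L_{s,y'}$), whereas $y'n^{\frac{1}{2s}}\ge \frac{x^2}{2}$ only holds for $n\gtrsim \bigl(\frac{x^2}{2y'}\bigr)^{2s}$, which is where the paper actually splits the sum.
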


A key ingredient in the proofs of Theorem \ref{T:P-evolution}
and \ref{T:P-evolution-HD}
are the asymptotic formulas for the
weighted $l^k-$ sum of the Hermite functions, which are as follows.
\begin{thm}\label{T:WHE}
Let $\kappa >0, \beta\in \R$, and let $0<s<\frac{1}{2}$. 
Let $P_{s,y}$ and $L_{s,y}$ be as in equation (\ref{E:parameters}).
Then, we have:
\begin{enumerate}
\item For every $y>2^{\frac{1}{2s}-1}$, 
and for all $x\in \R\setminus[-1,1]$, the following estimate holds.
\begin{equation}\label{E:Asymp-WHE}
\sum_{n\ge 1} \frac{e^{-\kappa y n^{\frac{1}{2s}}}}{n^{\beta}}
\abs{h_n(x)}^{\kappa} 
\lesssim_{\kappa,\beta,s,y} 
\abs{x} (\log_+\sqrt{2}x)^{-\frac{2s(\kappa/4+\beta)}{1-2s}}
e^{- \kappa \(\frac{x^2}{2} P_{s,y}(x)- 
L_{s,y}(x)\)}.
\end{equation}
\item For every $y\le 2^{\frac{1}{2s}-1}$, and for all 
$x\in \R\setminus[-1,1]$, 
the following estimate holds.
\begin{equation}\label{E:Asymp-WHE1}
\sum_{n\ge 1} \frac{e^{-\kappa y n^{\frac{1}{2s}}}}{n^{\beta}}
\abs{h_n(x)}^{\kappa} 
\lesssim_{\kappa,\beta,s,y}  
\abs{x} (\log_+\sqrt{2}x)^{-\frac{2s(\kappa/4+\beta)}{1-2s}}
e^{- \kappa \theta(y)^{\frac{1}{2s}}\(\frac{x^2}{2} P_{s,y}(x)- 
L_{s,y}(x)\)}.
\end{equation}
\end{enumerate}
\end{thm}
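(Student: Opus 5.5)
We will estimate the sum by splitting $n$ into two regimes relative to the turning point $x^2\approx 2n+1$ of $h_n$. We use the classical uniform bounds for Hermite functions (Plancherel--Rotach type, or the elementary bounds of Askey--Wainger type). First, $\abs{h_n(x)}\lesssim n^{-1/12}$, or $(2n+1-x^2)^{-1/4}$, in the oscillatory region $x^2\le 2n+1$. Second, in the monotone region $x^2\ge 2n+1$ we have the exponential bound
\begin{equation*}
\abs{h_n(x)} \lesssim (x^2-2n-1)^{-1/4}
\exp\Big(-\tfrac{x}{2}\sqrt{x^2-2n-1}+\big(n+\tfrac12\big)\log\frac{x+\sqrt{x^2-2n-1}}{\sqrt{2n+1}}\Big),
\end{equation*}
which for $n\ll x^2$ gives $\abs{h_n(x)}\lesssim n^{-1/4}e^{-x^2/2}(\sqrt{2}x)^{n}/\sqrt{n!}$ up to polynomial factors. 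Raising to the power $\kappa$, the summand is then controlled by $\exp(\kappa\,\Phi_x(n))$ with
\begin{equation*}
\Phi_x(n)= -y\,n^{\frac1{2s}} - \tfrac{x^2}{2}\sqrt{1-\tfrac{2n}{x^2}}\cdot(\ldots) + n\log(\sqrt2 x) - \tfrac{n}{2}\log n + \cdots,
\end{equation*}
and the task becomes a Laplace-type maximisation of $\Phi_x$ over $n$.

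The key step is to locate the maximiser. Balancing $y n^{\frac1{2s}}$ against $n\log(\sqrt2 x)$, i.e.\ $\frac{y}{2s}n^{\frac1{2s}-1}\approx \log(\sqrt2 x)$, gives
\begin{equation*}
n_*=\Big(\frac{2s}{y}\log_+\sqrt2 x\Big)^{\frac{2s}{1-2s}}.
\end{equation*}
This is exactly the quantity under the square root in $P_{s,y}$: $1-\frac{2n_*}{x^2}$ is $P_{s,y}(x)^2$. Substituting back, $n_*\log(\sqrt2x)-yn_*^{1/(2s)}=(1-2s)n_*\log(\sqrt2 x)=L_{s,y}(x)$, and the Gaussian factor $e^{-\frac{x^2}{2}\sqrt{1-2n_*/x^2}}$ yields $\frac{x^2}{2}P_{s,y}(x)$. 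The remaining $-\frac n2\log n$ and the polynomial prefactors $n^{-\beta}$, $n^{-\kappa/4}$ at $n=n_*$ produce the factor $(\log_+\sqrt2x)^{-\frac{2s(\kappa/4+\beta)}{1-2s}}$; terms of size $n_*\log n_*$ are of lower order than $L_{s,y}$ and are absorbed into the implied constants or into the exponent carefully. We then bound the sum over $n$ in the monotone region by (number of relevant terms)$\times$(max). Since $\Phi_x$ is concave in $n$ there, the tail beyond a window of width $O(x^2)$ is summable geometrically, and the crude count $\lesssim x^2$ becomes $\abs{x}$ after using the concavity, or after a Gaussian-width estimate of the Laplace sum.

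Next comes the oscillatory region $n\gtrsim x^2/2$. There $\abs{h_n}^\kappa\lesssim 1$ and the weight gives $e^{-\kappa y n^{1/(2s)}}\le e^{-\kappa y (x^2/2)^{1/(2s)}}$. Since $\frac{1}{2s}>1$, this is much smaller than $e^{-\kappa x^2/2}$ precisely when $y(x^2/2)^{\frac1{2s}}\ge \frac{x^2}{2}$ for $\abs{x}\ge1$; that is, when $y\ge 2^{\frac1{2s}-1}$ on the worst case $x^2=1$. This explains the threshold in part (1). For $y>2^{\frac1{2s}-1}$ the oscillatory contribution is dominated by the right side of \eqref{E:Asymp-WHE}, completing part (1).

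The main obstacle is part (2), $y\le 2^{\frac1{2s}-1}$, where the oscillatory tail near $\abs{x}\approx1$ is no longer dominated by the full Gaussian. Here I would first try to show that $\theta(y)\,x^2/2\le y\,(x^2/2)^{\frac1{2s}}$ for all $\abs{x}\ge1$, which is where $\theta(y)\le\frac{(2y)^{2s}}{2}$ should enter. I would then run the same monotone-region analysis on a rescaled problem: write $e^{-yn^{1/(2s)}}\le e^{-\theta(y)^{1/(2s)}\tilde y\, n^{1/(2s)}}$ with a comparison parameter, or equivalently use $\abs{h_n}^{\kappa}\le\abs{h_n}^{\kappa\theta^{1/(2s)}}$ (since $\abs{h_n}\lesssim1$). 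This reduces part (2) to the part (1) computation with exponent $\kappa\theta(y)^{1/(2s)}$ in front of $\frac{x^2}{2}P_{s,y}-L_{s,y}$. Checking that the constants match exactly, so that the same $P_{s,y}$ and $L_{s,y}$ appear and not versions with a rescaled $y$, is the delicate bookkeeping I expect to need most care.
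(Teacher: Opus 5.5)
Your analysis of the range $n\ll x^2$ is essentially the paper's. It uses a Plancherel--Rotach bound, concavity of the exponent, the critical point $n_*=\bigl(\frac{2s}{y}\log_+\sqrt2x\bigr)^{\frac{2s}{1-2s}}$, and the identities $n_*\log_+\sqrt2x-yn_*^{\frac1{2s}}=L_{s,y}(x)$ and $\frac x2\sqrt{x^2-2n_*}=\frac{x^2}{2}P_{s,y}(x)$. One small correction: the $-\frac n2\log n$ term is unbounded, so it is discarded because of its sign, not absorbed into constants.

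The gap is in part (2), and it comes from a wrong account of the threshold $y>2^{\frac1{2s}-1}$. The implied constant may depend on $y$, so only $\abs{x}\ge X_0(y,s)$ matters; for small $y$, $P_{s,y}(x)$ can fail to be real near $\abs{x}=1$ anyway. For every $y>0$ one eventually has $y(x^2/2)^{\frac1{2s}}\ge 2x^2$, so the range $n\ge x^2/2$ contributes $O(e^{-\kappa x^2})$ whatever $y$ is. The worst case $x^2=1$ is therefore irrelevant.

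In the paper the threshold is an artifact of the splitting point $N+1=c(y,s)Q_{s,y}(x)^{2s}$ with $c(y,s)=2/(2y)^{2s}$. This is chosen so that $y(N+1)^{\frac1{2s}}=\frac{x^2}{2}P_{s,y}(x)-L_{s,y}(x)$. The condition $c<1$, which is equivalent to $y>2^{\frac1{2s}-1}$, is what gives $e^{\phi_n}/\sinh\phi_n\le 1+(1-c)^{-1/2}$ on the range $n+1\le c\,x^2/2$. For part (2) the paper takes $c=2\theta(y)/(2y)^{2s}$. The crude bound $\abs{h_n}\le\pi^{-1/4}$ for $n\ge N$ then produces exactly the factor $\theta(y)^{\frac1{2s}}$ in the exponent.

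Because of this misdiagnosis, the step you propose for part (2) fails as written. At $x^2=1$, the inequality $\theta(y)\frac{x^2}{2}\le y(\frac{x^2}{2})^{\frac1{2s}}$ reads $\theta(y)\le t$ with $t=2y\,2^{-\frac1{2s}}\in(0,1]$. The hypothesis $\theta(y)\le\frac{(2y)^{2s}}{2}$ only gives $\theta(y)\le t^{2s}$, and $t^{2s}\ge t$.

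The power that matches the hypothesis is $\theta(y)^{\frac1{2s}}$: the inequality $\theta(y)^{\frac1{2s}}\le t$ is exactly $\theta(y)\le t^{2s}$. This is also exactly the condition $\tilde y:=y\,\theta(y)^{-\frac1{2s}}\ge 2^{\frac1{2s}-1}$ that your comparison-parameter reduction needs. Combined with the monotonicity of $P_{s,y}$ (increasing in $y$) and $L_{s,y}$ (decreasing in $y$), that reduction would go through.

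None of this is actually necessary. Your monotone-range estimate uses no lower bound on $y$, and the tail is negligible for large $\abs{x}$. Once the sum over the monotone range is carried out, your argument gives the part (1) estimate for every $y>0$. Since $0\le\theta(y)\le1$ and $\frac{x^2}{2}P_{s,y}(x)-L_{s,y}(x)\to+\infty$, the part (2) bound follows immediately from it.
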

\begin{cor}\label{C:HDWHE}
Let $\kappa >0, \beta\in \R$, and let $0<s<\frac{1}{2}$. 
Let $P_{s,y}$ and $L_{s,y}$ be as in equation (\ref{E:parameters}).
Then, we have:
\begin{enumerate}
\item For every $y>2^{\frac{1}{2s}-1}$, 
and for all $x\in \R^d\setminus[-1,1]^d$, the following estimate holds.
\begin{equation*}
\sum_{\alpha\in\N^d} \frac{e^{-\kappa y \sum_{i=1}^d\alpha_i^
{\frac{1}{2s}}}}{\alpha^{\beta}}
\abs{\bold{h}_{\alpha}(x)}^{\kappa} 
\lesssim_{\kappa,\beta,s,y}  \prod_{i=1}^d\abs{x_i} 
(\log_+\sqrt{2}x_i)^{-\frac{2s(\kappa/4+\beta)}{1-2s}} 
e^{- \kappa \(\frac{x^2}{2} P_{s,y}(x_i)- 
L_{s,y}(x_i)\)}.
\end{equation*}
\item For every $y\le 2^{\frac{1}{2s}-1}$, 
and for all $x\in \R^d\setminus[-1,1]^d$, the following estimate holds.
\begin{equation*}
\sum_{\alpha\in\N^d} \frac{e^{-\kappa y \sum_{i=1}^d\alpha_i^
{\frac{1}{2s}}}}{\alpha^{\beta}}
\abs{\bold{h}_{\alpha}(x)}^{\kappa} 
\lesssim_{\kappa,\beta,s,y}  \prod_{i=1}^d\abs{x_i} 
(\log_+\sqrt{2}x_i)^{-\frac{2s(\kappa/4+\beta)}{1-2s}} 
e^{- \kappa \theta(y)^{\frac{1}{2s}}\(\frac{x^2}{2} P_{s,y}(x_i)- 
L_{s,y}(x_i)\)}.
\end{equation*}
\end{enumerate}
Where $\alpha^{\beta}$ stands for $\alpha_1^{\beta}\cdot\alpha_2^{\beta}\dots
\alpha_d^{\beta}$.
\end{cor}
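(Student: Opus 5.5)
The plan is to deduce Corollary \ref{C:HDWHE} from Theorem \ref{T:WHE} by exploiting the tensor-product structure. Since $\bold{h}_{\alpha}(x)=\prod_{i=1}^d h_{\alpha_i}(x_i)$, we have $\abs{\bold{h}_{\alpha}(x)}^{\kappa}=\prod_i\abs{h_{\alpha_i}(x_i)}^{\kappa}$. Likewise $e^{-\kappa y\sum_i\alpha_i^{1/(2s)}}=\prod_i e^{-\kappa y\alpha_i^{1/(2s)}}$ and $\alpha^{\beta}=\prod_i\alpha_i^{\beta}$. Hence the summand factorizes completely. Because all terms are nonnegative, Tonelli's theorem lets us write the sum over $\alpha\in\N^d$ (with every $\alpha_i\ge1$, as the notation $\alpha^\beta$ implicitly requires) as a product of one-dimensional sums:
\begin{equation*}
\sum_{\alpha\in\N^d} \frac{e^{-\kappa y \sum_{i}\alpha_i^{\frac{1}{2s}}}}{\alpha^{\beta}}\abs{\bold{h}_{\alpha}(x)}^{\kappa}
=\prod_{i=1}^d\Big(\sum_{n\ge1}\frac{e^{-\kappa y n^{\frac{1}{2s}}}}{n^{\beta}}\abs{h_n(x_i)}^{\kappa}\Big).
\end{equation*}

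Next I would bound each factor by Theorem \ref{T:WHE}. Part (1) of Theorem \ref{T:WHE} gives part (1) of the corollary, and part (2) gives part (2). In the second case the same function $\theta$ appears, and $y$ is common to all coordinates, so $\theta(y)$ is the same in every factor. Multiplying the $d$ one-dimensional estimates yields the product bound, with implicit constant equal to the $d$-th power of the one-dimensional constant, which still depends only on $\kappa,\beta,s,y$ (and $d$). The exponent written as $\frac{x^2}{2}$ in the statement should be read as $\frac{x_i^2}{2}$ in the $i$-th factor.

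The main obstacle is the domain. Theorem \ref{T:WHE} applies only when $\abs{x_i}>1$, but $x\in\R^d\setminus[-1,1]^d$ only guarantees this for at least one coordinate. For coordinates with $\abs{x_i}\le1$, I would use the uniform bound $\norm{h_n}_\infty\lesssim1$. This makes the one-dimensional sum $O(1)$, because $e^{-\kappa y n^{1/(2s)}}$ beats any power $n^{-\beta}$. On $[-1,1]$ the right-hand factor $\abs{x_i}(\log_+\sqrt2x_i)^{-\cdots}e^{-\kappa(\cdots)}$ is not literally defined ($P_{s,y}$ only makes sense for large argument, and it degenerates at $x_i=0$). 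So either one interprets the corollary with every $\abs{x_i}>1$, which I expect is the intended reading, or one replaces those factors by a constant. I would state this explicitly and otherwise regard the proof as an immediate consequence of the factorization together with Theorem \ref{T:WHE}.
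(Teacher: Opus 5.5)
Your factorization argument is correct and is exactly the route the paper intends. The paper gives no separate proof and treats the corollary as an immediate consequence of Theorem \ref{T:WHE}, obtained by splitting the nonnegative summand over the coordinates and multiplying the one-dimensional bounds. Your caveats are also right: the estimate only makes sense for $x$ with every $\abs{x_i}>1$, and $\frac{x^2}{2}$ must be read as $\frac{x_i^2}{2}$ in the $i$-th factor.
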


Recently, in \cite{Neyt.et.al.-2025}, Neyt-Toft-Vindas proved an
exponential decay estimate for the Hermite coefficients of functions
from $E^1_{\mathcal{F}}(a,c,\lambda,w)$,
under a suitable condition on the weight function. 
In this context, inspired by the investigation of Garg and Thangavelu in 
\cite{Garg2009}, we study the Hermite projection operators of functions 
from the spaces $E^d_{\mathcal{F}}(a,c,\lambda,w)$. 
We establish the following results.

\begin{thm}\label{T:Pro-estimate}
Let $w$ be a weight function such that $\phi(t)=w(e^t)$ is convex. Let $f\in E^d_{\mathcal{F}}(a,c,\lambda,w)$, and
\begin{equation*}
\int_{x}^{\infty} \frac{w(t)}{t^3} dt = O\(\frac{w(x)}{x^2}\),\quad x
\rightarrow \infty.
\end{equation*}
Then following holds.
\begin{enumerate}
\item If $a\ge 2$, then
\begin{equation*}
\norm{P_kf}_2 \lesssim_{\nu, \lambda} 
2^{2k} \(k+d-1\)!\,
e^{2k\log \frac{c}{\sqrt{2}}-\frac{1}{l}\phi^*\(2lk\)}, 
\end{equation*}
for some $l>0$, where $\phi^*$ is the Young conjugate of 
$\phi(t) = w(e^t)$. 

\item If $0 <  a < 2$, then
\begin{equation*}
\norm{P_kf}_2 \lesssim_{\nu, \lambda} 
2^{2k} \(k+d-1\)!\,
e^{2k\log \frac{c}{\sqrt{2}}-\frac{1}{l}\psi_{a,c}^*(2lk)}, 
\end{equation*}
for some $l>0$, where $\psi_{a,c}^*$ is the Young conjugate of 
$\psi_{a,c}(t) = 
\frac{1}{2c^2}\sqrt{\frac{2-a}{2+a}}\,e^{2t}+w(e^t)$. 
\end{enumerate}
\end{thm}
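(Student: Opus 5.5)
Following the strategy of Garg and Thangavelu \cite{Garg2009}, we represent $\norm{P_k f}_2$ through the short-time Fourier transform with Gaussian window, equivalently the Bargmann transform. The key identity is
\begin{equation*}
V_{g}f(z,w) \sim e^{-\frac{|z|^2+|w|^2}{4}}\sum_{k} \sum_{|\alpha|=k}\<f,\bold{h}_\alpha\>\frac{(\bar{\zeta})^\alpha}{\sqrt{2^k\alpha!}}, \qquad \zeta=z+iw,
\end{equation*}
up to normalisation. Integrating $|V_gf|^2$ over a sphere $|\zeta|=r$ (the Laguerre expansion of the twisted-convolution kernel) isolates the $k$-th homogeneous piece. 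By orthogonality of monomials on spheres this gives
\begin{equation*}
\norm{P_kf}_2^2\, \frac{r^{2k}}{2^k k!}\cdot c_{k,d} \lesssim \sup_{|\zeta|=r} e^{\frac{r^2}{2}}|V_gf(\zeta)|^2 ,
\end{equation*}
with $c_{k,d}$ a ratio of factorials of order $(k+d-1)!/k!$. This is where the factor $(k+d-1)!$ in the statement comes from.

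The second step bounds $V_gf(x,\xi)$ using the two pointwise hypotheses. We write $V_gf(x,\xi)=\int f(t)e^{-|t-x|^2/2}e^{-it\xi}dt$ and insert $|f(t)|\le Ce^{-a|t|^2/2+\lambda w(c|t|)}$. For the growth in $x$, we complete the square. The weight is handled by the integrability condition $\int_x^\infty w(t)t^{-3}dt=O(w(x)/x^2)$, which guarantees $\int e^{-\delta|t-x|^2+\lambda w(c|t|)}dt\lesssim e^{\lambda' w(c'|x|)}$. By the fundamental identity of time-frequency analysis, $|V_gf(x,\xi)|=|V_{\hat g}\hat f(\xi,-x)|$, so the same argument applied to $\hat f$ controls growth in $\xi$. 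An interpolation argument, either Phragm\'en--Lindel\"of on the entire Bargmann transform or a convexity estimate in the spirit of Hardy's proof, then gives the bound
\begin{equation*}
e^{\frac{|\zeta|^2}{4}}|V_gf(\zeta)| \lesssim \exp\Big(\beta_a |\zeta|^2+\lambda\, w(c|\zeta|)\Big).
\end{equation*}
Here $\beta_a=0$ when $a\ge 2$, in the sense that the Gaussian of the window is fully absorbed. When $0<a<2$, a residual Gaussian term remains whose coefficient, after optimising the completion of squares, is $\frac{1}{2c^2}\sqrt{\frac{2-a}{2+a}}$ in the variable $c|\zeta|$.

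Third, we combine the two steps and optimise in $r$. Setting $r=e^t/c$ in Case (1) gives
\begin{equation*}
\norm{P_kf}_2\lesssim 2^{2k}(k+d-1)!\,\exp\big(2k\log\tfrac{c}{\sqrt2}-(2kt-\lambda\phi(t))\big).
\end{equation*}
Taking the infimum over $t$ and rescaling by $l$ (to absorb $\lambda$ and the constants produced by the weight estimate) yields $-\frac1l\phi^*(2lk)$. Convexity of $\phi$ is what makes the Young conjugate the right object here. Case (2) is identical with $\phi$ replaced by $\psi_{a,c}(t)=\frac{1}{2c^2}\sqrt{\frac{2-a}{2+a}}e^{2t}+w(e^t)$, which is again convex. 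The crude factors $2^{2k}$ and $(k+d-1)!$ come from bounding $k!$, $2^{k}$ and the sphere constants generously.

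We expect the main obstacle to be the second step. Obtaining a bound on all of $\C^d$, rather than only on the real and imaginary axes, requires Phragm\'en--Lindel\"of applied to a function of order $2$ whose weight factor $e^{\lambda w}$ must be controlled in sectors. For this we would construct an auxiliary analytic minorant, an outer function of $w(c|\zeta|)$, whose existence and size is precisely what the condition $\int_x^\infty w(t)t^{-3}dt=O(w(x)/x^2)$ ensures. Constructing it and tracking the constant $\sqrt{(2-a)/(2+a)}$ is the delicate point.
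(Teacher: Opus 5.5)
Your Step~1 is correct. Orthogonality of the monomials on $S^{2d-1}\subset\C^d$, together with $\int_{S^{2d-1}}\abs{\omega^\alpha}^2\,d\sigma=2\pi^d\alpha!/(\abs{\alpha}+d-1)!$ and \eqref{E:H-C-relation}, gives $\frac{2\pi^{d/2}r^{2k}}{2^k(k+d-1)!}\norm{P_kf}_2^2\le\int_{S^{2d-1}}\abs{Bf(r\omega)}^2\,d\sigma(\omega)$.

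The gap is in Step~2. You need a bound on $Bf$ over the \emph{whole} complex sphere $\{\abs{\zeta}=r\}\subset\C^d$, and for $d\ge 2$ a one-variable Phragm\'en--Lindel\"of argument cannot give it. For $\zeta=x+iy$ the hypotheses yield $\abs{Bf(\zeta)}\lesssim e^{\frac{\abs{y}^2}{4}+\frac{1-a}{1+a}\frac{\abs{x}^2}{4}+L\lambda w(c\abs{x})}$, and the same bound with $x,y$ interchanged. These two bounds separate along the real and imaginary rays of the complex lines $\zeta\mapsto Bf(\zeta u)$ with $u\in S^{d-1}\subset\R^d$; these are the only lines used in Theorem~\ref{T:coeff-decay}. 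Such lines do not fill $\C^d$. On $\zeta\mapsto Bf(\zeta(e_1+ie_2)/\sqrt2)$ one has $\abs{x}=\abs{y}=\abs{\zeta}/\sqrt2$ identically. Both hypotheses then give the same isotropic bound $e^{\abs{\zeta}^2/(4(1+a))+L\lambda w(c\abs{\zeta}/\sqrt2)}$, and Phragm\'en--Lindel\"of gives no improvement. So your estimate $\abs{Bf(\zeta)}\lesssim e^{\beta_a\abs{\zeta}^2+\lambda w(c\abs{\zeta})}$ on $\C^d$ is unproved.

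You place the difficulty in the weighted sector estimate, but that part is already available. The hypothesis $\int_x^\infty w(t)t^{-3}\,dt=O(w(x)/x^2)$ is equivalent to $(\beta^*_2)$ (substitute $u=ts$), so Theorem~\ref{T:WPL} applies directly. Your constants are also read off the statement rather than derived. Completing the square for $Bf$ gives the coefficient $\frac14\frac{1-a}{1+a}$, so the natural threshold is $a=1$ and the natural residual is $\frac14\sqrt{\frac{1-a}{1+a}}$. The threshold $a\ge 2$ and the factor $\sqrt{\frac{2-a}{2+a}}$ arise in the paper because $V(f,f)$ only decays like $e^{-a\abs{z}^2/8}$ (Proposition~\ref{P:V-estimate}), i.e.\ one works with $p=a/2$.

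The paper never needs $Bf$ off such lines. By \eqref{E:L-coeff}, only the spherical mean $F(r)=\int_{S^{2d-1}}V(f,f)(r\omega)\,d\omega$ matters. Proposition~\ref{P:V-estimate} and the Garg--Thangavelu identity for $H_{d-1}G$ place $G(r)=F(\sqrt2r)$ in $E_{H_{d-1}}(a/2,c/\sqrt2,\lambda,w)$. Theorem~\ref{T:w-Laguerre-decay} then works with the one-variable even entire function $U_{d-1}G$, to which Theorem~\ref{T:WPL} and Cauchy's estimate apply. The factor $2^{2k}(k+d-1)!$ comes from \eqref{E:U-exp}, not from sphere constants.

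If you want to keep your route, close the gap degree by degree:
\begin{enumerate}
\item Apply Phragm\'en--Lindel\"of to $\zeta\mapsto Bf(\zeta u)$ and then Cauchy's estimate. This bounds the degree-$k$ homogeneous part $p_k$ of $Bf$ by some $M_k$ on the real sphere $S^{d-1}$, uniformly in $u$.
\item Restrict $p_k$ to a real plane containing $\mathrm{Re}\,z$ and $\mathrm{Im}\,z$, and expand it on the unit circle in $e^{im\phi}$, $\abs{m}\le k$. This shows $\abs{p_k(z)}\le(k+1)2^{k/2}M_k$ on the unit sphere of $\C^d$.
\item Step~1 at $r=1$ then gives $\norm{P_kf}_2^2\lesssim_d 4^k(k+1)^2(k+d-1)!\,M_k^2$, which yields estimates of the type claimed.
\end{enumerate}
This is the analogue of the Kellogg step in Theorem~\ref{T:coeff-decay}. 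It costs only exponential and polynomial factors comparable to those already present in the statement.
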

A slightly stronger version of the above result holds for 
$O(k)$\nobreakdash-finite functions, namely, functions whose 
restrictions to the unit sphere $S^{d-1}$ admit spherical harmonic 
expansions with only finitely many nonzero terms.
\begin{thm}\label{T:O(k)-finite}
Let $f\in E^d_{\mathcal{F}}(a,c,\lambda,w)$ be the $O(k)$-finite functions,
and
\begin{equation*}
\int_{x}^{\infty} \frac{w(t)}{t^3} dt = O\(\frac{w(x)}{x^2}\),\quad x
\rightarrow \infty.
\end{equation*} 
Then following holds.
\begin{enumerate}
\item If $a\ge 1$, then
\begin{equation*}
\norm{P_kf}_2 \lesssim_{\nu, \lambda} 
2^k k! (2k+d)^{(d-2)/4}
e^{2k \log q-\frac{1}{l}\phi^*\(2lk\)}, 
\end{equation*}
for some $l>0$, where $\phi^*$ is the Young conjugate of 
$\phi(t) = w(e^t)$. 

\item If $0 <  a < 1$, then
\begin{equation*}
\norm{P_kf}_2 \lesssim_{\nu, \lambda} 
2^k k! (2k+d)^{(d-2)/4}
e^{2k\log q-\frac{1}{l}\Psi_{a,c}^*(2lk)}, 
\end{equation*}
for some $l>0$, where $\Psi_{a,c}^*$ is the Young conjugate of 
$\Psi_{a,c}(t) = 
\frac{1}{2c^2}\sqrt{\frac{1-a}{1+a}}\,e^{2t}+w(e^t)$. 
\end{enumerate}
\end{thm}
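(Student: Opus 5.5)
We prove the statement for $O(k)$-finite $f\in E^d_{\mathcal{F}}(a,c,\lambda,w)$ by reducing to finitely many one-variable Laguerre problems, one for each spherical harmonic component. Since $f$ is $O(k)$-finite, we write
\begin{equation*}
f(x)=\sum_{j\le J}\sum_{m} f_{j,m}(\abs{x})\,Y_{j,m}(x/\abs{x}),
\end{equation*}
a finite sum. By the Hecke--Bochner formula, each piece $f_{j,m}(r)Y_{j,m}$ has Fourier transform $\widetilde{f}_{j,m}(\rho)Y_{j,m}$, where $\widetilde{f}_{j,m}$ is a Hankel transform of order $j+d/2-1$ of $f_{j,m}$. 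Integrating $f$ against $Y_{j,m}$ over $S^{d-1}$ shows that $f_{j,m}$ and $\widetilde f_{j,m}$ inherit the bounds $C e^{-ar^2/2+\lambda w(cr)}$. The projections also respect this decomposition: $P_k(gY_{j,m})$ vanishes unless $k-j=2n$, and then
\begin{equation*}
P_k(gY_{j,m})= c_n(g)\,\mathcal{L}^{\delta}_n(r)\,Y_{j,m},\qquad \delta=j+\tfrac d2-1,
\end{equation*}
where $\mathcal{L}^\delta_n(r)=L_n^\delta(r^2)e^{-r^2/2}$ (suitably normalized) and $c_n(g)$ is the Laguerre coefficient of $g$ against $r^{-j}$. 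Since there are only finitely many $j$, it suffices to bound $\abs{c_n}$ for a single radial-type function of Laguerre type $\delta$ satisfying the weighted Hardy bounds, and then sum.

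The core step is to estimate the Laguerre coefficients through a generating-function/Bargmann-type transform. We take the Laguerre generating function
\begin{equation*}
\sum_n L^\delta_n(r^2)\,z^n=(1-z)^{-\delta-1}e^{-r^2z/(1-z)},
\end{equation*}
pair it with $g$, and obtain an entire (in $w=z/(1-z)$, after rescaling) function
\begin{equation*}
G(\zeta)=\int_0^\infty g(r)\,e^{-r^2/2}\,(\text{Bessel kernel in } r\zeta)\,r^{2\delta+1}\,dr,
\end{equation*}
whose Taylor coefficients are essentially $c_n/n!$ multiplied by powers of $2$. The bound $\abs{g(r)}\le Ce^{-ar^2/2+\lambda w(cr)}$ gives growth of order $e^{\abs{\zeta}^2/(2(1+a))}$ times a weight. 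The bound on $\widetilde g$ gives, by the Hankel analogue of the Bargmann intertwining, the same growth along the imaginary directions. To exploit this we apply Phragm\'en--Lindel\"of in the sectors between the two directions. Here the hypothesis $\int_x^\infty w(t)t^{-3}\,dt=O(w(x)/x^2)$ is used to build a harmonic majorant of $w(c\abs{\zeta})$ in the sector, through the Poisson integral of the weight. The outcome is
\begin{equation*}
\abs{G(\zeta)}\lesssim \exp\Bigl(\tfrac12\sqrt{\tfrac{1-a}{1+a}}\,\abs{\zeta}^2+C\lambda w(c'\abs{\zeta})\Bigr)\quad\text{for } a<1,
\end{equation*}
and only the weight term survives for $a\ge1$, with $q$ absorbing the scaling $c'$.

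Cauchy estimates on the circle $\abs{\zeta}=e^t$ then give
\begin{equation*}
\abs{c_n}\lesssim n!\,2^n\,\exp\bigl(\Psi(t)-2nt\bigr).
\end{equation*}
Optimizing over $t$, and allowing a parameter $l$ to absorb the constant $C\lambda$ (replace $t$ by $t+\log$ of a constant and use the convexity of $\phi$), produces $e^{-\frac1l\Psi^*_{a,c}(2ln)}$; for $a\ge1$ the Gaussian term drops out and $\Psi$ reduces to $\phi$. Finally, $\norm{\mathcal{L}^\delta_n Y_{j,m}}_2$ supplies the normalization factor $(2k+d)^{(d-2)/4}$, and we convert $n=(k-j)/2$ back to $k$ using $n!\,2^n\le 2^k k!$, which gives the stated bound.

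The main obstacle is the Phragm\'en--Lindel\"of step with the weight. We need a harmonic function in the sector that dominates $\lambda w(c\abs{\zeta})$ on the boundary rays and is still $O(w)$ inside. We would try first the Poisson integral of $w(\abs{\cdot})$ on the half-plane obtained by the conformal map $\zeta\mapsto\zeta^2$, where the integrability condition is exactly what makes this integral converge to $O(w)$. A secondary difficulty is transferring the Fourier-side bound to the imaginary direction of $G$ uniformly in $\delta$, which we expect to follow from the explicit Hankel transform of the kernel.
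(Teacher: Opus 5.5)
Your route differs from the paper's. You split $f$ into its finitely many spherical-harmonic pieces and use Hecke--Bochner to feed each $r^{-j}f_{j,m}$ into the one-variable Hankel--Laguerre setting of Theorem~\ref{T:w-Laguerre-decay}. Note that $r^{-j}f_{j,m}$, not $f_{j,m}$, is the function lying in $E_{H_\delta}$ with $\delta=j+\frac d2-1$, and its boundedness near $r=0$ comes from smoothness of $f$. The paper instead keeps all harmonics together in the vector-valued Bargmann transform $Bf(z,e)$. It applies Phragm\'en--Lindel\"of to the scalar functions $F_g$ and bounds the $L^2(S^{d-1})$-norms of the Taylor coefficients $d_k(e)$ of $z^k$. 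It uses $O(d)$-finiteness only in the Garg--Thangavelu relation with $\norm{P_k(Tf)}_2$.

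Your harmonic-majorant version of the weighted Phragm\'en--Lindel\"of step is sound; the hypothesis on $\int_x^\infty w(t)t^{-3}\,dt$ is exactly $(\beta_2^*)$. For $a\ge1$, though, you must first secure $\abs{G(\zeta)}\lesssim e^{\epsilon\abs{\zeta}^2}$ for every $\epsilon>0$, as in Lemma~\ref{L:U-imp-bounds}, because a quarter-plane with order-two growth is the critical case.

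The genuine gap is your last step. Your Cauchy estimate is for the coefficient of $\zeta^{2n}$ with $2n=k-j$. It therefore produces $q^{k-j}e^{-\frac1l\phi^*(l(k-j))}$, times $2^{2n}\sqrt{n!\,\Gamma(n+\delta+1)}$, which is $\sqrt{2^kk!}$ up to powers of $k$. So the Young conjugate is evaluated at about $lk$, not at $2lk$. When $\phi^*$ is homogeneous (the log-type weights), the factor $2$ can be absorbed by shrinking $l$. In part (2) both sides are only of exponential size, so shrinking $l$ again suffices. In part (1), however, this fails in general. For $w(t)=t$ one has $\phi^*(v)=v\log v-v$, so $\frac1l\phi^*(2lk)=2k\log k+O(k)$, while $\frac1l\phi^*(l(k-j))=k\log k+O(k)$ for every $l$. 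The slack $k!/n!=e^{\frac k2\log k+O(k)}$ gained from $n!\,2^n\le 2^kk!$ cannot pay for the missing $e^{k\log k}$.

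No sharper argument can repair this, because the displayed bound in part (1) is false as stated. Take $a=c=1$ and $w(t)=t$, which satisfies every hypothesis, together with the radial function $f(x)=\int_{S^{d-1}}e^{-\abs{x-x_0\omega}^2/2}\,d\omega$. Then $\abs{f(x)}\lesssim e^{-\abs{x}^2/2+x_0\abs{x}}$ and $\abs{\hat f(\xi)}\lesssim e^{-\abs{\xi}^2/2}$, so $f\in E^d_{\mathcal{F}}(1,1,x_0,w)$. Pairing with $\mathbf{h}_{(k,0,\dots,0)}$ gives $\norm{P_kf}_2\gtrsim (x_0/\sqrt2)^k\,k^{-(d-1)/2}(k!)^{-1/2}$ for even $k$. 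By contrast, the right-hand side of part (1) is at most $B^kk^{-k}$ times a power of $k$, for any fixed $l>0$ and any constant $q$.

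What your argument actually proves is a bound of size $\sqrt{2^kk!}\,q^ke^{-\frac1l\phi^*(lk)}$, up to powers of $k$ and lower-order losses. This is the square root of the stated right-hand side with $l/2$ in place of $l$. It is also what the paper's own intermediate estimate naturally gives: its Cauchy bound on $\int_{S^{d-1}}\abs{d_k(e)}^2\,de$ is recorded with $\phi^*(lk)$. The passage from there to $e^{2k\log q-\frac1l\phi^*(2lk)}$ is only asserted by appeal to Garg--Thangavelu.
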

The structure of this paper is as follows: 
In Section\eqref{S:Preliminaries}, we give the Preliminaries.
In Section \eqref{S:Moto}, we discuss the motivation of our work, which also 
includes a description of methods used to prove our results. 
Additionally, in this section, we derive a Gaussian decay of the 
harmonic oscillator in the higher dimensions. 
In Section \eqref{S:log-weight}, we prove the Theorem 
\ref{T:P-coeff}, \ref{T:P-evolution} 
and \ref{T:WHE} in details, and highlight the proof of Theorem 
\ref{T:P-evolution-HD}. In Section (\ref{T:Laguerre-coeff}), we
prove an exponential decay of Laguerre coefficients of function 
from a suitable function space analogous to 
$E^d_{\mathcal{F}}(a,c,\lambda,w)$, see (\ref{E:E_H-def}).
In the Final Section \eqref{S:Proj-estimates}, 
we justify Theorem \ref{T:Pro-estimate}, and \ref{T:O(k)-finite}.

\section{Definitions and Preliminary results} \label{S:Preliminaries}
\subsection{Weight functions and a Phragmén–Lindelöf type principle.}
The following conditions on the weight function $w$ and its properties 
are essential for our study, which can be seen in 
\cite{Braun.et.al.-1990, Neyt.et.al.-2025}. Let $\sigma>0$.
\begin{enumerate}
\item[$(\alpha)$] $w(t+s) \le L[w(t)+w(s)+1]$, for some $L\ge 1$ and for all
$s,t\ge 0$.\\
\item[$(\beta_{\sigma})$] $\int_{1}^{\infty} w(s) s^{-1-\sigma} ds < \infty$.\\
\item[$(\beta^*_{\sigma})$] $\int_{1}^{\infty} w(ts) s^{-1-\sigma} ds \le 
\frac{\pi}{2} (L-1) w(t)+C$, for some $L\ge 1$ and for all $t\ge 0$.\\
\item[$(\gamma)$] $\log t = o(w(t))$.\\
\item[$(\delta)$] the function $\phi(t) = w(e^t)$ is convex.
\end{enumerate}
\begin{lem}
If $w$ satisfies the condition $(\beta^*_{\sigma})$ 
then it also satisfies $(\alpha)$ and $(\beta_{\sigma})$.
\end{lem}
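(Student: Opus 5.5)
The plan is to derive both conclusions directly from the single inequality $(\beta^*_{\sigma})$, using only that $w$ is non-decreasing and non-negative. I will prove $(\beta_{\sigma})$ first, since it is immediate, and then obtain $(\alpha)$ from a lower bound for the integral in $(\beta^*_{\sigma})$.

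For $(\beta_{\sigma})$, I will take $t=1$ in $(\beta^*_{\sigma})$. This gives
\[
\int_1^\infty w(s)s^{-1-\sigma}\,ds \le \frac{\pi}{2}(L-1)w(1)+C<\infty,
\]
and that is exactly condition $(\beta_{\sigma})$.

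For $(\alpha)$, the first step is to bound the integral in $(\beta^*_{\sigma})$ from below using monotonicity. For $u\ge 0$ and $s\ge 2$ we have $w(us)\ge w(2u)$, so
\[
\int_1^\infty w(us)s^{-1-\sigma}\,ds \ge w(2u)\int_2^\infty s^{-1-\sigma}\,ds = \frac{2^{-\sigma}}{\sigma}\,w(2u).
\]
Combining this with $(\beta^*_{\sigma})$ yields the doubling-type estimate
\[
w(2u)\le A\,w(u)+B,\qquad A=\sigma 2^{\sigma}\tfrac{\pi}{2}(L-1),\quad B=\sigma 2^{\sigma}C.
\]
Next I take $t,s\ge 0$ and assume without loss of generality that $s\le t$. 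Then $t+s\le 2t$, and monotonicity gives
\[
w(t+s)\le w(2t)\le A\,w(t)+B\le A\,(w(t)+w(s))+B,
\]
where the last step uses $w\ge 0$. Setting $L'=\max(A,B,1)$, this becomes $w(t+s)\le L'[w(t)+w(s)+1]$, which is $(\alpha)$ with constant $L'\ge 1$.

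The one point needing care is that the constant in $(\alpha)$ need not equal the $L$ from $(\beta^*_{\sigma})$. I will read the "for some $L\ge1$" in each condition as independent, and note this in the proof. The degenerate case $L=1$ causes no trouble: there $A=0$, so $w(2u)\le B$ for all $u$, which contradicts unboundedness of $w$ and therefore cannot occur. In any event the inequality above still holds with $L'=\max(B,1)$. I expect no real obstacle beyond this choice of constants.
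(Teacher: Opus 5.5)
Your proof is correct: taking $t=1$ in $(\beta^*_{\sigma})$ gives $(\beta_{\sigma})$. The lower bound $\int_1^\infty w(ts)s^{-1-\sigma}\,ds\ge \frac{2^{-\sigma}}{\sigma}\,w(2t)$, valid because $w\ge 0$ is non-decreasing, gives $w(2t)\le A\,w(t)+B$, and $(\alpha)$ then follows from $w(t+s)\le w(2\max(t,s))$; you are right to let the constant in $(\alpha)$ differ from the $L$ in $(\beta^*_{\sigma})$. The paper gives no proof of its own and only attributes the lemma to Braun--Meise--Taylor and Neyt--Toft--Vindas, so there is nothing to compare against, and your argument is a complete, elementary verification.
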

\begin{lem}
If $w$ satisfies the condition $(\beta_{\sigma})$ then $w(t) = o(t^{\sigma})$.
\end{lem}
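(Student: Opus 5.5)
The plan is to argue by contradiction that $w$ must be small at infinity, comparing the integral in $(\beta_{\sigma})$ over a dyadic-type range with the value $w(t)$ itself. The key structural input is that $w$ is non-decreasing, which is part of the standing definition of a weight function in the introduction. Only this monotonicity and the convergence of $\int_1^\infty w(s)s^{-1-\sigma}\,ds$ will be used; none of $(\alpha)$, $(\gamma)$ or $(\delta)$ is needed.

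First, fix $t\ge 1$. Since $w$ is non-decreasing, $w(s)\ge w(t)$ for all $s\ge t$. This gives the lower bound
\begin{equation*}
\int_{t}^{\infty} w(s)s^{-1-\sigma}\,ds \;\ge\; w(t)\int_t^\infty s^{-1-\sigma}\,ds \;=\; \frac{w(t)}{\sigma}\,t^{-\sigma}.
\end{equation*}
Hence $0\le w(t)t^{-\sigma}\le \sigma\int_t^\infty w(s)s^{-1-\sigma}\,ds$.

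Second, the right-hand side is the tail of the convergent integral in $(\beta_{\sigma})$. It is well defined because $w\ge 0$ is monotone, hence measurable, and the integrand is non-negative. The tail therefore tends to $0$ as $t\to\infty$, and so $w(t)t^{-\sigma}\to 0$, which is exactly $w(t)=o(t^{\sigma})$.

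There is no real obstacle here. The only point to watch is to apply monotonicity on the tail $[t,\infty)$ rather than on $[1,t]$: using $[1,t]$ would give a bound going the wrong way for small $s$. If one preferred to avoid the tail formulation, an equivalent route is to integrate over $[t,2t]$ only, which gives $w(t)\,\sigma^{-1}(1-2^{-\sigma})t^{-\sigma}\le\int_t^{2t}w(s)s^{-1-\sigma}\,ds\to 0$ by the Cauchy criterion.
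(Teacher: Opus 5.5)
Your argument is correct and complete: monotonicity of $w$ gives $0\le w(t)t^{-\sigma}\le \sigma\int_t^\infty w(s)s^{-1-\sigma}\,ds$, and the right-hand side is the tail of the integral in $(\beta_{\sigma})$, so it tends to $0$. The paper gives no proof of this lemma; it simply quotes it from the weight-function literature (Braun et al., Neyt--Toft--Vindas), where the standard proof is this same tail-comparison argument.
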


Let  $w$ be a weight function such that $\varphi(u)= w(e^u)$ is convex. 
Let $\varphi^\ast$ be the Young conjugate of $\varphi$, defined by 
$$\varphi^\ast(v)=\sup_{u\ge 0}\left[uv-\varphi(u)\right],~~~~ v\ge 0.$$

Note that $\varphi^\ast$ is convex, unbounded, decreasing and its derivative can be computed by
$$\left[\frac{d}{dv}\varphi^\ast(v)\right](y)=\left(w'\right)^{-1}(y).$$

For any $0<\theta<2 \pi ,$ and $\rho>0$, we consider the sector
$$S_{\theta,\rho}=\left\{z \in \mathbb{C}: \theta-\frac{\rho}{2} <\arg z < \theta+\frac{\rho}{2}\right\}.$$

An essential tool in proving our results will be a weighted form of 
 Phragmén–Lindelöf principle established in \cite{Neyt.et.al.-2025}. 
\begin{thm}\label{T:WPL}
Let $\sigma>0$ and let $w$ be a weight function satisfying 
the conditions $(\alpha)$ and $(\beta_{\sigma})$. 
Then following statements are equivalent.
\begin{enumerate}
\item w satisfies the condition $(\beta^*_{\sigma})$.

\item There exists a constant $A>0$ such that for any $\theta\in[0,2\pi)$,
$\rho\in (0, \pi/\sigma]$, $\lambda >0$ the following holds: 
there is a $B>0$ such that if $F$ is an analytic function on the sector 
$S_{\theta, \rho}$ with continuous extension to $\bar{S}_{\theta, \rho}$
for which 
$$
\abs{F(z)} \le M e^{\lambda w(\abs{z})}, \quad z\in \partial S_{\theta, \rho},
$$
for some $M=M_{\lambda}>0$, and 
$$
\abs{F(z)} \lesssim  e^{\epsilon \abs{z}^{\sigma}}, \quad 
z\in S_{\theta, \rho},
$$
for every $\epsilon>0$, then
$$
\abs{F(z)} \le BM e^{A \lambda w(\abs{z})}, \quad 
z\in S_{\theta, \rho}.
$$
\end{enumerate}
Furthermore, the constants $A$, and $B$ can be chosen in such a way that
$A$ depends on $w$ and $\sigma$, and $B$ depends on 
$w$, $\lambda$ and $\sigma$.
\end{thm}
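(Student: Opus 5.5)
The plan is to conjugate the sector to a half-plane and compare $F$ with the exponential of a harmonic majorant of $\lambda w(\abs{z})$. By rotating we may take $\theta=0$, so $S_{0,\rho}=\{\abs{\arg z}<\rho/2\}$. The map $\zeta=z^{\pi/\rho}$ sends $S_{0,\rho}$ onto the right half-plane $H$. It sends the boundary rays onto $i\R$, with $\abs{z}=\abs{\zeta}^{\rho/\pi}$. Since $\rho\le\pi/\sigma$, the growth hypothesis $\abs{F(z)}\lesssim e^{\epsilon\abs{z}^{\sigma}}$ becomes $e^{\epsilon\abs{\zeta}^{\sigma\rho/\pi}}$ with $\sigma\rho/\pi\le 1$. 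That is exactly the ``$\epsilon$-order one'' growth that the classical Phragmén–Lindelöf theorem on $H$ tolerates.

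\emph{(1)$\Rightarrow$(2).} Let $U_\lambda$ be the Poisson integral over $i\R$ of the boundary data $\lambda w(\abs{\tau}^{\rho/\pi})$, and pull it back to $S_{0,\rho}$. Condition $(\beta_\sigma)$, which follows from $(\beta^*_\sigma)$ by the lemma above, makes this integral converge, because the Poisson kernel decays like $\abs{\tau}^{-2}$ and $w(t^{\rho/\pi})=o(t^{\sigma\rho/\pi})$. Let $V_\lambda$ be a harmonic conjugate. Then $G=F e^{-(U_\lambda+iV_\lambda)}$ satisfies $\abs{G}\le M$ on $\partial S$. Moreover $U_\lambda\ge 0$ since $w\ge0$, so $G$ inherits the growth bound $e^{\epsilon\abs{z}^\sigma}$. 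The classical Phragmén–Lindelöf principle on $H$ then gives $\abs{G}\le M$ throughout the sector, hence $\abs{F(z)}\le Me^{U_\lambda(z)}$.

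The remaining and main step is the pointwise bound $U_\lambda(z)\le A\lambda w(\abs{z})+\lambda C'$, uniformly in $\arg z$ and in $\rho\in(0,\pi/\sigma]$. To prove it, write $\zeta=re^{i\phi}$ and split the Poisson integral into three regions: $\abs{\tau}\le 2r$, $2r\le\abs{\tau}$, and the part near $\tau\approx \pm ir$. On $\abs{\tau}\le 2r$ I use monotonicity of $w$ together with $(\alpha)$, which gives $w(2^{\rho/\pi}t)\le L'w(t)+C$, so the kernel's total mass $1$ yields a bound $O(w(\abs{z}))$. On $\abs{\tau}\ge 2r$ the kernel is bounded by $\lesssim r\cos\phi/\abs{\tau}^2$. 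After the substitution $\abs{\tau}=r s^{\pi/\rho}$, this tail becomes $\lesssim\int_1^\infty w(\abs{z}s)s^{-1-\pi/\rho}ds\le\int_1^\infty w(\abs{z}s)s^{-1-\sigma}ds$, which $(\beta^*_\sigma)$ bounds by $\frac{\pi}{2}(L-1)w(\abs{z})+C$. The uniformity as $\phi\to\pm\pi/2$ (near the boundary) is the delicate point. There the kernel concentrates near $\tau\approx \pm ir$, and monotonicity plus $(\alpha)$ again control it. This gives $A$ depending only on $w,\sigma$, and $B=e^{\lambda A C''}$ depending on $w,\lambda,\sigma$.

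\emph{(2)$\Rightarrow$(1).} I test (2) on a specific function. Take $\theta=0$, $\rho=\pi/\sigma$ (so $\zeta=z^\sigma$), and $F=e^{U_\lambda+iV_\lambda}$ with $U_\lambda$ as above. Then $\abs{F}=e^{\lambda w(\abs z)}$ on $\partial S$, so $M=1$. Also $U_\lambda(z)=o(\abs{z}^\sigma)$, since $w=o(t^\sigma)$ and the Poisson integral preserves $o$ of the first power on $H$; this verifies the growth hypothesis. Conclusion (2) then gives $U_\lambda(z)\le A\lambda w(\abs{z})+\log B$. Evaluate on the bisector $z=t>0$, where $\zeta=t^\sigma$ is real. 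Substituting $\abs{\tau}=t^\sigma s^\sigma$, the Poisson integral equals $\frac{2\lambda\sigma}{\pi}\int_0^\infty w(ts)\frac{s^{\sigma-1}}{1+s^{2\sigma}}ds$. For $s\ge1$ the kernel is $\ge \frac12 s^{-1-\sigma}$, so $\int_1^\infty w(ts)s^{-1-\sigma}ds\le \frac{\pi}{\sigma}(A w(t)+\lambda^{-1}\log B)$. Choosing $L$ with $\frac{\pi}{2}(L-1)=\pi A/\sigma$ gives $(\beta^*_\sigma)$. I expect the uniform harmonic-majorant estimate in the first direction to be the only genuinely technical step.
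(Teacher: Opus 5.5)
The paper does not prove this statement; it is quoted from Neyt--Toft--Vindas. Your sketch therefore has to stand on its own. Its architecture is the right one:
\begin{itemize}
\item map the sector to the half-plane by $\zeta=z^{\pi/\rho}$;
\item majorize $\lambda w(\abs{z})$ by the Poisson integral $U_\lambda$;
\item apply the classical Phragm\'en--Lindel\"of principle (critical order, type zero) to $Fe^{-(U_\lambda+iV_\lambda)}$;
\item for the converse, test (2) on $e^{U_\lambda+iV_\lambda}$ with $\rho=\pi/\sigma$ and read off $(\beta^*_\sigma)$ on the bisector.
\end{itemize}
That last computation is correct. The gap is in the key estimate of (1)$\Rightarrow$(2). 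You claim $U_\lambda(z)\le A\lambda w(\abs{z})+\lambda C'$ uniformly in $\rho\in(0,\pi/\sigma]$. Statement (2) requires this, since $A$ is fixed before $\rho$, but your tail bound does not deliver it. Set $p=\pi/\rho\ge\sigma$, $t=\abs{z}$, $r=t^{p}$, and use the kernel bound $4r\cos\phi/(\pi\tau^2)$ on $\abs{\tau}\ge 2r$. The substitution $\abs{\tau}=rs^{p}$ then produces exactly
\begin{equation*}
\frac{8\lambda\cos\phi}{\pi}\; p\int_{2^{1/p}}^{\infty} w(ts)\,s^{-1-p}\,ds .
\end{equation*}
So your ``$\lesssim$'' hides the Jacobian factor $p=\pi/\rho$, which is unbounded as $\rho\to0$. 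Dropping to $s^{-1-\sigma}$ and applying $(\beta^*_\sigma)$ then only gives $A$ of order $(L-1)/\rho$. Better bookkeeping of that inequality cannot repair this. For slowly varying $w$ the far part of the Poisson integral really is of size $\asymp \lambda w(t)$, while $\int_1^\infty w(ts)s^{-1-p}\,ds\approx w(t)/p$.

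The fix is to move the cut so that the near region absorbs the mass of $p\,s^{-1-p}\,ds$, which concentrates at $s\approx 1$ when $p$ is large. Take the near region to be $\abs{\tau}^{1/p}\le 2^{1/\sigma}t$, i.e.\ $s\le 2^{1/\sigma}$. There, monotonicity and iterated $(\alpha)$ give $w\le w(2^{1/\sigma}t)\le (2L)^{\lceil 1/\sigma\rceil}w(t)+C_\sigma$, against kernel mass at most $1$. On the remaining region one still has $\abs{\tau}\ge 2^{p/\sigma}r\ge 2r$, and
\begin{equation*}
p\,s^{-1-p}=p\,s^{-(p-\sigma)}\,s^{-1-\sigma}\le K_\sigma\, s^{-1-\sigma}\quad (s\ge 2^{1/\sigma}),\qquad K_\sigma=\sup_{p\ge\sigma}p\,2^{-(p-\sigma)/\sigma}<\infty .
\end{equation*}
After this, $(\beta^*_\sigma)$ yields an $A$ depending only on $L$ and $\sigma$.

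The point you single out as delicate is not. Both of your bounds are already uniform in $\arg z$: the near region $\abs{\tau}\le 2r$ contains $\tau\approx\pm ir$ and uses only monotonicity and unit mass. No third region is needed; the real uniformity issue is in $\rho$.

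Two smaller points.
\begin{itemize}
\item At the endpoint $\rho=\pi/\sigma$, convergence of $U_\lambda$ does not follow from $w(t^{\rho/\pi})=o(t)$ against a $\tau^{-2}$ kernel. It follows from $(\beta_\sigma)$ itself, after substituting $\abs{\tau}=u^{p}$.
\item Since $w$ is only non-decreasing, your test function $e^{U_\lambda+iV_\lambda}$ in (2)$\Rightarrow$(1) need not extend continuously to $\bar{S}_{0,\pi/\sigma}$; even continuous boundary data can have a discontinuous conjugate. First replace $w$ by a locally Lipschitz regularization such as $\tilde w(t)=\int_{t-1}^{t}w$, extending $w$ by $w(0)$ on $[-1,0)$. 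It satisfies $w(t-1)\le \tilde w(t)\le w(t)$, so by $(\alpha)$ it changes $(\beta^*_\sigma)$ only by constants.
\end{itemize}
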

\subsection{Tools from Harmonic analysis on the Heisenberg group.}
The short time Fourier transform of functions $f,g\in L^2(\R^d)$ can be
defined in the following way.
\begin{equation*}
V(f,g)(x+iy) = \frac{1}{(2\pi)^{d/2}} \int_{\R^d}
e^{i(x\cdot\xi +x\cdot y/2)} f(\xi+y) \bar{g(\xi)}\, d\xi. 
\end{equation*}
For $f\in L^2(\R^d)$ and $z\in \C^d$, the Bargmann transform is defined by
\begin{align} \label{BargT}
Bf(z)=\pi^{-\frac{n}{2}}e^{-\frac{1}{4}z^2}\int_{\R^n}f(\xi)e^{-\frac{1}{2}\xi^2}e^{z.\xi}\, d\xi.
\end{align}
It is well known fact that $B$ takes $L^2(\R^d)$ isometrically onto the Fock space consisting of all entire functions on $\C^d$, which are square integrable with respect to the measure $(4\pi)^{-\frac{d}{2}}e^{-\frac{1}{2}|z|^2}dz.$
Now the Taylor coefficient $c_\alpha$ of $Bf$ and the Hermite coefficient $\langle f,\bold{h}_\alpha\rangle$ of $f$ can be given by
\begin{equation}\label{E:H-C-relation}
\langle f,\bold{h}_\alpha\rangle=
\left( 2^{|\alpha|}\alpha!\pi^{\frac{d}{2}}\right)^\frac{1}{2}c_\alpha.
\end{equation}
An interesting and useful property of the Bargmann transform 
the following:
\begin{equation}\label{E:B-F-relation}
Bf(-iz)=B\hat{f}(z).
\end{equation}

Let $\nu>-1/2$. For $k\in\N$, the Laguerre polynomials of type $\nu$ 
denoted as $L_k^{\nu}$ are defined by 
\begin{equation*}
\sum_{k=0}^{\infty} L_k^{\nu}(x) e^{-x/2} r^k = 
(1-r)^{-\nu-1} e^{-\frac{1}{2}\frac{1+r}{1-r}x}, 
\end{equation*}
when $\abs{r}<1$, and $x>0$. Let $\psi_k^{\nu}$ be the 
Laguerre functions defined by
\begin{equation*}
\psi_k^{\nu}(s) = L_k^{\nu}(s^2) e^{-\frac{s^2}{2}}.
\end{equation*}
Let $\phi_k^{d-1}(z)$ denotes the Laguerre function 
$L_k^{d-1}(\frac{\abs{z}^2}{2})e^{-\frac{\abs{z}^2}{4}}$.

As in \cite{Garg2009}, the formula
\begin{equation}\label{E:L-coeff}
\norm{P_kf}_2^2 = 
\frac{1}{(2\pi)^{d/2}} \int_{\C^d} V(f,f)(z) \phi_k^{d-1}(z)\, dz
\end{equation}
will be crucial for us as well. 

The usual Bessel functions can be defined by
\begin{equation*}
J_{\nu} (z) = \frac{(\frac{z}{2})^{\nu}}{\Gamma(\nu+1/2)\Gamma(1/2)}
\int_{-1}^1 e^{izs} (1-s^2)^{\nu-1/2}\, ds,~z\in \mathbb{C}. 
\end{equation*}
The following identity is well-known
\begin{equation*}
J_{\nu} (z) = \(\frac{z}{2}\)^{\nu} \sum_{k=0}^{\infty} 
\frac{(-1)^{k}}{\Gamma(k+1)\Gamma(k+\nu+1)} \(\frac{z}{2}\)^{2k}.
\end{equation*}
The Hankel transform of 
$f\in L^1(\R^+, r^{2\nu +1} dr)$ is defined by 
\begin{equation*}
H_{\nu}f(s) = \int_0^{\infty} f(r) \frac{J_{\nu}(sr)}{(sr)^{\nu}}
r^{2\nu +1}\, dr.
\end{equation*} 
It is known that $H_{\nu}\psi_k^{\nu}(s) = (-1)^k \psi_k^{\nu}(s)$.

In \cite{Cholewinski1984}, Cholewinski studied the generalized 
Fock space which is a Hilbert space of even entire functions on $\C$
with the inner product
\begin{equation*}
\< f, g\>_{\nu} = \int_{\C} f(z) \bar{g}(z)\, dm_{\nu}(z),
\end{equation*}
where $dm_{\nu}(z) = \frac{2^{\nu}}{\pi}
\(\frac{\abs{z}^2}{2}\)^{2\nu+1} K_{\nu+1/2}\(\frac{\abs{z}^2}{2}\)$,
and $K_{\nu}$ is the modified Bessel function of third kind. 
From \cite[Theorem 5.6]{Cholewinski1984}, we have that the following
transformation maps $L^2(\R^+, r^{2\nu +1} dr)$ to $F_{\nu}$ unitarily.
\begin{equation*}
U_{\nu}f(z) = e^{-z^2/4} \int_{0}^{\infty} 
f(r) \frac{J_{\nu}(izr)}{(izr)^{\nu}}
e^{-r^2/2}r^{2\nu +1}\, dr.
\end{equation*}
Note that, in comparison with \cite{Cholewinski1984}, we have tweaked 
the definitions a bit. This is to make things compatible with our other
definitions. With these definitions, we have (See, equation (5.20) in \cite{Cholewinski1984})
\begin{equation*}
U_{\nu} \(\(2\frac{\Gamma(k+1)}{\Gamma(k+\nu+1)}\)^{1/2}
\psi_k^{\nu}\)(z) = \frac{(-1)^k z^{2k}}
{\sqrt{2^{1+2\nu+4k}\Gamma(k+1)\Gamma(k+\nu+1)}}.
\end{equation*}
By these properties, we can write the series expansion of the images of
functions of $L^2(\R^+, r^{2\nu +1} dr)$ under $U_{\nu}$ as
\begin{equation}\label{E:U-exp}
U_{\nu}f(z) = \sum_{k=0}^{\infty} \frac{(-1)^k}
{2^{\nu+2k}\Gamma(k+\nu+1)}\< f, \psi_k^{\nu} \> z^{2k},~z \in \mathbb{C}.
\end{equation}
The following relation, which follows from the fact that 
$H_{\nu}\psi_k^{\nu}(s) = (-1)^k \psi_k^{\nu}(s)$,
will be useful for us.
\begin{equation}\label{E:U-rotation}
U_{\nu}H_{\nu}f(z) = U_{\nu}f(-iz).
\end{equation}

\section{Motivation and the Hardy class} \label{S:Moto}
Our main motivation stems from the Gaussian decay exhibited by 
functions and their Fourier transforms, a phenomenon that has been
studied since the time of G. H. Hardy. A precise question that has 
attracted considerable attention in recent years is: to what extent
do we understand nontrivial functions that exhibit this behavior?
Furthermore, what can be said about such functions when they 
evolve according to an evolution equation, such as the 
Schrödinger equation?
The study of such questions also plays a crucial role in understanding 
certain Gelfand-Shilov spaces. 

Motivated by the developments in these directions, 
we consider allowing a controlled amount of growth on both sides, 
namely, on function and its Fourier transform sides, and then revisit 
the above questions. 
The significance of this perspective becomes apparent when one 
permits logarithmic-type growth in the exponential bounds. 
This setting naturally leads to the Pilipović spaces, which are 
closely related to the study of powers of the harmonic oscillator.

One of our main results, Theorem \ref{T:P-evolution}, is inspired by the
analysis carried out in \cite{Radchenko-Ramos-2025} for the Gaussian case.
As a starting point, their approach relies on decay estimates for Hermite
coefficients, drawn from \cite{Vemuri2008hermite}. In our setting, 
we likewise establish a corresponding decay rate for Hermite coefficients.
Another key ingredient in the proof of their principal result is the following asymptotic formula.
\begin{thm}\label{T:GWHE}
Let $\kappa >0$ and $\beta\in \R$.
Then the following estimate holds for every $y>0$, 
and for all $x\in \R\setminus[-1,1]$.
\begin{equation*}
\sum_{n\ge 1} \frac{e^{-\kappa y n}}{n^{\beta}}
\abs{h_n(x)}^{\kappa} 
\lesssim_{y,\kappa,\beta} 
\abs{x}^{1-\frac{\kappa}{2}-2\beta}
e^{- \kappa x^2\tanh(y)/2}.
\end{equation*}
Moreover, the above bound is sharp.
\end{thm}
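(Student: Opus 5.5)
We need to bound $S(x)=\sum_{n\ge1} n^{-\beta}e^{-\kappa y n}|h_n(x)|^\kappa$ for $|x|>1$. The plan is to split the sum according to where $x$ sits relative to the turning point $\sqrt{2n+1}$ of $h_n$. We use the standard uniform (Plancherel–Rotach type) bounds for Hermite functions. For $x^2\le (2n+1)(1-\delta)$ (oscillatory region) one has $|h_n(x)|\lesssim n^{-1/4}$, and near the turning point $|h_n(x)|\lesssim n^{-1/12}$. In the exponential region $x^2> 2n+1$ we use
\begin{equation*}
|h_n(x)|\lesssim (x^2-2n-1)^{-1/4}\exp\Big(-\tfrac{x}{2}\sqrt{x^2-2n-1}+\tfrac{2n+1}{2}\,\mathrm{arccosh}\tfrac{x}{\sqrt{2n+1}}\Big),
\end{equation*}
or equivalently the Mehler kernel bound. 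A clean way to produce the $\tanh$ is Mehler's formula with $r=e^{-2t}$:
\begin{equation*}
\sum_n r^n h_n(x)^2=\frac{1}{\sqrt{\pi(1-r^2)}}\,e^{-x^2\frac{1-r}{1+r}}=\frac{1}{\sqrt{\pi(1-r^2)}}\,e^{-x^2\tanh t}.
\end{equation*}
This already gives the case $\kappa=2$, $\beta=0$ with the correct exponent $x^2\tanh(y)/2\cdot\kappa$.

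For general $\kappa$, we estimate termwise using the exponential-region asymptotics. We set $n=\mu x^2/2$ and consider the phase
\begin{equation*}
\Phi(\mu)=\kappa\Big[y\mu+\tfrac12\sqrt{1-\mu}-\tfrac{\mu}{2}\,\mathrm{arccosh}(\mu^{-1/2})\Big]\frac{x^2}{2}.
\end{equation*}
Minimizing over $\mu\in(0,1)$ gives the stationary condition $2y=\mathrm{arccosh}(\mu^{-1/2})$, i.e.\ $\mu=\mathrm{sech}^2(2y)$. Substituting back yields a minimal value of $\kappa\tanh(y)x^2/2$; this is exactly the Legendre-transform identity behind Mehler's formula. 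The terms with $n\ge x^2/2$ carry the factor $e^{-\kappa y n}\le e^{-\kappa y x^2/2}$, which is smaller since $\tanh y<y$, and they are summable with polynomial loss.

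The polynomial prefactor then comes from a Laplace-method sum around the critical index $n_0=\mu x^2/2\asymp x^2$. The weight contributes $n_0^{-\beta}\asymp x^{-2\beta}$, the amplitude $|h_n|^\kappa\approx (x^2-2n)^{-\kappa/4}\asymp x^{-\kappa/2}$, and the Gaussian width in $n$ of the sum is $\asymp \sqrt{x^2}=|x|$, since $\Phi''$ in $n$ is of order $1/x^2$. Multiplying these gives $|x|^{1-\kappa/2-2\beta}e^{-\kappa x^2\tanh(y)/2}$. Sharpness follows by bounding the sum below by the terms with $|n-n_0|\le |x|$, where the lower asymptotics of $h_n$ are valid.

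The main obstacle is uniformity. We need error control in the Plancherel–Rotach asymptotics uniformly over all $n$ in the exponential region, including $n$ near the turning point $x^2/2$ and small $n$ (for $|x|>1$), so that the Laplace summation is rigorous. I would handle this by using Mehler-type upper bounds (monotone convexity of the phase in $n$) away from $n_0$, where crude bounds suffice thanks to the strictly convex phase. The sharp asymptotics are then only needed in a window $|n-n_0|\le C|x|\log|x|$.
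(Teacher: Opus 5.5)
Your strategy is the right one. The paper does not prove this statement itself; it quotes it from \cite{Radchenko-Ramos-2025} and reproduces that argument in its proof of Theorem \ref{T:WHE}. That template is: Plancherel--Rotach in the exponential region, a phase in $n$ with second derivative $\le -c/x^2$, a maximum at the critical index, and a Taylor expansion to a discrete Gaussian of width $\asymp\abs{x}$. It gives exactly your count $\abs{x}\cdot x^{-\kappa/2}\cdot x^{-2\beta}$.

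There is, however, a factor-of-two slip in your phase. From your own exponential-region bound with $n=\mu x^2/2$ one gets $\frac{x}{2}\sqrt{x^2-2n-1}\approx\frac{x^2}{2}\sqrt{1-\mu}$ and $\frac{2n+1}{2}\,\mathrm{arccosh}\frac{x}{\sqrt{2n+1}}\approx\frac{x^2}{2}\,\mu\,\mathrm{arccosh}(\mu^{-1/2})$. Hence
\begin{equation*}
\Phi(\mu)=\kappa\frac{x^2}{2}\Big[y\mu+\sqrt{1-\mu}-\mu\,\mathrm{arccosh}(\mu^{-1/2})\Big],\qquad
\Phi'(\mu)=\kappa\frac{x^2}{2}\Big[y-\mathrm{arccosh}(\mu^{-1/2})\Big].
\end{equation*}
The critical point is $\mu_0=\mathrm{sech}^2y$, i.e.\ $n_0=\frac{x^2}{2}\mathrm{sech}^2y$, not $\mathrm{sech}^2(2y)$; this matches $\phi_{n_c}\approx y$ in the paper's computation. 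At that point $\Phi(\mu_0)=\kappa\frac{x^2}{2}\tanh y$. With the factors $\frac12$ you wrote, the minimum would be $\kappa\frac{x^2}{4}\tanh 2y<\kappa\frac{x^2}{2}\tanh y$, so your stated value does not follow from your own $\Phi$. This is a computational slip, not a structural problem.

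Where you genuinely differ is in how you secure uniformity. The Radchenko--Ramos route cuts at $N\approx cx^2/2$ with $c<1$ and bounds every $n\ge N$ by $\abs{h_n}\le\pi^{-1/4}$. For that tail to be negligible one needs $cy>\tanh y$. Such a $c$ automatically exceeds $\mathrm{sech}^2y$, because $\sinh 2y>2y$, so $n_0$ lies below the cut. This avoids the turning point entirely, and your oscillatory and Airy-type bounds are then unnecessary.

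For $n<N$, the paper's write-up applies Plancherel--Rotach on the whole range. With fixed $\epsilon\le\phi\le\omega$, that formula does not literally cover $n=o(x^2)$. Your Mehler idea fills this gap cleanly. Mehler gives $h_n(x)^2\le e^{2tn}(\pi(1-e^{-4t}))^{-1/2}e^{-x^2\tanh t}$; choosing $\cosh t=\abs{x}/\sqrt{2n}$ yields, uniformly for $1\le n\le cx^2/2$,
\begin{equation*}
e^{-yn}\abs{h_n(x)}\lesssim_c e^{A(n)},\qquad A(n)=n\,\mathrm{arccosh}\tfrac{\abs{x}}{\sqrt{2n}}-\tfrac{\abs{x}}{2}\sqrt{x^2-2n}-yn.
\end{equation*}
Since $A'(n)=\mathrm{arccosh}\frac{\abs{x}}{\sqrt{2n}}-y$ and $A''(n)\le-1/x^2$, this gives $A(n)\le-\frac{x^2}{2}\tanh y-\frac{(n-n_0)^2}{2x^2}$.

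This bound loses the amplitude $(x^2-2n)^{-1/4}$, but that does not matter:
\begin{itemize}
\item Outside the window $\abs{n-n_0}\le C\abs{x}\log\abs{x}$, the Gaussian factor absorbs all polynomial losses.
\item Inside the window, $\phi_n$ stays in a compact subset of $(0,\infty)$. There Plancherel--Rotach with fixed $\epsilon,\omega$ gives both the sharp upper bound and the matching lower bound.
\end{itemize}

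So your route is more self-contained on the small-$n$ range. The paper's is shorter because it defers those details to \cite{Radchenko-Ramos-2025}.
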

We establish an analogue of this result, tailored to our setting, 
Theorem \ref{T:WHE}, which plays a key role in proving 
our result, Theorem \ref{T:P-evolution}.
Then, we note a direct implication of the above result. 
\begin{thm}\label{T:GWHE}
Let $\kappa >0$ and $\beta\in \R$.
Then the following estimate holds for every $y>0$, 
and for all $x\in \R^d\setminus[-1,1]^d$.
\begin{equation*}
\sum_{\abs{\alpha}\ge 1} \frac{e^{-\kappa y \abs{\alpha}}}
{\alpha^{\beta}}
\abs{\bold{h}_{\alpha}(x)}^{\kappa} 
\lesssim_{y,\kappa,\beta} 
\prod_{i=1}^d\abs{x_i}^{1-\frac{\kappa}{2}-2\beta}
e^{- \kappa \abs{x}^2\tanh(y)/2},
\end{equation*}
where $\alpha^{\beta}$ stands for $\alpha_1^{\beta}\cdot\alpha_2^{\beta}\dots
\alpha_d^{\beta}$.
\end{thm}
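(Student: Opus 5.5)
The plan is to reduce the $d$-dimensional sum to a product of one-dimensional sums and then apply the preceding one-dimensional estimate (the first Theorem \ref{T:GWHE}) in each coordinate.

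\textbf{Factorization.} Since $\bold{h}_\alpha(x)=\prod_i h_{\alpha_i}(x_i)$, $e^{-\kappa y\abs{\alpha}}=\prod_i e^{-\kappa y\alpha_i}$ and $\alpha^\beta=\prod_i\alpha_i^\beta$, every term of the sum is a product of one-dimensional factors. We first interpret the index set so that $\alpha^{\beta}$ makes sense, i.e.\ $\alpha_i\ge1$ for all $i$. If one wants to allow some $\alpha_i=0$, we simply use the convention $0^\beta:=1$ in those coordinates, and the zero-index contribution is handled separately below. With this convention the full sum is bounded by
\begin{equation*}
\prod_{i=1}^d\Big(\abs{h_0(x_i)}^\kappa+\sum_{n\ge1}\frac{e^{-\kappa y n}}{n^\beta}\abs{h_n(x_i)}^\kappa\Big).
\end{equation*}
All terms are nonnegative, so Tonelli justifies the interchange of summation and product.

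\textbf{One-dimensional estimate.} For each $i$, the hypothesis $x\in\R^d\setminus[-1,1]^d$ should be read (as in the product form of the right-hand side) as $\abs{x_i}>1$ for every $i$. The first Theorem \ref{T:GWHE} then gives
\begin{equation*}
\sum_{n\ge1}\frac{e^{-\kappa y n}}{n^\beta}\abs{h_n(x_i)}^\kappa\lesssim_{y,\kappa,\beta}\abs{x_i}^{1-\frac{\kappa}{2}-2\beta}e^{-\kappa x_i^2\tanh(y)/2}.
\end{equation*}
The zero term $\abs{h_0(x_i)}^\kappa=\pi^{-\kappa/4}e^{-\kappa x_i^2/2}$ is dominated by the same right-hand side, because $\tanh y<1$ and the Gaussian gap $e^{-\kappa x_i^2(1-\tanh y)/2}$ absorbs any polynomial factor $\abs{x_i}^{-(1-\kappa/2-2\beta)}$ on $\abs{x_i}>1$, with a constant depending on $y,\kappa,\beta$.

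\textbf{Assembly.} Multiplying the $d$ one-dimensional bounds and using $\sum_i x_i^2=\abs{x}^2$ gives
\begin{equation*}
\prod_{i=1}^d\abs{x_i}^{1-\frac{\kappa}{2}-2\beta}\,e^{-\kappa\abs{x}^2\tanh(y)/2},
\end{equation*}
which is the claim. The only real point requiring care is this bookkeeping: the meaning of $\alpha^\beta$ when some $\alpha_i=0$, and the requirement that every coordinate satisfy $\abs{x_i}>1$. If the intended region were instead merely $x\notin[-1,1]^d$, coordinates with $\abs{x_i}\le1$ would be bounded by the uniform boundedness of $h_n$ together with the convergence of $\sum_n e^{-\kappa yn}n^{-\beta}$. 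In that reading the factor $\abs{x_i}^{\cdots}e^{-\kappa x_i^2\tanh(y)/2}$ is comparable to a constant on $[-1,1]$ only after excluding $x_i$ near $0$ when the exponent is negative, so I would adopt the coordinatewise interpretation.
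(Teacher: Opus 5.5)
Your argument is correct and is the one the paper intends when it calls this a "direct implication" of the one-dimensional Radchenko--Ramos estimate: bound the sum by the full product of one-dimensional sums over $\N_0$, absorb the $h_0$ terms as you do, and apply the one-dimensional bound in each coordinate; the paper gives no more detail than that. Your coordinatewise reading $\abs{x_i}>1$ for all $i$ is in fact forced, not just convenient. When $1-\frac{\kappa}{2}-2\beta>0$, the right-hand side vanishes at points such as $(x_1,0,\dots,0)$, while the left-hand side is positive there (e.g.\ the term with $\alpha=(1,2,\dots,2)$). So in your closing remark the problematic case is a positive exponent, not a negative one.
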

Which combining with \cite[Theorem 4.7]{garg2012structure}, gives us 
a considerably strong result.
\begin{thm} \label{T:Gaussian-evolution}
Let $u(x,t)$ be the solution of problem (\ref{E:HOSE}).
Then, if $u_0\in E^d_{\mathcal{F}}(\tanh 2\gamma)$, for some $\gamma>0,$ we have
\begin{equation}\label{E:GDHOHD}
\abs{u(x,t)} \lesssim \abs{x}^{\frac{d-1}{2d}}
e^{-\tanh \(\frac{\gamma}{d}\) \frac{\abs{x}^2}{2}},
\quad t\in\R
\end{equation} 
and the same estimate holds for $\hat{u}(x,t)$
for all $x\in \R^d$.
\end{thm}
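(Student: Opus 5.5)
We will expand $u$ in Hermite functions, bound the coefficients using \cite[Theorem 4.7]{garg2012structure}, and then sum the resulting series with the $d$-dimensional form of Theorem \ref{T:GWHE}.

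\textbf{Step 1: Hermite expansion of the solution.} Since $\bold{h}_\alpha$ is an eigenfunction of $-\Delta+\abs{x}^2$ with eigenvalue $2\abs{\alpha}+d$, the solution of (\ref{E:HOSE}) is
\begin{equation*}
u(x,t)=\sum_{\alpha} e^{i(2\abs{\alpha}+d)t}\<u_0,\bold{h}_\alpha\>\bold{h}_\alpha(x).
\end{equation*}
Therefore, uniformly in $t\in\R$,
\begin{equation*}
\abs{u(x,t)}\le \sum_\alpha \abs{\<u_0,\bold{h}_\alpha\>}\,\abs{\bold{h}_\alpha(x)}.
\end{equation*}
Because $\hat{\bold{h}}_\alpha=(-i)^{\abs{\alpha}}\bold{h}_\alpha$, the function $\hat u(\cdot,t)$ has the same expansion with coefficients of equal modulus. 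The same bound therefore holds for $\hat u$.

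\textbf{Step 2: coefficient decay.} For $u_0\in E^d_{\mathcal F}(\tanh 2\gamma)$ we invoke \cite[Theorem 4.7]{garg2012structure}, the Garg–Thangavelu extension of Vemuri's theorem. I expect it to give a bound of the form
\begin{equation*}
\abs{\<u_0,\bold{h}_\alpha\>}\lesssim \abs{\alpha}^{\frac{d-1}{2}}e^{-\gamma\abs{\alpha}}
\end{equation*}
(up to a polynomial factor in $\abs{\alpha}$).

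This is the main obstacle. The exact form of that theorem fixes the exponent of $\abs{\alpha}$ and the rate $\gamma$, possibly with some loss in $d$, and these determine the final constants. The rate in (\ref{E:GDHOHD}) is $\tanh(\gamma/d)$ rather than $\tanh\gamma$. This suggests the available coefficient bound is effectively of the form $e^{-\frac{\gamma}{d}\abs{\alpha}}$ once one converts between $\abs{\alpha}$ and a product structure. It could also come from using $\abs{\alpha}\ge \max_i\alpha_i$, or from splitting $e^{-\gamma\abs{\alpha}}$ over the $d$ coordinates.

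\textbf{Step 3: converting polynomial factors.} In either case, I will write $\abs{\alpha}^{\frac{d-1}{2}}\le \prod_i (1+\alpha_i)^{\frac{d-1}{2}}$. This brings the bound into the form $\sum_\alpha e^{-y\abs{\alpha}}\alpha^{-\beta}\abs{\bold{h}_\alpha(x)}$, with $\kappa=1$, $y=\gamma/d$ (or $y=\gamma$) and a suitable $\beta$. Terms with some $\alpha_i=0$ are handled by applying the lower-dimensional version to the remaining coordinates, using $\abs{h_0}\lesssim e^{-x_i^2/2}$, which is better than needed.

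\textbf{Step 4: summing the series.} The $d$-dimensional form of Theorem \ref{T:GWHE} then gives
\begin{equation*}
\abs{u(x,t)}\lesssim \prod_i\abs{x_i}^{\frac12-2\beta}\,e^{-\tanh(y)\abs{x}^2/2}
\end{equation*}
for $x$ outside $[-1,1]^d$. The exponent $-2\beta$ is chosen so that, after the AM–GM bound $\prod_i\abs{x_i}^{\frac{d-1}{2d}\cdot}\lesssim \abs{x}^{\frac{d-1}{2d}}$, the polynomial prefactor becomes $\abs{x}^{\frac{d-1}{2d}}$.

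\textbf{Step 5: remaining regions.} Where some $\abs{x_i}\le1$, that coordinate's one-dimensional factor is bounded by a constant, which is compatible with the claim. On a compact set the bound is trivial, since $\abs{u}\lesssim \sum\abs{\<u_0,\bold{h}_\alpha\>}<\infty$ because $\norm{\bold{h}_\alpha}_\infty\le 1$.
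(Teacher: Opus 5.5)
Your skeleton (Hermite expansion, coefficient decay from \cite[Theorem 4.7]{garg2012structure}, then the $d$-dimensional form of Theorem \ref{T:GWHE}) is exactly the paper's one-line argument. Step 1 and your treatment of $\alpha_j=0$ and $\abs{x_j}\le 1$ are fine. The gap is Step 2, which carries all the quantitative content: you leave it open, and the bound you expect there is not available.

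A rate $e^{-\gamma\abs{\alpha}}$ with only polynomial loss for general $u_0\in E^d_{\mathcal F}(\tanh 2\gamma)$ is essentially the Vemuri conjecture mentioned in the abstract. The paper reaches it only up to the factor $d^{\abs{\alpha}}$ in Theorem \ref{T:coeff-decay}. Indeed, if you had such a bound, your summation would give the Gaussian rate $\tanh\gamma$ rather than $\tanh(\gamma/d)$.

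The missing idea is the slicing argument, reused in the proof of Theorem \ref{T:P-coeff}(2). Fix $j$, let $\mu$ be the other $d-1$ indices, and set $f_\mu(t)=\int_{\R^{d-1}}u_0(x',t)\mathbf{h}_\mu(x')\,dx'$. Using $\norm{\mathbf{h}_\mu}_\infty\le 1$ and Plancherel in $x'$, both $f_\mu$ and $\hat{f}_\mu$ obey the one-dimensional bound $Ce^{-\tanh(2\gamma)t^2/2}$ with $C$ independent of $\mu$. Since $\langle u_0,\mathbf{h}_\alpha\rangle=\langle f_\mu,h_{\alpha_j}\rangle$, Vemuri's one-dimensional estimate gives $\abs{\langle u_0,\mathbf{h}_\alpha\rangle}\lesssim(1+\alpha_j)^{-1/4}e^{-\gamma\alpha_j}$ for every $j$. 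Taking the geometric mean over $j$ yields
\begin{equation*}
\abs{\langle u_0,\mathbf{h}_\alpha\rangle}\lesssim\prod_{j=1}^d(1+\alpha_j)^{-\frac{1}{4d}}e^{-\frac{\gamma}{d}\alpha_j}.
\end{equation*}
It is this bound that fixes $y=\gamma/d$, $\kappa=1$, $\beta=\frac{1}{4d}$ in Theorem \ref{T:GWHE}, and hence the exponent $1-\frac{\kappa}{2}-2\beta=\frac{d-1}{2d}$.

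Your bookkeeping in Steps 3--4 also fails. The weight $\beta$ is dictated by the coefficient bound; you cannot choose it. With your factor $\prod_i(1+\alpha_i)^{\frac{d-1}{2}}$ and $y=\gamma/d$ you get $\beta=-\frac{d-1}{2}$ and the prefactor $\prod_i\abs{x_i}^{d-\frac12}$. Moreover, the claimed inequality $\prod_i\abs{x_i}^{\frac{d-1}{2d}}\lesssim\abs{x}^{\frac{d-1}{2d}}$ is false for $d\ge 2$: at $x=(t,\dots,t)$ the left side is $t^{\frac{d-1}{2}}$. AM--GM only gives $\lesssim\abs{x}^{\frac{d-1}{2}}$, and the difference cannot be absorbed without lowering the Gaussian rate.

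What the argument actually proves, the paper's combination included, is obtained by factorizing the sum coordinatewise:
\begin{equation*}
\abs{u(x,t)}\lesssim\prod_{i=1}^d\max(1,\abs{x_i})^{\frac{d-1}{2d}}\,e^{-\tanh(\gamma/d)\frac{\abs{x}^2}{2}},\quad t\in\R,
\end{equation*}
and likewise for $\hat{u}$. The prefactor in (\ref{E:GDHOHD}) has to be read in this per-coordinate sense. The $\max(1,\cdot)$ is needed too, because $\abs{x}^{\frac{d-1}{2d}}$ vanishes at $x=0$. So boundedness of $u$ on compact sets does not make the literal bound ``trivial'' there, contrary to your Step 5.
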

This result motivates us to derive second part of Theorem \ref{T:P-coeff}
and see the analogous result, Theorem \ref{T:P-evolution-HD}, in our setting.
In \cite{kulikov2023gaussian}, under the same hypotheses of previous 
theorem, Kulikov-Oliveira-Ramos
have proved a better estimate in comparison with the estimate 
(\ref{E:GDHOHD}), for all but except a discrete set of time. 
Here, we prove an improved decay rate 
of Hermite coefficients of functions from 
$f\in E^d_{\mathcal{F}}(\tanh 2\gamma)$, 
when $\gamma \ge \frac{d}{d-1} \log d$, 
which combining with Theorem \ref{T:GWHE}, gives us an
improved Gaussian decay than that provided by \eqref{E:GDHOHD}.
\begin{thm}\label{T:coeff-decay}
    If $f\in E^d_{\mathcal{F}}(\tanh 2\gamma)$ for some $\gamma>0$, then 
    $$\left|\langle f,\bold{h}_\alpha\rangle\right| \lesssim_{\gamma,d} d^{|\alpha|}|\alpha|^{\frac{d-2}{4}}e^{-\gamma|\alpha|}.$$  
\end{thm}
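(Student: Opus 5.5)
The plan is to work on the Bargmann side and recover the Hermite coefficients from the Taylor coefficients of $Bf$ via \eqref{E:H-C-relation}. We use the Cauchy estimates on a polydisc, so the whole task is to produce a good growth bound for $Bf$ on $\C^d$. Write $a=\tanh 2\gamma$. The one-dimensional mechanism, due to Vemuri (see \cite{Vemuri2008hermite}), is the target: for $d=1$ one shows $\abs{Bf(z)}\lesssim e^{e^{-2\gamma}\abs{z}^2/4}$ up to polynomial factors. Then Cauchy's inequality with the optimal radius gives $\abs{c_n}\lesssim (e^{1-2\gamma}/(2n))^{n/2}$. Multiplying by $(2^n n!\sqrt{\pi})^{1/2}$ and using Stirling yields $\abs{\<f,h_n\>}\lesssim n^{1/4}e^{-\gamma n}$. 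The higher-dimensional statement should follow the same route, with the factor $d^{\abs{\alpha}}$ paying for the passage from complex lines to the full polydisc.

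\textbf{Step 1 (bounds on real and imaginary directions).} From $\abs{f(x)}\lesssim e^{-a\abs{x}^2/2}$, a direct Gaussian integral in \eqref{BargT} gives, for $x\in\R^d$,
\[
\abs{Bf(x)}\lesssim e^{\frac{\abs{x}^2}{4}\cdot\frac{1-a}{1+a}}=e^{e^{-4\gamma}... }.
\]
More precisely, the exponent is $\frac{\abs{x}^2}{4}\cdot\frac{1-a}{1+a}=\frac{\abs{x}^2}{4}e^{-4\gamma}$. The same bound holds on $i\R^d$ by \eqref{E:B-F-relation} and the hypothesis on $\hat f$. Everywhere on $\C^d$ we have the trivial Fock-space bound $\abs{Bf(z)}\lesssim e^{\abs{z}^2/4}$.

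\textbf{Step 2 (Phragm\'en--Lindel\"of on complex lines).} Fix a real unit vector $\omega\in S^{d-1}$ and set $F_\omega(\zeta)=Bf(\zeta\omega)$ for $\zeta\in\C$. This is an entire function of order $2$ satisfying the bounds of Step 1 on $\R$ and on $i\R$. Apply the classical Phragm\'en--Lindel\"of argument in each quadrant to $F_\omega(\zeta)e^{-\beta\zeta^2}$ for a suitable complex $\beta$ (Vemuri's one-variable argument, or the case $w\equiv0$ of Theorem \ref{T:WPL} after rescaling). This interpolates between the axes and should give
\[
\abs{F_\omega(\zeta)}\lesssim e^{\frac{e^{-2\gamma}}{4}\abs{\zeta}^2},
\]
uniformly in $\omega$.

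\textbf{Step 3 (from lines to the polydisc).} This is the step I expect to be the main obstacle. The Cauchy estimate for $c_\alpha$ needs $\sup\abs{Bf}$ over the torus $\{\abs{z_j}=r_j\}$, whose points are generally not of the form $\zeta\omega$ with $\omega$ real. I would first try to average over the phases. Write
\[
c_\alpha=(2\pi)^{-d}\int_{\mathbb{T}^d}Bf(r_1e^{i\theta_1},\dots)\,r^{-\alpha}e^{-i\alpha\cdot\theta}\,d\theta,
\]
and substitute $z_j=\zeta\,\omega_j e^{i(\theta_j-\theta_1)}$, so that all coordinates share a single complex variable $\zeta$. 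Crude alternatives are to bound via $\abs{z}_{\ell^1}^2\le d\abs{z}^2$, or to use the multinomial bound $\abs{\alpha}!/\alpha!\le d^{\abs{\alpha}}$ when comparing the homogeneous part $\sum_{\abs{\alpha}=k}c_\alpha z^\alpha$ restricted to real lines with individual coefficients. Either route should cost a factor of at most $d^{\abs{\alpha}}$ (possibly $d^{\abs{\alpha}/2}$), which is exactly the loss allowed in the statement. Concretely, I would bound the degree-$k$ homogeneous polynomial $Q_k(\omega)=\sum_{\abs{\alpha}=k}c_\alpha\omega^\alpha$ on $S^{d-1}$ by Step 2 and a one-variable Cauchy estimate in $\zeta$. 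Then I would extract $c_\alpha$ from $Q_k$ through its $L^2(S^{d-1})$ norm, using that the normalized monomials have comparable norms up to $\abs{\alpha}!/\alpha!$.

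\textbf{Step 4 (conclusion).} With $\abs{c_\alpha}\lesssim d^{\abs{\alpha}}(e^{1-2\gamma}/(2\abs{\alpha}))^{\abs{\alpha}/2}\cdot(\text{poly})$ in hand, multiply by $(2^{\abs{\alpha}}\alpha!\pi^{d/2})^{1/2}$ and apply Stirling to $\alpha!\le\abs{\alpha}!$. The polynomial factor $\abs{\alpha}^{(d-2)/4}$ should come from the dimension of the space of spherical harmonics of degree $\abs{\alpha}$ (equivalently the $L^2(S^{d-1})$ normalization), as in the Garg--Thangavelu projection estimates. This yields $\abs{\<f,\bold{h}_\alpha\>}\lesssim_{\gamma,d}d^{\abs{\alpha}}\abs{\alpha}^{\frac{d-2}{4}}e^{-\gamma\abs{\alpha}}$.
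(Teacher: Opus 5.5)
Your architecture matches the paper's: restrict $Bf$ to the lines $\zeta\omega$ with $\omega\in S^{d-1}$ real, run Vemuri's one-variable Phragm\'en--Lindel\"of argument, bound the degree-$k$ homogeneous part $Q_k$ on $S^{d-1}$, and pay $d^{\abs{\alpha}}$ to reach individual coefficients. However, Step 3, which you flag as the obstacle, is left without a tool that works.

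\textbf{Step 3.} The phase substitution $z_j=\zeta\omega_j e^{i(\theta_j-\theta_1)}$ produces a \emph{complex} direction vector, and Step 2 says nothing about those. Nor does $L^2(S^{d-1})$ give a Parseval identity for the $c_\alpha$, because monomials are not orthogonal on the real sphere; for instance $(x_1^2+\dots+x_d^2)^m\equiv 1$ there. The paper uses Kellogg's inequality \cite[Theorem II]{kellogg1928}: for $Q$ homogeneous of degree $k$, $\sup_{S^{d-1}}\abs{\nabla Q}\le k\sup_{S^{d-1}}\abs{Q}$. Iterating $k$ times gives $\alpha!\,\abs{c_\alpha}=\abs{\partial^\alpha Q}\le k!\sup_{S^{d-1}}\abs{Q}$, hence $\abs{c_\alpha}\le\frac{k!}{\alpha!}\sup_{S^{d-1}}\abs{Q}\le d^k\sup_{S^{d-1}}\abs{Q}$.

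\textbf{Step 2.} This step also does not follow from the inputs you list. A quadrant is the critical opening for order $2$. Put $\mu=e^{-4\gamma}$ and take $e^{-ic\zeta^2}$ with $\sqrt{\mu}/4<c\le 1/4$: it has modulus $1$ on both axes and at most $e^{\abs{\zeta}^2/4}$ everywhere, yet equals $e^{cr^2}$ on the diagonal. You need the full bounds $\abs{Bf(x+iy)}\lesssim e^{(\abs{y}^2+\mu\abs{x}^2)/4}$ and $\abs{Bf(x+iy)}\lesssim e^{(\abs{x}^2+\mu\abs{y}^2)/4}$. These come from the same Gaussian integral evaluated at complex $z$, together with \eqref{E:B-F-relation}. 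Then $e^{i\sqrt{\mu}\zeta^2/4}F_\omega$ is bounded on the rays $\theta_0=\arctan\sqrt{\mu}$ and $\frac{\pi}{2}-\theta_0$. Phragm\'en--Lindel\"of applies in the sector between them because its opening is less than $\pi/2$.

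\textbf{Step 4.} The polynomial factor is not obtained either. The plain Cauchy estimate from $\abs{F_\omega(\zeta)}\lesssim e^{\sqrt{\mu}\abs{\zeta}^2/4}$ loses a factor $n^{1/2}$. Combined with $\alpha!\le\abs{\alpha}!$, your Step 4 yields only $d^{\abs{\alpha}}\abs{\alpha}^{1/4}e^{-\gamma\abs{\alpha}}$. For $d=1$ this is $n^{1/4}e^{-\gamma n}$ rather than the claimed $n^{-1/4}e^{-\gamma n}$, and for $d=2$ it is weaker than $2^{\abs{\alpha}}e^{-\gamma\abs{\alpha}}$. Spherical harmonics play no role here.

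In the paper, $\abs{\alpha}^{(d-2)/4}=\abs{\alpha}^{-1/2}\cdot\abs{\alpha}^{d/4}$, and the two factors have separate sources:
\begin{itemize}
\item The $\abs{\alpha}^{-1/2}$ comes from Vemuri's refined coefficient bound \cite[Theorem 2.2]{Vemuri2008hermite}. It keeps the angular profile $e^{\frac{\sqrt{\mu}}{4}r^2\sin 2\theta}$, so at $r^2=2n/\sqrt{\mu}$ the integrand in Cauchy's formula is a Gaussian in $\theta-\frac{\pi}{4}$ of width $n^{-1/2}$.
\item The $\abs{\alpha}^{d/4}$ comes from $(e/\abs{\alpha})^{\abs{\alpha}/2}\le\prod_j(e/\alpha_j)^{\alpha_j/2}$ and Stirling applied to each $\sqrt{\alpha_j!}$, not to $\sqrt{\abs{\alpha}!}$.
\end{itemize}
For $d\ge 2$ there is an alternative: keep $\frac{\abs{\alpha}!}{\alpha!}$ instead of bounding it by $d^{\abs{\alpha}}$. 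Your crude one-variable bound then gives $\abs{\alpha}^{1/4}(\abs{\alpha}!/\alpha!)^{1/2}e^{-\gamma\abs{\alpha}}\le\abs{\alpha}^{1/4}d^{\abs{\alpha}/2}e^{-\gamma\abs{\alpha}}$, which absorbs all polynomial losses. The case $d=1$, however, genuinely needs Vemuri's refinement.
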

\begin{proof}
 Write $z=x+iy\in\C$. For $u\in S^{d-1}$, define 
 $$F_u(z)= Bf(u_1z,u_2z,\ldots,u_dz).$$ 
Then, for $a=\tanh 2\gamma$, we obtain
    \begin{align*}
        |F_u(z)|
        &\leq \pi^{-\frac{d}{2}}e^{-\sum_{i=1}^{d}u_i^2(x^2-y^2)/4}\int_{\R^n}Ce^{-\frac{1}{2}a|\xi|^2}e^{-\frac{1}{2}|\xi|^2}e^{\sum_{i=1}^dxu_i\xi_i}\\
        &= C\pi^{-\frac{d}{2}}e^{-\frac{(x^2-y^2)}{4}}\prod_{j=1}^d\left(\int_\R e^{-\frac{1}{2}a\xi_j^2}e^{-\frac{1}{2}\xi_j^2}e^{xu_j\xi_j}\, d\xi_j\right)\\
        &= C\pi^{-\frac{d}{2}}e^{-\frac{(x^2-y^2)}{4}}\prod_{j=1}^de^{\frac{x^2u_j^2}{2(1+a)}}\int_{\R}e^{-\frac{1+a}{2}\left(\xi_j-\frac{xu_j}{1+a}\right)^2}\, d\xi_j\\
        &= C\pi^{-\frac{d}{2}}\left(\frac{2\pi}{1+a}\right)^\frac{d}{2}\exp\left(\frac{\mu+(1-\mu)\sin^2\theta}{4}r^2\right),
    \end{align*}
    where $\mu=\frac{1-a}{1+a}=e^{-4\gamma}$.
    Similarly, by $\abs{\hat{f}(x)}\le e^{-\tanh2\gamma \frac{\abs{x}^2}{2}}$,
    and equation (\ref{E:B-F-relation}), 
    we obtain
    \begin{equation}\label{E:bound-II}
     \abs{F_u(z)}\le C\pi^{-\frac{d}{2}}\left(\frac{2\pi}{1+a}\right)^\frac{d}{2}\exp\left( \frac{\mu+(1-\mu)\cos^2\theta}{4}r^2\right).
     \end{equation}

    We see that the function $F_u$ is of one complex variable which satisfies 
    above two bounds. In that case, we use the arguments given in 
    \cite{Vemuri2008hermite} to estimate $F_u$.
    Set $\theta_0= \tan^{-1}\sqrt{\mu}$, and 
    $\theta_1=\frac{\pi}{2}-\theta_0$. 
    Observe that $\theta_1-\theta_0<\frac{\pi}{2}.$ Let $F(z)=\exp\left(i\frac{\sqrt{\mu}}{4}z^2\right)F_u\left(z\right).$ 
    Observe that
    $
        |F(z)| \leq C\exp\left(\frac{r^2}{4}\right).
    $
Also
    \begin{align*}
        |F(re^{i\theta_0})|&\le \exp\left(-\frac{\sqrt{\mu}}{4}r^2\sin 2\theta_0\right)C \pi^{-\frac{d}{2}}\left(\frac{2\pi}{1+a}\right)^\frac{d}{2}\exp\left(\frac{\mu+(1-\mu)\sin^2\theta_0}{4}r^2\right)\\
        &= C\pi^{-\frac{d}{2}}\left(\frac{2\pi}{1+a}\right)^\frac{d}{2}
    \end{align*}
Similarly, by equation (\ref{E:bound-II}), we get 
$\abs{F(re^{i\theta_1})}\le
 C\pi^{-\frac{d}{2}}\left(\frac{2\pi}{1+a}\right)^\frac{d}{2}.$
    Thus, by the Phragmén–Lindelöf principle, we have
    $$|F(z)|\le C\pi^{-\frac{d}{2}}\left(\frac{2\pi}{1+a}\right)^\frac{d}{2},~~~~\text{ for } \theta_0\leq \theta\leq \theta_1.$$
  Therefore, we get
    \begin{align*}
        |F_u(z)|\le C\pi^{-\frac{d}{2}}\left(\frac{2\pi}{1+a}\right)^{\frac{d}{2}}\exp\left(\frac{\sqrt{\mu}}{4}r^2\sin2\theta\right),
    \end{align*}
    for $\theta_0\le \theta\le \theta_1$. 
    The same argument will work for suitable sectors in the 
    other quadrants.
    
Let $F_u(z)=\sum_{n}a_nz^n$.
    Then, from \cite[Theorem 2.2]{Vemuri2008hermite}, we obtain
    $$|a_n|\le C\pi^{-\frac{d}{2}}\left(\frac{2\pi}{1+a}\right)^{\frac{d}{2}}2^{-\frac{n}{2}}\left(\frac{e}{n}\right)^{\frac{n}{2}}n^{-\frac{1}{2}}\mu^{\frac{n}{4}}.$$

From the definition of $F_u$, it follows that
    $$F_u(z)= \sum_{\alpha\in \N_0^d}\frac{Bf^{(\alpha)}(0)}{\alpha!}\prod_{j=1}^d(u_jz)^\alpha= \sum_{n=0}^\infty\left(\sum_{|\alpha|=n}\frac{Bf^{(\alpha)}(0)}{\alpha!}u^\alpha\right)z^n.$$ 
    Thus, for $n\in \N$, we get
    \begin{align*}
        \left| \sum_{|\alpha|=n}\frac{Bf^{(\alpha)}(0)}{\alpha!}u^\alpha\right|&=\frac{\left|F_u^{(n)}(0)\right|}{n!} =|a_n|\\
        &\le C\pi^{-\frac{d}{2}}\left(\frac{2\pi}{1+a}\right)^{\frac{d}{2}}2^{-\frac{n}{2}}\left(\frac{e}{n}\right)^{\frac{n}{2}}n^{-\frac{1}{2}}\mu^{\frac{n}{4}}.
    \end{align*}
    Consequently, for a fixed $n\in \N$, the homogeneous polynomial 
    $$P_n(x):=\frac{1}{C\pi^{-\frac{d}{2}}\left(\frac{2\pi}{1+a}\right)^{\frac{d}{2}}2^{-\frac{n}{2}}\left(\frac{e}{n}\right)^{\frac{n}{2}}n^{-\frac{1}{2}}\mu^{\frac{n}{4}}}\times\sum_{|\alpha|=n}\frac{Bf^{(\alpha)}(0)}{\alpha!}x^\alpha$$ is bounded by $1$ on $S^{d-1}.$ Then by \cite[Theorem II]{kellogg1928}, we have
    $$\abs{Bf^{(\alpha)}(0)} \le \alpha!d^{|\alpha|}C\pi^{-\frac{d}{2}}\left(\frac{2\pi}{1+a}\right)^{\frac{d}{2}}2^{-\frac{|\alpha|}{2}}\left(\frac{e}{|\alpha|}\right)^{\frac{|\alpha|}{2}}|\alpha|^{-\frac{1}{2}}\mu^{\frac{|\alpha|}{4}}.$$
    Write $Bf(w) = \sum_{\alpha} c_{\alpha} w^{\alpha}$.
    Therefore, by equation (\ref{E:H-C-relation}) and, the Stirling's formula
    we obtain
    \begin{align*}
        \left|\langle f,\bold{h}_\alpha\rangle\right|= & 2^{\frac{|\alpha|}{2}}\sqrt{\alpha!}\pi^{\frac{d}{4}}|c_\alpha|\\
        &\le C\pi^{-\frac{d}{4}}\left(\frac{2\pi}{1+a}\right)^{\frac{d}{2}}d^{|\alpha|}|\alpha|^{-\frac{1}{2}}\mu^{\frac{|\alpha|}{4}}\left(\frac{e}{\alpha_1+\ldots+\alpha_d}\right)^{\frac{\alpha_1+\ldots+\alpha_d}{2}}\sqrt{\alpha_1!}\ldots\sqrt{\alpha_d!}\\
        &\le C\pi^{-\frac{d}{4}}\left(\frac{2\pi}{1+a}\right)^{\frac{d}{2}}d^{|\alpha|}|\alpha|^{-\frac{1}{2}}\mu^{\frac{|\alpha|}{4}}\prod_{j=1}^d \left(\frac{e}{\alpha_j}\right)^{\frac{\alpha_j}{2}}\alpha_j^{\frac{1}{4}}\left(\frac{\alpha_j}{e}\right)^{\frac{\alpha_j}{2}}\\
        &\le  C\pi^{\frac{d}{4}}\left(\frac{6\pi}{1+a}\right)^{\frac{d}{2}}d^{|\alpha|}|\alpha|^{\frac{d-2}{4}} e^{-\gamma|\alpha|}.
    \end{align*}
    This completes the proof.
\end{proof}

Another motivation for us is a recent work of 
Neyt-Toft-Vindas \cite{Neyt.et.al.-2025};
where for a class of weights, they proved a result 
\cite[Theorem 1.3]{Neyt.et.al.-2025}, which characterizes the functions 
from $E^1_{\mathcal{F}}(1,1,\lambda,w)$ in terms of the decay of 
their Hermite coefficients. On the other hand, when the weight is trivial,
we see, due to Garg-Thangavelu \cite{Garg2009}, 
that there is a characterization of the functions from 
$E^d_{\mathcal{F}}(a)$ in terms of the decay of the $L^2-$norm of the
Hermite projection operators. Indeed, they proved two results.

One, when the functions are in $E^d_{\mathcal{F}}(a)$
and of $O(k)-$finite type.
In this case, they could prove a really nice estimate for the $L^2-$norm of the
Hermite projection operators. This inspire us to establish an
estimate for the $L^2-$norm of the Hermite projection operators of 
$O(k)-$finite type functions from $E^d_{\mathcal{F}}(a,c,\lambda,w)$,
see Theorem \ref{T:O(k)-finite}.

Another result they proved is for functions from $E^d_{\mathcal{F}}(a)$.
In that case, they proved a slightly weaker estimate than the one obtained 
in the previous case. Following this method, in our situation,
we could prove Theorem \ref{T:Pro-estimate}. Moreover, this makes
us to establish a result, Theorem \ref{T:w-Laguerre-decay},
which characterizes a function space (see (\ref{E:E_H-def})) analogous to 
$E^d_{\mathcal{F}}(a,c,\lambda,w)$ in terms of the
exponential decay of the Laguerre coefficients.

\section{$\log$-type weighted Hardy class and Decay of the Harmonic Oscillator}
\label{S:log-weight}

In this section, we prove Theorem\ref{T:P-coeff}, \ref{T:P-evolution} and \ref{T:WHE}.

\subsection{Proof of Theorem \ref{T:P-coeff}.}
To prove the second part, we follow the approach used in 
\cite[Theorem 4.7]{garg2012structure}, which requires the first part. 
With that in mind, we start with the proof of the first part. 
Let $0<s<\frac{1}{2}$ and let $f\in E^1_{\mathcal{F}}
(1,1,\lambda, (\log_+x)^{\frac{1}{1-2s}})$.
Then 
\begin{equation*}
\begin{aligned}
\abs{Bf(x+iy)} \le&\; e^{\frac{y^2-x^2}{4}} 
\int_{\R} e^{-t^2/2+xt} \abs{f(t)} dt\\
\le &\; e^{\frac{y^2-x^2}{4}}  
\int e^{-t^2+xt + \lambda \log(1+\abs{t})^{\frac{1}{1-2s}}}dt\\
= &\; e^{\frac{y^2}{4}}  
\int e^{-\frac{1}{2}\abs{x-2t}^2+ \lambda \log(1+\abs{t})^{\frac{1}{1-2s}}} dt.
\end{aligned}
\end{equation*}
Note that $\log(1+\abs{\cdot})$ is a sub-additive function. Therefore
\begin{equation*}
\log(1+\abs{t}) \le \log(1+\abs{x-2t}) + \log(1+\abs{x}),
\end{equation*}
and hence by Jensen's inequality, we get 
\begin{equation*}
\log(1+\abs{t})^{\frac{1}{1-2s}} \le 
2^{\frac{2s}{1-2s}}(\log(1+\abs{x-2t})^{\frac{1}{1-2s}} + 
\log(1+\abs{x})^{\frac{1}{1-2s}}).
\end{equation*}
The above inequalities then provides us the estimate
\begin{equation*}
\abs{Bf(x+iy)} \le e^{\frac{y^2}{4}+2^{\frac{2s}{1-2s}}\lambda 
\log(1+\abs{x})^{\frac{1}{1-2s}}}
\int e^{-\frac{1}{2}\abs{x-2t}^2+ 
2^{\frac{2s}{1-2s}}\lambda  \log(1+\abs{x-2t})^{\frac{1}{1-2s}}} dt. 
\end{equation*}
Thus, we have 
\begin{equation*}
\abs{Bf(x+iy)} \lesssim_{s,\lambda} 
e^{\frac{y^2}{4}+2^{\frac{2s}{1-2s}}\lambda
\log(1+\abs{x})^{\frac{1}{1-2s}}}.
\end{equation*}
Also, by the fact that $B\hat{f}(z) = Bf(-iz)$, we have 
\begin{equation*}
\abs{Bf(x+iy)} \lesssim_{s,\lambda} 
e^{\frac{x^2}{4}+2^{\frac{2s}{1-2s}}\lambda
\log(1+\abs{y})^{\frac{1}{1-2s}}}.
\end{equation*}
Observe that the function $(\log1+\abs{\cdot})^{\frac{1}{1-2s}}$
satisfies the properties $(\alpha)$, $(\beta_{\sigma})$, 
and $(\beta^*_{\sigma})$, and since 
$(\log1+\abs{t})^{\frac{1}{1-2s}}=o(t^2)$, therefore by the arguments 
given in \cite[Theorem 1.1]{Neyt.et.al.-2025}, we have 
$\abs{Bf(z)}\le e^{\epsilon \abs{z}^2}$ for every $\epsilon>0$.
Hence, by Theorem \ref{T:WPL} and \cite[Lemma 2.16]{neyt2025quantitative},
we obtain
\begin{equation*}
\abs{Bf(z)} \lesssim_{s,\lambda} 
e^{2^{\frac{2s}{1-2s}}\lambda
\log(1+\abs{z})^{\frac{1}{1-2s}}}.
\end{equation*}
Write $Bf(z) = \sum_{n=0}^{\infty} a_n w^n$, then
by the Cauchy's estimate
\begin{equation*}
\abs{a_n} \lesssim_{s,\lambda} \frac{1}{\abs{z}^n}
e^{2^{\frac{2s}{1-2s}}\lambda
\log(1+\abs{z})^{\frac{1}{1-2s}}}.
\end{equation*}
Optimizing with respect to $\abs{z}$ with critical point
$\abs{z} = e^{\frac{1}{2}\(\frac{n(1-2s)}{\lambda}\)^{\frac{1-2s}{2s}}}$
gives us
\begin{equation*}
\abs{a_n} \lesssim_{s,\lambda} 
e^{-s\(\frac{n(1-2s)}{\lambda}\)^{\frac{1-2s}{2s}} n^{\frac{1}{2s}}}.
\end{equation*}
Now, we use the relation 
 $\abs{\<f, h_n\>} = \sqrt{2^n n!\pi^{1/2}} \abs{a_n}$
(see \cite{Vemuri2008hermite}), and conclude that
\begin{equation*}
\abs{\<f, h_n\>} \lesssim_{s,\lambda} \sqrt{2^n n!}\,
e^{-s\(\frac{n(1-2s)}{\lambda}\)^{\frac{1-2s}{2s}} n^{\frac{1}{2s}}}. 
\end{equation*}
Since, $\frac{1}{2s}>1$, therefore, we have the stated result.

Now, we move towards the proof of the second part. 
For $\mu\in \N^{d-1}$, define 
$f_{\mu}:\R\rightarrow \C$ by
\begin{equation*}
f_{\mu}(y) = \int_{\R^{d-1}} f(x,y) \bold{h}_{\mu}(x) dx.
\end{equation*}
Then, by the uniform bound $\abs{\bold{h}_{\mu}(x)}\le \pi^{d-1/4}$,
we compute
\begin{equation*}
\begin{aligned}
\abs{f_{\mu}(y)} \lesssim_{d} &\; e^{-\frac{y^2}{2}} 
\int_{\R^{d-1}}  e^{-\frac{\abs{x}^2}{2}+\lambda 
\(\log(1+\abs{y}+\abs{x})\)^{\frac{1}{1-2s}}} dx\\
\lesssim_d &\;  e^{-\frac{y^2}{2}} \int_{\R^{d-1}} 
e^{-\frac{\abs{x}^2}{2}+\lambda 
\[\log(1+\abs{y}) + \log(1+\abs{x})\]^{\frac{1}{1-2s}}} dx.
\end{aligned}
\end{equation*}
Hence, by the Jensen's inequality, we get
\begin{equation*}
\abs{f_{\mu}(y)} \lesssim_{d,s,\lambda}  
e^{-\frac{y^2}{2}+2^{\frac{2s}{1-2s}}\lambda 
\(\log(1+\abs{y})\)^{\frac{1}{1-2s}}}.
\end{equation*}
Observe that by the Parseval's theorem for the Fourier transform, we have
\begin{equation*}
\hat{f}_{\mu}(y) =i^{\abs{\mu}} \int_{\R^{d-1}} 
\hat{f}(\xi,y) \bold{h}_{\mu}(\xi) d\xi.
\end{equation*}
A similar calculation performed as above then provides us that
\begin{equation*}
\abs{\hat{f}_{\mu}(y)} \lesssim_{d,s,\lambda}  
e^{-\frac{y^2}{2}+2^{\frac{2s}{1-2s}}\lambda 
\(\log(1+\abs{y})\)^{\frac{1}{1-2s}}}.
\end{equation*}
Therefore, we obtain
$f_{\mu}\in  
E^1_{\mathcal{F}}(1,1,2^{\frac{2s}{1-2s}}\lambda, 
(\log_+x)^{\frac{1}{1-2s}})$. 
Now, for $j\in\{1,2,\dots,d\}$, write $\alpha = (\mu, \alpha_j)
\in \N^{d-1}\times \N$, then
\begin{equation*}
\< f, \bold{h}_{\alpha} \> = \< f_{\mu}, h_{\alpha_j} \>.
\end{equation*}
Thus, by appealing to the first part, we conclude that
\begin{equation*}
\abs{\< f, \bold{h}_{\alpha} \>} \lesssim_{d,s,\lambda}  
e^{-(1-\epsilon)\(\frac{1-2s}{\lambda}\)^{\frac{1-2s}{2s}}
\frac{s}{2} {\alpha_j}^{\frac{1}{2s}}},
\end{equation*}
for every $j\in\{1,2,\dots,d\}$. This completes the proof.
\qed
\begin{rem}
 We observe that achieving optimal pointwise estimates for the Schr\"{o}dinger equation hinges on deriving sharp decay bounds for the Hermite coefficients $\langle f, \bold{h}_{\alpha}\rangle.$ Interestingly, the approach developed by Neyt et al. \cite{neyt2025quantitative} leads to the following result: If $f\in E_{\mathcal{F}}^d(1,1,\lambda,w)$, then $\abs{\langle f,\bold{h}_\alpha\rangle}\le C \sqrt{\alpha!}\pi^{\frac{d}{4}}\left(\frac{d}{\sqrt{2}}\right)^{|\alpha|} \, e^{-L^3\lambda\varphi^\ast\left(\frac{|\alpha|}{L^3\lambda}\right)}$. Indeed, 
let $G_u(\frac{z}{\sqrt{2}})=Bf(u_1z,\ldots,u_dz),~~~~z\in\C,~ u\in S^{d-1}$, where $B$ is the Bargmann transform defined as in \eqref{BargT}.
From \cite{neyt2025quantitative}, we obtain 
 $$|G_u(z)|\le C \, e^{L^3\lambda\omega(\frac{|z|}{2})}, ~z\in \mathbb{C}.$$
Then, by following the steps done in 
the proof of Theorem \ref{T:Gaussian-evolution}, and using the 
Cauchy's inequality, provides us that
$$|\langle f, \bold{h}_\alpha\rangle|\le C \sqrt{\alpha!}\pi^{\frac{d}{4}}\left(\frac{d}{\sqrt{2}}\right)^{|\alpha|} e^{-L^3\lambda\varphi^\ast\left(\frac{|\alpha|}{L^3\lambda}\right)}.$$
We emphasize that Theorem \ref{T:P-coeff} provides a substantially
 improved estimate in comparison with the 
above estimate for $w(t)=(\log_+t)^{\frac{1}{1-2s}}$.

\end{rem}

\subsection{Proof of Theorem \ref{T:WHE}.}
We first look at the case $y>2^{\frac{1}{2s}-1}$.
Denote 
$$c(y,s) = \frac{2}{(2y)^{2s}}, \quad \text{and} \quad
Q_{s,y}(x) = \frac{x^2}{2^{\frac{1}{2s}}} P_{s,y}(x)- 
\frac{1}{2^{\frac{1}{2s}-1}}L_{s,y}(x).
$$
To prove the estimate (\ref{E:Asymp-WHE}), we decompose 
the sum into two parts by setting 
$$N= \max\[c(y,s)
Q_{s,y}(x)^{2s}-1, 1\],$$
for sufficiently large $x$.
Then, by the uniform bound $\abs{h_n(x)}\le \pi^{-1/4}$, 
one part can be estimated directly
\begin{equation*}
\sum_{n\ge N} \frac{e^{-\kappa y n^{\frac{1}{2s}}}}{n^{\beta}}
\abs{h_n(x)}^{\kappa} \lesssim_{y,s}
Q_{s,y}(x)^{-2s\beta}
e^{- \kappa \(\frac{x^2}{2} P_{s,y}(x)- 
\(\frac{2s}{y}\)^{\frac{2s}{1-2s}}(1-2s)(\log_+\sqrt{2}x)
^{\frac{1}{1-2s}}\)}.
\end{equation*}

To estimate the other part of the sum we need to do some work.
A key ingredient in the further estimates will be the following
Plancherel-Rotach asymptotic formula for Hermite polynomials, 
see \cite{plancherel1929valeurs,szeg1939orthogonal}.
Let $\epsilon$ and $\omega$ be two fixed positive numbers, then, 
for $x=(2n+1)^{1/2} \cosh\phi$ and $\epsilon\le \phi\le \omega$
\begin{equation*}
e^{-x^2/2}H_n(x) = 
\frac{\sqrt{2^n n!} 
\exp[(\frac{n}{2}+\frac{1}{4})(2\phi-\sinh2\phi)]}
{2^{3/4} \pi^{1/2} n^{1/4} \sinh(\phi)^{1/2}}\(1+O(n^{-1})\).
\end{equation*}
Since, $N+1\le c(y,s) \frac{x^2}{2}$, and for large $x$
$$e^{\frac{x}{2}
\(\sqrt{x^2-2(n+1)}\sqrt{x^2-(2n+1)}\)} \lesssim 1.$$
Thus, the above formula can be thought in the following way.
\begin{equation*}
\abs{h_n(x)} \lesssim
\frac{\exp((n+\frac{1}{2})\phi_n)-(n+1)\sinh(\phi_n)\cosh(\phi_n))}
{2^{3/4} \pi^{1/2} n^{1/4} \sinh(\phi_n)^{1/2}} 
\quad 2\le n+1 \le c(y,s)\frac{x^2}{2},
\end{equation*}
where $\cosh\phi_n = \frac{x}{\sqrt{2(n+1)}}$. 
Note that $c(y,s)<1$.
A direct computation shows that 
$$\frac{e^{\phi_n}}{\sinh\phi_n} \le 
1+\frac{1}{\sqrt{1-c(y,s)}},$$ 
in the desired range. Therefore, we obtain
\begin{equation*}
\frac{e^{-\kappa y n^{\frac{1}{2s}}}}{n^{\beta}}
\abs{h_n(x)}^{\kappa} \lesssim_{y,s,\kappa}
n^{-\kappa/4-\beta}
e^{\kappa\(n\phi_n-n^{\frac{1}{2s}}y-\frac{x}{2}\sqrt{x^2-2(n+1)}\)}.
\end{equation*}
In order to do the further estimate, we look at the argument function
\begin{equation*}
A_s(n) = n\phi_n-n^{\frac{1}{2s}}y -\frac{x}{2}\sqrt{x^2-2(n+1)}.
\end{equation*}
For $s\in(0,\frac{1}{2})$, $A_s$ is defined on $[1,N-1]$, which is smooth.
A straight forward calculation provides us that
\begin{equation*}
\frac{\partial A_s(n)}{\partial n} = 
\phi_n+\frac{x}{2(n+1)\sqrt{x^2-2(n+1)}}-\frac{1}{2s}n^{\frac{1}{2s}-1}y,
\quad \text{and}
\end{equation*}
\begin{equation*}
\frac{\partial^2 A_s(n)}{\partial n^2} = 
x\frac{(2n+5)(n+1)-x^2(n+2)}{2(n+1)^2(x^2-2(n+1))^{3/2}}
-\frac{1}{2s}\(\frac{1}{2s}-1\)n^{\frac{1}{2s}-2}y.
\end{equation*}
It is shown in \cite[Theorem 2]{Radchenko-Ramos-2025} that
for $1<n<\frac{x^2-4}{2}$ following estimate holds.
\begin{equation*}
x\frac{(2n+5)(n+1)-x^2(n+2)}{2(n+1)^2(x^2-2(n+1))^{3/2}}
< -\frac{c}{x^2},
\end{equation*}
for some absolute constant $c>0$. And we have that 
$\frac{1}{2s}-1>0$, therefore 
$\frac{\partial^2 A_s(n)}{\partial n^2} < -\frac{c}{x^2}$.
Hence, $A_s(n)$ has a maximum in $[1,N-1]$. 
Next, we go on to find out the critical point (say $n_c$) of $A_s(n)$. 
Observe that $\frac{\partial A_s(n_c)}{\partial n} =0$ is equivalent to
\begin{equation*}
\phi_{n_c} + \frac{(\cosh\phi_{n_c})^3}{x^2 \sinh\phi_{n_c}}
= \frac{1}{2s}n_c^{\frac{1}{2s}-1}y.
\end{equation*}
The same way in which a similar equation was solved in 
\cite[Theorem 2]{Radchenko-Ramos-2025}, is also applicable in our 
situation; which gives us that
\begin{equation*}
\cosh^{-1} \(\frac{x}{\sqrt{2(n_c+1)}}\) = 
\frac{1}{2s} n_c^{\frac{1}{2s}-1} y + O_{y,s}\(1\).
\end{equation*}
For sufficiently large $x$, it follows that
\begin{equation*}
\log\(\frac{x+\sqrt{x^2-2(n_c+1)}}{\sqrt{2(n_c+1)}}\)
= 
\frac{1}{2s} n_c^{\frac{1}{2s}-1} y + O_{y,s}\(1\).
\end{equation*}
Hence, for large $x$, we obtain
\begin{equation*}
n_c = \(\frac{2s}{y}\log_+\sqrt{2}x\)^{\frac{2s}{1-2s}} + O_{y,s}(1).
\end{equation*}

Then,
\begin{equation*}
\begin{aligned}
\max_{n\in[1,N-1]} A_s(n)\le&\; A_s(n_c)
 = n_c\phi_{n_c}-
n_c^{\frac{1}{2s}}y - \frac{x}{2}\sqrt{x^2-2(n_c+1)}\\
=&\; \(\frac{1}{2s}-1\) n_c^{\frac{1}{2s}}y - 
\frac{x}{2}\sqrt{x^2-2(n_c+1)}\\
\lesssim_{y,s}&\; 
-\frac{x^2}{2} P_{s,y}(x)+ 
\(\frac{2s}{y}\)^{\frac{2s}{1-2s}} 
(1-2s)(\log_+\sqrt{2}x)^{\frac{1}{1-2s}}.
\end{aligned}
\end{equation*}
Now, we are ready to estimate another part of the sum.
For that, we rely on the estimate 
\begin{equation*}
A_s(n) \le A_s(n_c) - \frac{c(n-n_c)^2}{2x^2},
\end{equation*}
which is a consequence of Taylor's expansion and the fact that
$\frac{\partial^2 A_s(n)}{\partial n^2} < -\frac{c}{x^2}$.
\begin{equation*}
\begin{aligned}
\sum_{n< N} \frac{e^{-\kappa y n^{\frac{1}{2s}}}}{n^{\beta}}
\abs{h_n(x)}^{\kappa} \lesssim &\;_{y,s} \sum_{n< N} n^{-\kappa/4-\beta}
e^{\kappa A_s(n)}\\
\lesssim _{y,s}&\;
e^{-\kappa \(\frac{x^2}{2} P_{s,y}(x)-
\(\frac{2s}{y}\)^{\frac{2s}{1-2s}}(1-2s)(\log_+\sqrt{2}x)
^{\frac{1}{1-2s}}\)}
\sum_{n< N} \frac{e^{-\kappa c (n-n_c)^2/x^2}}
{n^{\kappa/4+\beta}}.
\end{aligned}
\end{equation*}
We look at the sum in the upper bound separately. 
\begin{equation*}
\begin{aligned}
\sum_{n< N} \frac{e^{-\kappa c (n-n_c)^2/x^2}}{n^{\kappa/4+\beta}}
\le &\; \sum_{n< \frac{n_c}{2}} 
\frac{e^{- \frac{\kappa c_1}{x^2}(\log_+\sqrt{2}x)^{\frac{4s}{1-2s}}}}
{n^{\kappa/4+\beta}} + \sum_{N>n> \frac{n_c}{2}} 
\frac{e^{-\kappa c (n-n_c)^2/x^2}}{n^{\kappa/4+\beta}}\\
\lesssim_{y,s}&\; 
\max\{1, (\log_+ \sqrt{2}x)^{\frac{4s(1-\kappa/4-\beta)}{1-2s}}\} 
e^{- \frac{\kappa c_1}{x^2}(\log_+\sqrt{2}x)^{\frac{4s}{1-2s}}}
+
\frac{\sum_{n\in\Z} e^{-\kappa c_2 n^2/x^2}}{(\log_+ 
\sqrt{2}x)^{\frac{2s(\kappa/4+\beta)}{1-2s}}},
\end{aligned}
\end{equation*}
for some $c_1,c_2>0$.
The second sum in the last estimate was dealt in 
\cite{Radchenko-Ramos-2025} by the Poisson summation formula. 
With that, we arrive at the conclusion that
\begin{equation*}
\sum_{n< N} \frac{e^{-\kappa y n^{\frac{1}{2s}}}}{n^{\beta}}
\abs{h_n(x)}^{\kappa} \lesssim _{y,s,\kappa, \beta}
x (\log_+\sqrt{2}x)^{-\frac{2s(\kappa/4+\beta)}{1-2s}}
e^{- \kappa \(\frac{x^2}{2} P_{s,y}(x)- 
\(\frac{2s}{y}\)^{\frac{2s}{1-2s}}(1-2s)(\log_+\sqrt{2}x)
^{\frac{1}{1-2s}}\)}
\end{equation*}
This finishes the proof of the first part.

When $y\le (2s)^{2s}$, we choose 
\begin{equation*}
c(y,s) = \frac{2\theta(y)}{(2y)^{2s}},
\end{equation*}
and run the above proof to obtain the estimate (\ref{E:Asymp-WHE1}).
\qed


\subsection{Proof of Theorem \ref{T:P-evolution}}
Let $u_0\in E^1_{\mathcal{F}}(1,1,(\log_+x)^{\frac{1}{1-2s}})$,
and let $u(x,t)$ be the solution of problem (\ref{E:HOSE}).
Observe that $u_0$ can be written as 
\begin{equation*}
u_0(x) = \sum_{n\ge0} \< f, h_n \> h_n,
\end{equation*}
and then
\begin{equation*}
u(x,t) = \sum_{n\ge0} e^{(2n+1)it}\< f, h_n \> h_n.
\end{equation*}
Therefore, from Theorem \ref{T:P-coeff}, we get
\begin{equation*}
\abs{u(x,t)} \lesssim_{s,\lambda} \sum_{n\ge0} 
e^{-(1-\epsilon)s\(\frac{1-2s}{\lambda}\)
^{\frac{1-2s}{2s}}n^{\frac{1}{2s}}} h_n.
\end{equation*}

When, $\lambda < \lambda_s$, we see that
$y=s\(\frac{1-2s}{\lambda}\)^{\frac{1-2s}{2s}}
>  2^{\frac{1}{2s}-1}$. Then a direct application 
of estimate (\ref{E:Asymp-WHE}) from
Theorem \ref{T:WHE}, for $\kappa=1$, and $\beta=0$,
gives us that
\begin{equation*}
\abs{u(x,t)} \lesssim_{s,\lambda} 
\abs{x} (\log_+\sqrt{2}x)^{-\frac{2s(\kappa/4+\beta)}{1-2s}}
e^{- \kappa \(\frac{x^2}{2} P_{s,y}(x)- 
L_{s,y}(x)\)},
\end{equation*}
 where $L_{s,y}$ is given in equation (\ref{E:parameters}). Hence
\begin{equation*}
\abs{u(x,t)} \lesssim_{s,\lambda} 
e^{-(1-\epsilon) \(\frac{x^2}{2} P_{s,y}(x)- 
\lambda 2^{\frac{2s}{1-2s}} (\log_+\sqrt{2}x)^{\frac{1}{1-2s}}\)}.
\end{equation*}

Similarly, when $\lambda \ge \lambda_s$, an use of estimate 
(\ref{E:Asymp-WHE1}) from
Theorem \ref{T:WHE}, for $\kappa=1$, and $\beta=0$,
gives us that
\begin{equation*}
\abs{u(x,t)} \lesssim_{s,\lambda} 
e^{-(1-\epsilon) \theta(y)^{\frac{1}{2s}}\(\frac{x^2}{2} P_{s,y}(x)- 
\lambda 2^{\frac{2s}{1-2s}} (\log_+\sqrt{2}x)^{\frac{1}{1-2s}}\)}.
\end{equation*}
for every $\epsilon>0$.

For both the cases the bound on the $\hat{u}(x,t)$ follows by 
the relation $\hat{u}(x,t) = u(x,t+\frac{\pi}{2})$.
\qed

The proof of Theorem \ref{T:P-evolution-HD}
follows similarly by combining second part 
of the Theorem \ref{T:P-coeff}, and Corollary \ref{C:HDWHE}. 

\section{Weighted Hardy class and Decay of Laguerre coefficients}
\label{T:Laguerre-coeff}
For $p,q,\lambda>0$, let
\begin{equation}\label{E:E_H-def}
E_{H_{\nu}}(p,q,\lambda,w) = \Big\{f\in (L^1\cap L^2) (\R^+) \st 
\abs{f(r)} \le 
C e^{-\frac{p r^2}{2}+\lambda, w(q r)},\, 
\abs{H_{\nu}f(s)} \le 
C e^{-\frac{p s^2}{2}+\lambda w(q s)}\Big\}.
\end{equation}
For this section, we assume that our weight function satisfies the 
following condition.
\begin{equation*}
\int_{x}^{\infty} \frac{w(t)}{t^3} dt = O\(\frac{w(x)}{x^2}\),\quad x
\rightarrow \infty.
\end{equation*} 
In \cite[Theorem 4.1]{Garg2009}, Garg and Thangavelu justified the 
decay of Laguerre coefficients of functions from 
$E_{H_{\nu}}\(p,q,\lambda,0\)$.
In this section, we derive the decay of Laguerre coefficients of 
functions from $E_{H_{\nu}}(p,q,\lambda,w)$.
\begin{thm}\label{T:w-Laguerre-decay}
Let $f\in E_{H_{\nu}}(p,q,\lambda,w)$. Then Laguerre coefficients of $f$
satisfies the following estimates.
\begin{enumerate}
\item If $p\ge 1$, then
\begin{equation*}
\abs{\<f, \psi_k^{\nu}\>} \lesssim_{\nu, \lambda} 
2^{2k} (k+\nu)! \, e^{2k\log q-\frac{1}{l}\phi^*\(2lk\)}, 
\end{equation*}
for some $l>0$, where $\phi^*$ is the Young conjugate of 
$\phi(t) = w(e^t)$. 
\item If $p< 1$, then
\begin{equation*}
\abs{\<f, \psi_k^{\nu}\>} \lesssim_{\nu, \lambda} 2^{2k} (k+\nu)! \,
e^{2k\log q -\frac{1}{l}\zeta_{p,q}^*(2lk)}, 
\end{equation*}
for some $l>0$, where $\zeta^*$ is the Young conjugate of 
$\zeta_{p,q}(t) = 
\frac{1}{4q^2}\sqrt{\frac{1-p}{1+p}}\,e^{2t}+w(e^t)$. 
\end{enumerate}
\end{thm}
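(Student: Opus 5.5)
We estimate the generalized Fock space transform $F:=U_\nu f$ in the complex plane and then read off the Taylor coefficients. This parallels \cite[Theorem 4.1]{Garg2009}, with the weighted Phragmén--Lindelöf principle, Theorem \ref{T:WPL}, replacing the classical one. By (\ref{E:U-exp}), the coefficient of $z^{2k}$ in $F$ is $\frac{(-1)^k}{2^{\nu+2k}\Gamma(k+\nu+1)}\<f,\psi_k^\nu\>$. Hence it suffices to bound these coefficients by $e^{2k\log q-\frac1l\phi^*(2lk)}$, up to constants depending on $\nu,\lambda$. The prefactor $2^{2k}(k+\nu)!$ in the statement is exactly the inverse of the factor in (\ref{E:U-exp}), up to $2^\nu$.

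\textbf{Step 1: bound on the real axis.} Write $z=x+iy$. We use $\abs{J_\nu(izr)/(izr)^\nu}\lesssim e^{\abs{x}r}$, which follows from the integral representation of $J_\nu$ since $\abs{e^{-xrs}}\le e^{\abs{x}r}$ for $\abs{s}\le1$. Together with the hypothesis on $f$ this gives
\begin{equation*}
\abs{F(z)}\lesssim e^{\frac{y^2-x^2}{4}}\int_0^\infty e^{-\frac{(1+p)r^2}{2}+\abs{x}r+\lambda w(qr)}r^{2\nu+1}\,dr.
\end{equation*}
Completing the square and handling $w(qr)$ with the subadditivity-type condition $(\alpha)$, as was done with Jensen's inequality in the proof of Theorem \ref{T:P-coeff}, we get
\begin{equation*}
\abs{F(x+iy)}\lesssim e^{\frac{y^2}{4}-\frac{x^2}{4}\cdot\frac{p}{1+p}\cdot\text{(const)}+L\lambda w(c\abs{x})}.
\end{equation*}
The growth condition $\int_x^\infty w(t)t^{-3}dt=O(w(x)/x^2)$ should be what lets the Gaussian integral against $e^{\lambda w}$ be absorbed without losing more than a constant multiple of $\lambda w$. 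By (\ref{E:U-rotation}), the same hypothesis on $H_\nu f$ gives the symmetric bound with $x$ and $y$ interchanged.

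\textbf{Step 2: the case $p\ge1$.} On the axes $F$ is then bounded by $e^{A\lambda w(q\abs{z})}$: at $\arg z=0$ the Gaussian factor $e^{(y^2-x^2)/4}$ is at most $1$, and similarly at $\arg z=\pi/2$ after the rotation. A priori $F$ has order $2$ with arbitrarily small type, by the argument of \cite[Theorem 1.1]{Neyt.et.al.-2025} as already invoked in the proof of Theorem \ref{T:P-coeff}. Theorem \ref{T:WPL} with $\sigma=2$ on quarter-plane sectors then gives $\abs{F(z)}\lesssim e^{A\lambda w(q\abs{z})}$ on all of $\C$. Cauchy's estimate yields
\begin{equation*}
\abs{\text{coeff}_{2k}}\lesssim \inf_{R}R^{-2k}e^{A\lambda w(qR)}.
\end{equation*}
Substituting $R=e^t/q$ turns the infimum into $q^{2k}\exp(-\sup_t[2kt-A\lambda\phi(t)])$, which equals $q^{2k}e^{-A\lambda\phi^*(2k/(A\lambda))}$. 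With $l=1/(A\lambda)$ this is exactly $e^{2k\log q-\frac1l\phi^*(2lk)}$; convexity of $\phi$ (condition $(\delta)$) is used only to identify the infimum with the Young conjugate.

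\textbf{Step 3: the case $p<1$, and the main obstacle.} Now a residual Gaussian growth $e^{\frac{\mu}{4}\abs{z}^2}$ remains on the axes, with $\mu$ depending on $p$ and $\sqrt{(1-p)/(1+p)}$ appearing after optimisation. We multiply $F$ by a Gaussian $e^{\pm i c z^2}$, as in the proof of Theorem \ref{T:coeff-decay}, to cancel this growth on suitable rays $\theta_0,\theta_1$. We then apply Theorem \ref{T:WPL} in the sectors between those rays and obtain
\begin{equation*}
\abs{F(z)}\lesssim e^{\frac{1}{4}\sqrt{\frac{1-p}{1+p}}\abs{z}^2+A\lambda w(q\abs{z})},
\end{equation*}
where the quadratic coefficient comes from $\frac{\sqrt\mu}{4}\sin2\theta\le\frac{\sqrt\mu}{4}$. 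Cauchy's estimate with $R=e^t/q$ then produces the exponent $\frac{1}{4q^2}\sqrt{\frac{1-p}{1+p}}e^{2t}+w(e^t)=\zeta_{p,q}(t)$, up to absorbing the constant $A\lambda$ into $l$. The conjugate $\zeta^*_{p,q}$ then appears in the same way as $\phi^*$ did in Step 2.

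I expect the main obstacle to be Step 1: obtaining the axis bounds with a clean weight $A\lambda w(q\abs z)$ rather than a distorted argument. This is where the integral hypothesis on $w$ must be used; I would first try a Laplace-method estimate on $\int e^{-(r-r_0)^2}e^{\lambda w(qr)}dr$. A second difficulty is checking that the Gaussian twist in Step 3 is compatible with Theorem \ref{T:WPL}, whose hypotheses require the boundary bound to involve only the weight and no quadratic term.
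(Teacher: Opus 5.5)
Your architecture matches the paper's. Step 1 is Proposition~\ref{P:U-bounds}. Step 2 is part (1) of the paper's proof: minimal type via Lemma~\ref{L:U-imp-bounds}, then Theorem~\ref{T:WPL} on quadrants, then Cauchy's estimate with $qR=e^t$. Step 3 is the Vemuri-type twist.

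Step 1 needs two corrections. First, the exponent is wrong. Completing the square produces $e^{x^2/(2(1+p))}$, and $-\frac{x^2}{4}+\frac{x^2}{2(1+p)}=\mu\frac{x^2}{4}$ with $\mu=\frac{1-p}{1+p}$. So the bound is $e^{\frac{y^2}{4}+\mu\frac{x^2}{4}+L\lambda w(q\abs{x})}$. On the real axis the Gaussian disappears because $\mu\le 0$ for $p\ge 1$, not because $e^{(y^2-x^2)/4}\le 1$; that reasoning ignores what the $r$-integral contributes. Second, Step 1 is not the hard part. Condition $(\alpha)$ plus $w=o(t^2)$ handle the integral exactly as in Proposition~\ref{P:U-bounds}. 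The factor $r^{2\nu+1}$ costs $(1+\abs{x})^{2\nu+1}$, which you absorb into a multiple of $w$: convexity and unboundedness of $\phi$ give $w(t)\ge a\log t-b$, so this only changes $l$. The integral hypothesis plays a different role. With $u=ts$,
$$\int_1^\infty w(ts)s^{-3}\,ds=t^2\int_t^\infty w(u)u^{-3}\,du=O(w(t)),$$
so it is precisely $(\beta^*_{2})$. That is the condition that licenses Theorem~\ref{T:WPL} with $\sigma=2$, and via the two lemmas it also yields $(\alpha)$, $(\beta_2)$ and $w=o(t^2)$.

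The one real gap is in Step 3, and it is not the boundary condition. The twist $G(z)=e^{i\frac{\sqrt{\mu}}{4}z^2}F(z)$ is built so that on $\theta_0=\arctan\sqrt{\mu}$ one has $\sqrt{\mu}\sin2\theta_0=\mu+(1-\mu)\sin^2\theta_0=\frac{2\mu}{1+\mu}$. Symmetrically, on $\theta_1=\frac{\pi}{2}-\theta_0$ the second bound does the same. This leaves $\abs{G}\lesssim e^{L\lambda w(q\abs{z})}$ on both rays.

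The problem is the interior growth hypothesis of Theorem~\ref{T:WPL}. For $p<1$, Lemma~\ref{L:U-imp-bounds} is unavailable: already $f=e^{-pr^2/2}$ gives $U_\nu f=c\,e^{\mu z^2/4}$. A priori you only know $\abs{G(z)}\lesssim e^{(\frac14+\epsilon)\abs{z}^2}$. With $\sigma=2$ the theorem requires $\abs{G}\lesssim e^{\epsilon\abs{z}^2}$ for every $\epsilon>0$, which you cannot verify.

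The fix is as follows.
1. The sector has opening $\rho=\frac{\pi}{2}-2\theta_0<\frac{\pi}{2}$, so use $\sigma=\pi/\rho>2$ instead.
2. Since $s^{-1-\sigma}\le s^{-3}$ on $[1,\infty)$, $(\beta^*_{2})$ implies $(\beta^*_{\sigma})$. Order-$2$ growth is $O(e^{\epsilon\abs{z}^{\sigma}})$ for all $\epsilon>0$, so Theorem~\ref{T:WPL} now applies.
3. In the remaining angles $[0,\theta_0]$ and $[\theta_1,\frac{\pi}{2}]$, the direct bounds have quadratic part at most $\frac{2\mu}{1+\mu}\frac{\abs{z}^2}{4}\le\frac{\sqrt{\mu}}{4}\abs{z}^2$, since $2\sqrt{\mu}\le 1+\mu$.
4. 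Hence $\abs{F(z)}\lesssim e^{\frac{\sqrt{\mu}}{4}\abs{z}^2+A'\lambda w(q\abs{z})}$ on all of $\C$ for some constant $A'$. Your Cauchy step then works with $l=1/\max(1,A'\lambda)$.

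With this repair your route is actually sounder than the paper's. The paper sets $F_\nu=e^{-\lambda w(q\abs{z})}U_\nu f$ and runs Vemuri's Phragm\'en--Lindel\"of argument on it, but that function is not holomorphic.
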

The following lemmas are required in order to prove this result.
\begin{prop}\label{P:U-bounds}
If $f\in E_{H_{\nu}}(p,q,\lambda,w)$ then
\begin{equation*}
\abs{U_{\nu}f(z)} \lesssim_{\nu,p,q,\lambda}
e^{\frac{y^2}{4}+\(\frac{1-p}{1+p}\)\frac{x^2}{4}+
L\lambda w(q\abs{x})}
\quad \text{and} \quad
\abs{U_{\nu}f(z)} \lesssim_{\nu,p,q,\lambda}
e^{\frac{x^2}{4}+\(\frac{1-p}{1+p}\)\frac{y^2}{4}+
L\lambda w(q\abs{y})}
\end{equation*}
for $z=x+iy\in\C$.
\end{prop}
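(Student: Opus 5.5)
The first bound will come directly from the integral defining $U_{\nu}f$ together with the decay of $f$. The second bound will then follow from the first by the rotation identity (\ref{E:U-rotation}), $U_{\nu}H_{\nu}f(z)=U_{\nu}f(-iz)$, since $H_{\nu}f$ obeys the same kind of bound as $f$.

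\textbf{Step 1 (kernel bound).} From the integral representation of $J_{\nu}$ we have, for $\zeta=izr$ with $z=x+iy$,
\[
\abs{\frac{J_{\nu}(izr)}{(izr)^{\nu}}}\lesssim_{\nu}\int_{-1}^1 e^{-\mathrm{Re}(zr)t}(1-t^2)^{\nu-1/2}\,dt\lesssim_{\nu} e^{\abs{x}r}.
\]
This holds because $\mathrm{Re}(i\cdot izr\,t)=-xrt$, and $\nu>-1/2$ makes the weight integrable. Also $\abs{e^{-z^2/4}}=e^{(y^2-x^2)/4}$. Inserting $\abs{f(r)}\le Ce^{-pr^2/2+\lambda w(qr)}$ gives
\[
\abs{U_{\nu}f(z)}\lesssim e^{\frac{y^2-x^2}{4}}\int_0^{\infty}e^{-\frac{1+p}{2}r^2+\abs{x}r+\lambda w(qr)}\,r^{2\nu+1}\,dr.
\]

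\textbf{Step 2 (completing the square).} We write
\[
-\frac{1+p}{2}r^2+\abs{x}r=-\frac{1+p}{2}\Bigl(r-\frac{\abs{x}}{1+p}\Bigr)^2+\frac{x^2}{2(1+p)}.
\]
Combined with $-x^2/4$, the exponent $\frac{x^2}{2(1+p)}-\frac{x^2}{4}$ equals $\frac{1-p}{1+p}\frac{x^2}{4}$, which is the required Gaussian factor.

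To handle the weight, set $r_0=\abs{x}/(1+p)$ and use property $(\alpha)$ together with monotonicity of $w$:
\[
w(qr)\le w\bigl(q\abs{r-r_0}+qr_0\bigr)\le L\bigl[w(q\abs{r-r_0})+w(qr_0)+1\bigr].
\]
Since $r_0\le\abs{x}$, monotonicity gives $w(qr_0)\le w(q\abs{x})$. This produces the factor $e^{L\lambda w(q\abs{x})}$ in front.

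\textbf{Step 3 (the remaining integral).} What is left is
\[
\int_0^\infty e^{-\frac{1+p}{2}(r-r_0)^2+L\lambda w(q\abs{r-r_0})}\,r^{2\nu+1}\,dr.
\]
This is the step I expect to be the main obstacle, because the polynomial factor $r^{2\nu+1}$ is not centred at $r_0$. I would bound $r^{2\nu+1}\lesssim (1+\abs{r-r_0})^{2\nu+1}(1+r_0)^{2\nu+1}$. The first factor is absorbed into the Gaussian, using $w(t)=o(t^2)$, which follows from the standing hypothesis $\int_x^\infty w(t)t^{-3}dt=O(w(x)/x^2)$ (giving $(\beta_2)$ and hence $w=o(t^2)$).

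The leftover factor $(1+\abs{x})^{2\nu+1}$ must be absorbed into the exponential. The cleanest way is to give up an arbitrarily small amount of Gaussian, replacing $1+p$ by $1+p-\delta$ in the square. A more careful route is to note that $e^{\abs{x}r}$ in Step 1 can be sharpened to $e^{\abs{x}r}(1+\abs{x}r)^{-\nu-1/2}$, via the standard asymptotics of $I_{\nu}$, which cancels the power. I would adopt the sharpened Bessel bound so that the stated exponent is exact, with the constant depending on $\nu,p,q,\lambda$.

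\textbf{Step 4 (second bound).} Since $H_{\nu}f$ satisfies the same decay bound as $f$, Steps 1–3 applied to $H_{\nu}f$ give
\[
\abs{U_{\nu}H_{\nu}f(\zeta)}\lesssim e^{\frac{(\mathrm{Im}\,\zeta)^2}{4}+\frac{1-p}{1+p}\frac{(\mathrm{Re}\,\zeta)^2}{4}+L\lambda w(q\abs{\mathrm{Re}\,\zeta})}.
\]
By (\ref{E:U-rotation}), $U_{\nu}f(z)=U_{\nu}H_{\nu}f(iz)$, and $iz=-y+ix$. Substituting $\zeta=iz$ swaps the roles of $x$ and $y$, which yields the second estimate.
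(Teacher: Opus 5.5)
Your proposal is correct and follows the paper's route. It bounds $J_\nu(izr)/(izr)^\nu$ via the integral representation, completes the square to produce $\frac{1-p}{1+p}\frac{x^2}{4}$, uses $(\alpha)$ and monotonicity to pull out $e^{L\lambda w(q\abs{x})}$, uses $w=o(t^2)$ for the leftover integral, and gets the second bound from (\ref{E:U-rotation}). Your extra care in Step 3 is warranted: the paper claims the leftover integral is bounded uniformly in $x$, but with the crude kernel bound $e^{\abs{x}r}$ it grows like $(1+\abs{x})^{2\nu+1}$, and your sharpened bound $e^{\abs{x}r}(1+\abs{x}r)^{-\nu-1/2}$ (from the asymptotics of $I_\nu$) cancels this and recovers the stated exponent exactly.
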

\begin{proof}
Write $C_{\nu} =  \frac{1}{2^{\nu}\Gamma(\nu+1/2)\Gamma(1/2)}$.
Then
\begin{equation*}
\begin{aligned}
\abs{\int_{0}^{\infty} f(r) \frac{J_{\nu}(izr)}{(izr)^{\nu}}
r^{2\nu +1}\, dr} 
=&\; 
\abs{\int_{0}^{\infty} f(r)C_{\nu}\int_{-1}^1 e^{-zrt} 
(1-t^2)^{\nu-1/2}\, ds\, e^{-\frac{r^2}{2}} r^{2\nu +1}\, dr}\\
\le&\; C_{\nu} \int_{-1}^1 (1-t^2)^{\nu-1/2}
\int_{0}^{\infty} e^{-xrt-\frac{r^2}{2}} 
\abs{f(r)} r^{2\nu +1}\, dr\, dt\\
\le&\; C_{\nu} \int_{-1}^1 (1-t^2)^{\nu-1/2}
\int_{0}^{\infty} e^{xr-\frac{(1+p)}{2}r^2+\lambda w(q r)} 
r^{2\nu +1}\, dr\, dt\\
=&\; \frac{q^{-2\nu-1}}{2^{\nu}\Gamma(\nu+1)} e^{\frac{x^2}{2(1+p)}}
\int_{0}^{\infty} e^{-\frac{1}{2q^2(1+p)}((1+p)r-q x)^2+\lambda w(r)} 
r^{2\nu +1}\, dr\\
\lesssim_{\nu, q}&\;
e^{\frac{x^2}{2(1+p)}+L\lambda w(q\abs{x})} 
\int_{0}^{\infty} e^{-\frac{1}{2q^2(1+p)}\abs{(1+p)r-q x}^2+L\lambda 
w(\abs{(1+p)r-q x})} r^{2\nu +1}\, dr\\
\lesssim_{\nu, p, q,\lambda}&\; e^{\frac{x^2}{2(1+p)}+
L\lambda w(q\abs{x})} ,
\end{aligned}
\end{equation*}
where in the second last step we have used the condition $(\alpha)$ on $w$,
and in the last step we have used the fact that 
$w(t) = o(t^2)(w(t) = O(t^2))$. Therefore
\begin{equation*}
\abs{U_{\nu}f(z)} \lesssim_{\nu, p, q,\lambda}
e^{\frac{y^2}{4}+\(\frac{1-p}{1+p}\)\frac{x^2}{4}+L\lambda w(q\abs{x})},
\end{equation*}
and the other bound follows from the bound on
$H_{\nu}f$ and equation (\ref{E:U-rotation}).
\end{proof}
\begin{lem}\label{L:U-imp-bounds}
If
\begin{equation*}
\abs{U_{\nu}f(z)} \lesssim_{\nu,\lambda}
e^{\frac{y^2}{4}+L\lambda w(q\abs{x})}
\quad \text{and} \quad
\abs{U_{\nu}f(z)} \lesssim_{\nu,\lambda}
e^{\frac{x^2}{4}+L\lambda w(q\abs{y})}
\end{equation*}
for all $\lambda>0$ 
then
\begin{equation*}
\abs{U_{\nu}f(z)} \lesssim_{\nu,\lambda}
e^{\lambda \abs{z}^2} 
\end{equation*}
for $z=x+iy\in\C$.
\end{lem}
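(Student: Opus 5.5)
The plan is to reduce Lemma \ref{L:U-imp-bounds} to an order-of-growth statement for the entire function $U_{\nu}f$, and then to a Phragm\'en--Lindel\"of argument on the coordinate quadrants. This is exactly the step used in the proof of Theorem \ref{T:P-coeff}, where the argument of \cite[Theorem 1.1]{Neyt.et.al.-2025} gives $\abs{Bf(z)}\le e^{\epsilon\abs{z}^2}$. First I record that $U_{\nu}f$ is entire and even, and that it has order at most $2$ and finite type. Indeed, by the Cauchy--Schwarz inequality in $F_{\nu}$ (or directly from the integral formula, since $f\in L^2(\R^+,r^{2\nu+1}dr)$), $\abs{U_{\nu}f(z)}\lesssim e^{\abs{z}^2/4}$, say. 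Since $w(t)=o(t^2)$, the hypotheses say that on the two axes $\abs{U_{\nu}f}$ grows like $e^{o(\abs{z}^2)}$. More precisely, on $y=0$ the first bound gives $e^{L\lambda w(q\abs{x})}$, and on $x=0$ the second gives $e^{L\lambda w(q\abs{y})}$.

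Next, fix $\lambda>0$ and a quadrant, say $Q=\{0\le\arg z\le\pi/2\}$. I would apply the classical Phragm\'en--Lindel\"of principle in $Q$, which has opening $\pi/2$, so it applies to functions of order $<2$, or of order $2$ and minimal type. The difficulty is that a priori we only know finite type $1/4$, so the principle cannot be applied directly. The plan is to use the mixed bounds inside $Q$. The first bound controls the region $\abs{y}\le\delta\abs{x}$ by $e^{\delta^2\abs{z}^2/4+o(\abs{z}^2)}$, and the second controls $\abs{x}\le\delta\abs{y}$ similarly. So on the two rays $\arg z=\theta_\delta$ and $\arg z=\pi/2-\theta_\delta$, with $\tan\theta_\delta=\delta$, we have $\abs{U_{\nu}f}\le C_\delta e^{(\delta^2/4+\eta)\abs{z}^2}$ for any $\eta>0$. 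On the remaining sector $\theta_\delta\le\arg z\le \pi/2-\theta_\delta$, whose opening is strictly less than $\pi/2$, the finite-type bound $e^{\abs{z}^2/4}$ allows Phragm\'en--Lindel\"of. I compare with $G(z)=\exp(\tau e^{-i\pi/4}z^2)$, rotated so that it is real on the bisector. With $\tau=(\delta^2/4+\eta)/\cos(\pi/2-2\theta_\delta)$, the function $U_\nu f/G$ is bounded on the boundary rays and of order $2$ in a sector of opening $<\pi/2$, hence bounded inside. This yields $\abs{U_{\nu}f(z)}\lesssim e^{\tau\abs{z}^2}$ in that sector, and the thin sectors near the axes are already controlled. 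Since $\tau\to0$ as $\delta,\eta\to0$, with $\cos(\pi/2-2\theta_\delta)\to1$, for each $\lambda>0$ we can choose $\delta,\eta$ with $\tau\le\lambda$. The same argument works in the other quadrants, using evenness or repeating verbatim, and gives $\abs{U_{\nu}f(z)}\lesssim_{\nu,\lambda}e^{\lambda\abs{z}^2}$.

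The main obstacle is bookkeeping the constants. The hypothesis constants depend on $\lambda$, and $w(q\abs{x})\le \eta' x^2+C_{\eta'}$ needs $w=o(t^2)$. This follows from the integral condition $\int_x^\infty w(t)t^{-3}dt=O(w(x)/x^2)$ together with monotonicity, since $w(x)/(2x^2)\le\int_x^\infty w(t)t^{-3}dt\to0$. One also has to check that the comparison function really makes the boundary values bounded on both rays; this is a direct computation of $\mathrm{Re}(e^{-i\pi/4}z^2)$ at $\arg z=\theta_\delta$ and $\arg z=\pi/2-\theta_\delta$.
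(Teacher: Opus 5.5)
Your strategy is the right one. You bound the thin sectors $0\le\arg z\le\theta_\delta$ and $\pi/2-\theta_\delta\le\arg z\le\pi/2$ directly by the mixed hypotheses, then apply Phragm\'en--Lindel\"of with a rotated Gaussian on the middle sector, whose opening is $<\pi/2$. This is the Vemuri-type sector argument the paper invokes through \cite[Theorem 1.1]{Neyt.et.al.-2025}, and it also appears in the proof of Theorem \ref{T:coeff-decay}. However, two steps are wrong as written.

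First, the comparison function must be $G(z)=\exp(-i\tau z^2)$, with $\abs{G(re^{i\phi})}=e^{\tau r^2\sin 2\phi}$. On the bisector $z^2$ has argument $\pi/2$, so the rotation factor is $e^{-i\pi/2}$, not $e^{-i\pi/4}$. With your $G$ one gets $\abs{G}=e^{\tau r^2\cos(2\phi-\pi/4)}$. This decays on the ray $\arg z=\pi/2-\theta_\delta$, where $\cos(3\pi/4-2\theta_\delta)\approx-\tfrac{\sqrt2}{2}$, so $U_\nu f/G$ is not bounded there. Your formula for $\tau$ is in fact the one belonging to $\exp(-i\tau z^2)$.

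Second, and more importantly, $\cos(\pi/2-2\theta_\delta)=\sin 2\theta_\delta=2\delta/(1+\delta^2)$ tends to $0$, not to $1$, as $\delta\to0$. Hence
\begin{equation*}
\tau=(1+\delta^2)\Bigl(\frac{\delta}{8}+\frac{\eta}{2\delta}\Bigr),
\end{equation*}
and it is false that $\tau\to0$ whenever $\delta,\eta\to0$; for instance $\eta=\delta$ gives $\tau\to\tfrac12$.

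The argument is saved only by the order of the choices. Given the target exponent $\lambda$, first fix $\delta\le\min(1,2\lambda)$, and then take $\eta\le\lambda\delta/2$. This is legitimate because $w(t)=o(t^2)$ makes the weight term at most $\eta x^2+C_\eta$ for every $\eta>0$, with the constant allowed to depend on $\eta$. Then $\tau\le\lambda$. The thin sectors are also covered, since $\delta^2/4+\eta=\tau\sin 2\theta_\delta\le\tau$.

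As a check, for bounds of the form $e^{(y^2+\mu x^2)/4}$ the optimal choice is $\delta=\sqrt{\mu}$, which gives $\tau=\sqrt{\mu}/4$. This is exactly Vemuri's angle $\theta_0=\arctan\sqrt{\mu}$ and bound used in the proof of Theorem \ref{T:coeff-decay}. With these two corrections your proof is complete.
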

Proof of this Lemma is same as the proof 
given in \cite[Theorem 1.1]{Neyt.et.al.-2025}. 
\subsection{Proof of Theorem \ref{T:w-Laguerre-decay}} 
Let $f\in E_{H_{\nu}}(p,q,w)$. First, we derive the first part.
Since $E_{H_{\nu}}(p,q,w) \subset E_{H_{\nu}}(1,q,w)$
for $p\ge 1$, thus
we have $f\in E_{H_{\nu}}(1,q,w)$.
Then, from Proposition \ref{P:U-bounds}, we get
\begin{equation*}\label{E:axes-bounds}
\abs{U_{\nu}f(x)} \lesssim_{\nu, q,\lambda} 
e^{\lambda w(q\abs{x})} \quad \text{and}\quad 
\abs{U_{\nu}f(iy)} \lesssim_{\nu, q,\lambda}
e^{\lambda w(q\abs{y})} .
\end{equation*}
Since, $w(t) = o(t^2)$, thus from Proposition \ref{P:U-bounds} 
we have 
$\abs{U_{\nu}f(z)} \lesssim_{\nu} 
e^{\frac{y^2}{4}+\lambda q^2 x^2}$ 
and 
$\abs{U_{\nu}f(z)} \lesssim_{\nu}
e^{\frac{x^2}{4}+\lambda q^2 y^2}$,
for every $\lambda>0$. Therefore, from Lemma \ref{L:U-imp-bounds},
for arbitrary small positive $\epsilon$, we get 
\begin{equation*}\label{E:growth-bound}
\abs{U_{\nu}f(z)} \lesssim_{\nu, \lambda}
e^{\epsilon \abs{z}^2}.
\end{equation*}
Now, we apply Theorem \ref{T:WPL} on $U_{\nu}$.
Thus, we obtain
\begin{equation*}
\abs{U_{\nu}f(z)} \lesssim_{\nu, q,\lambda} 
e^{\lambda w(q\abs{z})}.
\end{equation*}
If $U_{\nu}f(z) = \sum_{k\ge 0} c_k w^k$, then
\begin{equation*}
\abs{c_k} \le 
\inf_{l>1} \sup_{\abs{z}=l} \frac{\abs{U_{\nu}f(z)}}{r^k}
\lesssim_{\nu, q,\lambda} e^{k\log q-\frac{1}{l}\phi^*\(lk\)}.
\end{equation*}
Thus, from equation (\ref{E:U-exp}), we get the estimate for
 Laguerre coefficients of $f$ claimed in first part.
 
Next, we justify the second part. 
Let
\begin{equation*}
F_{\nu}(z) = e^{-\lambda w(q\abs{z})} U_{\nu}f(z).
\end{equation*}
By Proposition \ref{P:U-bounds}, and the fact that 
$w$ is a non-decreasing function, we obtain
\begin{equation*}
\abs{F_{\nu}(z)} \lesssim_{\nu,p,q}
e^{\frac{y^2}{4}+\(\frac{1-p}{1+p}\)\frac{x^2}{4}}
\quad \text{and} \quad
\abs{F_{\nu}(z)} \lesssim_{\nu,p,q}
e^{\frac{x^2}{4}+\(\frac{1-p}{1+p}\)\frac{y^2}{4}}.
\end{equation*}
Using arguments given in \cite[Theorem 2.1]{Vemuri2008hermite}
provides us 
\begin{equation*}
\abs{F_{\nu}(\abs{z}e^{i\theta})} \lesssim_{\nu,p,q} e^{
\sqrt{\frac{1-p}{1+p}}\frac{\abs{z}^2}{4} \sin2\theta},
\end{equation*}
for $\theta_0\le \theta \le \frac{\pi}{2}-\theta_0$, where 
$\theta_0 = \arctan\(\sqrt{\frac{1-p}{1+p}}\)$. 
Therefore
\begin{equation*}
\abs{U_{\nu}(z)} \lesssim_{\nu,p,q,\lambda}
e^{\sqrt{\frac{1-p}{1+p}}\frac{\abs{z}^2}{4}+\lambda w(q\abs{z})}.
\end{equation*}
As done in the previous case, we have
\begin{equation*}
\abs{c_k}
\lesssim_{\nu,p,q,\lambda} e^{k\log q -\frac{1}{l}\zeta_{p,q}^*(lk)}.
\end{equation*}
Hence, again, in view of (\ref{E:U-exp}) the result follows.
\qed
\section{Weighted Hardy class and Decay of Hermite projection operators}
\label{S:Proj-estimates}
The symplectic Fourier transform of $F\in L^1(\C^d)$ is defined as 
\begin{equation*}
\mathcal{F}_S F(z) = 
\int_{\C^d} e^{-\frac{i}{2}\Im(z\cdot \bar{\zeta})} F(\zeta)\, d\zeta.
\end{equation*}
The following lemma is required.
\begin{lem}\label{L:int-finite}
The integral
\begin{equation*}
 \int_{\R^d} e^{-a\abs{\xi+\frac{y}{2}}^2+\lambda 
\[w\(c\abs{\xi+\frac{y}{2}}\)+w\(c\abs{\xi-\frac{y}{2}}\)\]}\, d\xi
\end{equation*}
is finite.
\begin{proof}
Since, $w(t) = o(t^2)$, therefore, for every $\epsilon>0$, we have
\begin{equation*}
\begin{aligned}
\int_{\R^d} e^{-a\abs{\xi+\frac{y}{2}}^2+\lambda 
\[w\(c\abs{\xi+\frac{y}{2}}\)+w\(c\abs{\xi-\frac{y}{2}}\)\]}\, d\xi
\le &\; 
\int_{\R^d} e^{-a\abs{\xi+\frac{y}{2}}^2+c\lambda\epsilon 
(\xi^2 + \frac{y^2}{4})}\, d\xi\\
=&\; e^{\frac{c\lambda\epsilon(c\lambda\epsilon-2a)}
{4a(c\lambda\epsilon-a)}y^2} \int_{\R^d} 
e^{-(1-\frac{c\lambda\epsilon}{a})
\abs{\xi+\frac{ay}{2(a-c\lambda\epsilon)}}^2}
d\xi\\
< &\; \infty.
\end{aligned}
\end{equation*}
\end{proof}
\end{lem}
The following proposition plays a key role to prove Theorem
\ref{T:Pro-estimate}.
\begin{prop}\label{P:V-estimate}
If $f\in E^d_{\mathcal{F}}(a,c,w)$ then 
\begin{equation*}
\abs{V(f,f)(z)}\lesssim_{\lambda} e^{-\frac{a\abs{z}^2}{8}+
L\lambda w\(\frac{c\abs{z}}{2}\)},
\quad\text{and}\quad
\abs{\mathcal{F}_S V(f,f)(z)}\lesssim_{\lambda} 
e^{-\frac{a\abs{z}^2}{8}+L\lambda w\(\frac{c\abs{z}}{2}\)}.
\end{equation*}
\end{prop}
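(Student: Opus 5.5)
We bound the short-time Fourier transform directly from its definition, following the argument of Garg--Thangavelu in the unweighted case. Write $z=x+iy$. Then
\begin{equation*}
\abs{V(f,f)(z)} \le \frac{1}{(2\pi)^{d/2}}\int_{\R^d} \abs{f(\xi+y)}\,\abs{f(\xi)}\,d\xi .
\end{equation*}
First we change variables $\xi\mapsto \xi-\frac{y}{2}$, which symmetrizes the integrand as $\abs{f(\xi+\frac y2)}\abs{f(\xi-\frac y2)}$. Inserting the decay hypothesis on $f$ bounds the integrand by
\begin{equation*}
C^2 e^{-\frac a2\(\abs{\xi+\frac y2}^2+\abs{\xi-\frac y2}^2\)+\lambda\[w\(c\abs{\xi+\frac y2}\)+w\(c\abs{\xi-\frac y2}\)\]}.
\end{equation*}
By the parallelogram law, $\abs{\xi+\frac y2}^2+\abs{\xi-\frac y2}^2=2\abs{\xi}^2+\frac{\abs{y}^2}{2}$, so we can pull out the factor $e^{-a\abs y^2/4}$.

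For the weight terms we use monotonicity of $w$ and condition $(\alpha)$. Since $\abs{\xi\pm\frac y2}\le \abs\xi+\frac{\abs y}{2}$, we get $w(c\abs{\xi\pm \frac y2})\le L[w(c\abs\xi)+w(\frac{c\abs y}{2})+1]$. The $y$-dependent part then factors out as $e^{2L\lambda w(c\abs y/2)}$. What remains is $\int e^{-a\abs\xi^2+2L\lambda w(c\abs\xi)}d\xi$, which is finite because $w(t)=o(t^2)$; this is essentially the content of Lemma \ref{L:int-finite}. So far this gives $\abs{V(f,f)(x+iy)}\lesssim e^{-\frac{a\abs y^2}{4}+L'\lambda w(\frac{c\abs y}{2})}$, with $L'$ a harmless multiple of $L$ that we rename.

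Next we obtain the decay in $x$. Apply the same argument on the Fourier side: by Plancherel, $V(f,f)(x+iy)$ equals, up to a unimodular factor, $V(\hat f,\hat f)(y-ix)$ (roles of $x,y$ swapped with a sign). The $\hat f$ bound then gives $\abs{V(f,f)(z)}\lesssim e^{-\frac{a\abs x^2}{4}+L\lambda w(\frac{c\abs x}{2})}$. Taking the geometric mean of the two bounds yields
\begin{equation*}
\abs{V(f,f)(z)}\lesssim e^{-\frac a8(\abs x^2+\abs y^2)+\frac{L\lambda}{2}\[w(\frac{c\abs x}{2})+w(\frac{c\abs y}{2})\]}.
\end{equation*}
Since $w$ is non-decreasing and $\abs x,\abs y\le\abs z$, this is at most $e^{-\frac{a\abs z^2}{8}+L\lambda w(\frac{c\abs z}{2})}$, which is the first estimate.

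For the symplectic Fourier transform we use the standard identity that $\mathcal F_S V(f,g)(z)$ is, up to constants and a reflection or rotation of $z$, again $V(f,g)$ evaluated at a transformed point, e.g.\ $\mathcal F_S V(f,f)(z)=c_d\,V(f,\check f)(\pm z)$ or $V(f,f)(iz)$ depending on normalization. Both $f$ and $\check f(\xi)=f(-\xi)$ satisfy the same bounds, so the first part applies verbatim, and the radial form of the bound is invariant under these transformations. The main obstacle is fixing this identity precisely with the paper's normalizations of $V$ and $\mathcal F_S$. The fallback is to compute $\mathcal F_S V(f,f)$ directly: integrating out the Gaussian-type phase recovers a cross-ambiguity function of $f$ with its reflection, and the estimate above extends to two different functions $f,g$ without change.
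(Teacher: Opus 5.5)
Your proposal is correct and follows the paper's proof essentially step for step. The steps match: the direct bound in $y$; the relation $V(f,f)(z)=V(\hat f,\hat f)(-iz)$ for the bound in $x$ (with these normalizations it holds exactly, with no phase factor); the geometric mean of the two; and, for the second estimate, the identity $\mathcal{F}_S V(f,f)(z)=c_d\,V(f,\tilde f)(z)$ with $\tilde f(x)=f(-x)$, which is precisely your first candidate. Your handling of the weight is actually cleaner than the paper's: applying $(\alpha)$ to both factors after the symmetric shift leaves the integral $\int e^{-a\abs{\xi}^2+2L\lambda w(c\abs{\xi})}\,d\xi$ independent of $y$, whereas the bound in Lemma \ref{L:int-finite}, as written, still depends on $y$; the price is $2L$ in place of $L$, which is harmless since $(\alpha)$ also holds with $2L$.
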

\begin{proof}
Write $z=x+iy$. We compute by the sub-additive property of the weight function and above Lemma that
\begin{equation*}
\begin{aligned}
\abs{V(f,f)(z)} \le &\;
\int_{\R^d} \abs{f(\xi)}\abs{f(\xi+y)} \, d\xi\\
\le&\; \int_{\R^d} e^{-\frac{a\abs{\xi+y}^2}{2}+
\lambda w(c\abs{\xi+y})} e^{-\frac{a\abs{\xi}^2}{2}+
\lambda w(c\abs{\xi})}\, d\xi\\
=&\; e^{-\frac{a\abs{y}^2}{4}}
\int_{\R^d} e^{-a\abs{\xi+\frac{y}{2}}^2+
\lambda \(w(c\abs{\xi+y})+w(c\abs{\xi})\)}\, d\xi\\
\le&\; e^{-\frac{a\abs{y}^2}{4}+L\lambda w\(\frac{c\abs{y}}{2}\)}
\int_{\R^d} e^{-a\abs{\xi+\frac{y}{2}}^2+
\lambda \[w\(c\abs{\xi+\frac{y}{2}}\)+w\(c\abs{\xi-\frac{y}{2}}\)\]}
\, d\xi\\
\lesssim_{\lambda}&\; e^{-\frac{a\abs{y}^2}{4}+L\lambda 
w\(\frac{c\abs{y}}{2}\)}.
\end{aligned}
\end{equation*}
By the above estimate and relation 
$V(f,f)(z) = V(\hat{f}, \hat{f})(-iz)$, we obtain
\begin{equation*}
\abs{V(f,f)(z)} \lesssim_{\lambda} e^{-\frac{a\abs{x}^2}{4}+L\lambda 
w\(\frac{c\abs{x}}{2}\)}
\end{equation*}
Above two estimates together gives us
\begin{equation*}
\abs{V(f,f)(z)}\lesssim_{\lambda} e^{-\frac{a\abs{z}^2}{8}+
L\lambda w\(\frac{c\abs{z}}{2}\)}.
\end{equation*}
For $\tilde{f}(x) = f(-x)$, the relation 
$\mathcal{F}_S V(f,f)(z) = (4\pi)^{d/2}V(f,\tilde{f})(z)$ combined 
with the last estimate provides the second desired estimate for 
$V(f,f)(z)$. 
\end{proof}
\subsection{Proof of Theorem \ref{T:Pro-estimate}}
We are interested in estimating the integral
\begin{equation*}
 \int_{\C^d} V(f,f)(z) \phi_k^{d-1}(z)\, dz =
 2^d \int_0^{\infty} F(\sqrt{2} r) \psi_k^{d-1} (r) r^{2d-1} dr,
\end{equation*}
where
\begin{equation*}
F(r) = \int_{S^{2d-1}} V(f,f)(r\omega) \, d\omega.
\end{equation*}
Assume $G(r) = F(\sqrt{2} r)$. From Proposition \ref{P:V-estimate},
we get
\begin{equation*}
\abs{G(r)}\lesssim_{\lambda} e^{-\frac{a r^2}{4}+
L\lambda w\(\frac{cr}{\sqrt{2}}\)}.
\end{equation*}
The following relation is derived in  \cite[Theorem 4.3]{Garg2009}.
\begin{equation*}
H_{d-1}G(r) = \frac{1}{2^d} \int_{S^{2d-1}} 
\mathcal{F}_S V(f,f)(\sqrt{2} r\omega) \, d\omega .
\end{equation*}
Then, again by Proposition \ref{P:V-estimate},
we get
\begin{equation*}
\abs{H_{d-1}G(r)} \lesssim_{\lambda, d} e^{-\frac{a r^2}{4}+
L\lambda w\(\frac{c r}{\sqrt{2}}\)}.
\end{equation*}
Therefore, $G\in E_{H_{\nu}}(\frac{a}{2},\frac{c}{\sqrt{2}},w)$.
Hence, when $a\ge 2$, from the first part of Theorem 
\ref{T:w-Laguerre-decay}, we obtain
\begin{equation*}
\abs{ \int_{\C^d} V(f,f)(z) \phi_k^{d-1}(z)\, dz}
\lesssim_{d,\lambda} 2^{2k} \(k+d-1\)! \, 
 e^{2k\log \frac{q}{\sqrt{2}}-\frac{1}{l}\phi^*\(2lk\)},
\end{equation*}
and when $a<2$, from the second part of Theorem 
\ref{T:w-Laguerre-decay}, we get
\begin{equation*}
\abs{ \int_{\C^d} V(f,f)(z) \phi_k^{d-1}(z)\, dz}
\lesssim_{d,\lambda} 2^{2k} \(k+d-1\)! \, 
e^{2k\log \frac{q}{\sqrt{2}}-\frac{1}{l}\psi_{a,c}^*(2lk)}. 
\end{equation*}
Now, appealing to the relation (\ref{E:L-coeff}) gives us the 
desired results. 

\qed


The proof of Theorem \ref{T:O(k)-finite} goes parallelly
with the proof given in \cite[Theorem 1.4]{Garg2009}, 
which uses an interesting object, namely, 
vector-valued Bargmann transform, and several other formulas. 
Therefore, we highlight the proof,
and the details can be seen in \cite{Garg2009}.
\subsection{Proof of Theorem \ref{T:O(k)-finite}.}
For $f\in L^2(\R^d)$, the vector-valued Bargmann transform is defined as 
\begin{equation*}
Bf(z,e) = \pi^{-d/2} e^{-\frac{z^2}{4}} \int_{\R^d} 
 e^{-\frac{\abs{x}^2}{2}+zx\cdot e} f(x) dx, 
\end{equation*}
where $z\in\C$, and $e\in S^{d-1}$.
Let 
\begin{equation*}
Af(s) = \(\int_{S^{d-1}} \abs{f(s\eta)}^2 d\eta\)^{1/2}, 
\end{equation*}
for $s>0$.
Observe that $f\in E^d_{\mathcal{F}}(a,c,\lambda,w)$
implies that 
\begin{equation}\label{E:A-estimates}
Af(s) \le
e^{-\frac{a}{2}s^2+\lambda w(c s)}, \quad \text{and} \quad
A\hat{f}(s) \le
e^{-\frac{a}{2}s^2+\lambda w(c s)}.
\end{equation}
Next, under the above conditions, we derive suitable conditions on
$Bf(z,e)$.
We can write 
\begin{equation*}
Bf(z,e) = \pi^{-d/2} e^{-\frac{z^2}{4}} \int_{0}^{\infty} 
 e^{-\frac{s^2}{2}} T_{sz}f(e)  s^{d-1} ds,
\end{equation*}
where
\begin{equation*}
T_{sz}f(e) = \int_{S^{d-1}} f(s\eta) e^{sz\eta\cdot e} d\eta.
\end{equation*}
Write $z=x+iy$, then
\begin{equation*}
\begin{aligned}
\(\int_{S^{d-1}} \abs{T_{sz}f(e)}^2 de\)^{1/2} 
\le&\; Af(s) \int_{S^{d-1}} e^{sx\eta\cdot e} d\eta\\
\le&\; e^{-\frac{a}{2}s^2+\lambda w(c s)}
\frac{J_{d/2-1}(isx)}{(isx)^{d/2-1}}.
\end{aligned}
\end{equation*}
Hence, by the Minkowski's inequality, and the last estimate, we get
\begin{equation*}
\begin{aligned}
\(\int_{S^{d-1}} \abs{Bf(z,e)}^2 de\)^{1/2}
\le&\; e^{\frac{v^2-u^2}{4}} 
\int_{0}^{\infty} \(\int_{S^{d-1}} \abs{T_{sz}f(e)}^2 de\)^{1/2} 
 e^{-\frac{s^2}{2}} s^{d-1} ds\\
 \le&\; e^{\frac{v^2-u^2}{4}} 
\int_{0}^{\infty} \int_{S^{d-1}}
 e^{-\frac{(1+a)}{2}s^2+\lambda w(c s)}
\frac{J_{d/2-1}(isx)}{(isx)^{d/2-1}} s^{d-1} ds.
\end{aligned}
\end{equation*}
By the similar computation done in the proof of 
Proposition \ref{P:U-bounds}, and the 
fact that $Bf(z,e) = B\hat{f}(-iz,e)$, we obtain
\begin{equation*}
\(\int_{S^{d-1}} \abs{Bf(z,e)}^2 de\)^{1/2}
\lesssim_{\nu,p,q}
e^{\frac{y^2}{4}+\(\frac{1-a}{1+a}\)\frac{x^2}{4}+
L\lambda w(c\abs{x})}
\quad \text{and}
\end{equation*}
\begin{equation*}
\(\int_{S^{d-1}} \abs{Bf(z,e)}^2 de\)^{1/2}
\lesssim_{\nu,p,q}
e^{\frac{x^2}{4}+\(\frac{1-a}{1+a}\)\frac{y^2}{4}+
L\lambda w(c\abs{y})}
\end{equation*}
For $g\in L^2(S^{d-1})$, the function $F_g(z) = \int_{S^{d-1}}
Bf(z,e) g(e) de$ satisfies the above two estimates as well. 
Then, writing $Bf(z,e) = \sum_{k=0}^{\infty} d_k(e) z^k$, and
following the arguments given in the proof of 
Theorem \ref{T:w-Laguerre-decay} provides us that
\begin{equation*}
\int_{S^{d-1}} \abs{d_k(e)}^2 de  \le 
 e^{k\log q-\frac{1}{l}\phi^*\(lk\)},
\end{equation*}
for $a\ge 1$, and 
\begin{equation*}
\int_{S^{d-1}} \abs{d_k(e)}^2 de  \le 
e^{k\log q -\frac{1}{l}\zeta_{p,q}^*(lk)},
\end{equation*}
for $0<a< 1$. 
Observe that the function
$$(Tf)(re) = \sum_{m=0}^{\infty} 2^{-m/2} 
\sum_{j=1}^{d_m} f_{mj}(r) Y_{mj}(e)$$ 
satisfies the estimates in equation (\ref{E:A-estimates}). 
Finally, using the relation between $\norm{P_k(Tf)}_2$ and 
$\int_{S^{d-1}} \abs{d_k(e)}^2 de$ from \cite{Garg2009} 
gives us the desired result. 
\qed

\medskip
\noindent{\bf Acknowledgements.} 
This work is a part of first author's Ph. D. thesis. He thanks the National Institute of Science Education and Research Bhubaneswar, India, for providing an excellent research facility. This work started at NISER Bhubaneswar, India, during the second author's postdoctoral tenure. The second author is thankful to the the National Institute of Science Education and Research Bhubaneswar for the postdoctoral fellowship.
RM acknowledges the partial support provided by the research grants (ANRF/ARGM/2025/000750/MTR).

\bibliographystyle{amsplain}
\bibliography{v0-herdf}

\end{document}